\documentclass[12pt]{article}
\usepackage{amssymb}
\usepackage{amsfonts}
\usepackage{amsmath}
\usepackage[usenames]{color}
\usepackage{mathrsfs}
\usepackage{amsfonts}
\usepackage{amssymb,amsmath}
\usepackage{cite}
\usepackage{cases}
\usepackage{amsthm}

\usepackage[
    colorlinks=true,
    linkcolor=blue,
    citecolor=blue,
    urlcolor=blue,
    filecolor=blue,
    final,
    backref=page,
    hyperindex
]{hyperref}

\def\cl{\centerline}

\def\vs{\vspace*}

\def\Z{\mathbb{Z}}

\def\A{\mathcal{A}}

\def\C{\mathbb{C}}

\def\ni{\noindent}

\numberwithin{equation}{section}
\newtheorem{theo}{Theorem}[section]
\newtheorem{defi}[theo]{Definition}
\newtheorem{coro}[theo]{Corollary}
\newtheorem{lemm}[theo]{Lemma}
\newtheorem{exam}[theo]{Example}
\newtheorem{prop}[theo]{Proposition}
\newtheorem{clai}{Claim}
\newtheorem{case}{Case}

\newtheorem{rema}[theo]{Remark}

\newtheorem{remark}[theo]{Remark}

\begin{document}
\begin{center}
\cl{\large\bf \vs{8pt}Classification of Simple Harish-Chandra Modules over }
\cl{\large\bf \vs{8pt}the Loop Mirror Heisenberg–Virasoro Algebra}

\cl{ Haibo Chen$^*$, Xiansheng Dai, and Yucai Su}
\end{center}

\footnote {
$^*$Corresponding author: H. Chen (hypo1025@jmu.edu.cn).
}

{\small
\parskip .005 truein
\baselineskip 3pt \lineskip 3pt

\noindent{{\bf Abstract:}
The loop mirror Heisenberg-Virasoro algebra, an embedded subalgebra of the loop Heisenberg-Virasoro algebra, admits a family of interesting truncated subalgebras including those of Takiff type and  \(\mathfrak{bms}_3\) type. We give  a complete classification of  simple Harish-Chandra modules over the loop mirror Heisenberg-Virasoro algebra, whose simple modules fall into three categories: highest weight modules, lowest weight modules, and  evaluation modules of the 
intermediate series.
 As a by-product, we classify all simple Harish-Chandra modules over the truncated mirror Heisenberg–Virasoro algebras \(\mathcal{L}(n)\) for \(n\geq2\). By virtue of shift operators in the \(d\)-parameter family, we give a more streamlined proof of Theorem 3.3 from the work [Classification of simple $W_n$-modules with finite-dimensional
weight spaces,   {\it J. Reine Angew. Math.}, {\bf  720} (2016), 199-216] by Y. Billig and V. Futorny, which  states the key Billig-Futorny identity.   Furthermore, our approach can be extended to the computation of annihilators for uniformly bounded modules over some other Lie (super)algebras.
\vs{5pt}

\ni{\bf Key words:}
loop mirror Heisenberg–Virasoro algebra,   Harish-Chandra  module, weight module, shift operator.}

\ni{\it Mathematics Subject Classification (2020):} 17B10, 17B65,  17B68.}
\parskip .001 truein\baselineskip 6pt \lineskip 6pt

\tableofcontents

\section{Introduction}
In the representation theory of infinite-dimensional Lie algebras, Harish‑Chandra modules form a prominent class of weight modules, namely, weight modules with finite‑dimensional weight spaces. The classification of simple Harish-Chandra modules over the Virasoro algebra, originally conjectured by Kac (see \cite{K}), was completed in \cite{M2}. Combining results from \cite{MP} and \cite{S0}, a new approach was developed to recover this classification.

Furthermore, many generalizations of loop Virasoro algebras have been investigated in the literature. Representative examples include, but are not limited to: map (super)algebras associated with the Virasoro algebra (see \cite{CLW}), the map Virasoro algebra (see \cite{SA}), the loop Virasoro algebra (see \cite{GLZ1}), and the loop Neveu-Schwarz algebra (see \cite{WLP}), among others.

To classify all simple Harish‑Chandra modules over the Lie algebra $W_n$ of vector fields on the $n$-dimensional torus, Billig and Futorny introduced a powerful tool called {\it $\mathcal A$-cover theory} in \cite{B1,BF1,BF2}. Their results generalize Mathieu’s classification theorem for the Virasoro algebra. Since then, $\mathcal A$-cover theory has been applied to a variety of other Lie (super)algebras; see, e.g., \cite{BK,BF2,C,CLW,XL,BFIK,DCL,LG}.

The paper is organized as follows. In Section~2, we introduce some notation and definitions related to the loop mirror Heisenberg--Virasoro algebra and Harish--Chandra modules. We also recall several known classification theorems for related Lie algebras for later use.

In Section~3, we establish in Theorem~\ref{thm:3.16} a classification of uniformly bounded simple modules over the loop mirror Heisenberg--Virasoro algebra. In particular, using shift operators within the \(d\)-parameter family, we provide a simpler proof of the Billig–Futorny identity in Theorem~\ref{thm:3.9}. This approach can also be applied to compute annihilators of uniformly bounded modules over certain other Lie (super)algebras. We will continue to employ this method to address related problems in follow up work.

Finally, we present in Theorem~\ref{thm:4.6} a classification of simple Harish--Chandra modules over the loop mirror Heisenberg--Virasoro algebra.

Throughout this paper, we write $\mathbb{C}$, $\mathbb{Z}$, $\mathbb{N}$ and $\mathbb{Z}_+$ for the sets of complex numbers, integers, nonnegative integers and positive integers, respectively.
All vector spaces, modules and Lie algebras are defined over $\mathbb{C}$. In addition, all simple modules under consideration are nontrivial.
For any Lie algebra $\mathfrak{g}$, we let $U(\mathfrak{g})$ stand for its universal enveloping algebra.

\section{Preliminaries}
In this section, we define the loop mirror Heisenberg-Virasoro algebra and recall several fundamental notations that will be used throughout the paper.

\subsection{Loop mirror Heisenberg–Virasoro algebra} 

We begin by introducing the ‌loop mirror Heisenberg–Virasoro algebra‌, a natural infinite-dimensional extension constructed from the classical mirror Heisenberg–Virasoro structure.

\begin{defi} 
The loop mirror Heisenberg–Virasoro algebra  $\mathcal{L}$ is defined as the tensor product of the mirror Heisenberg–Virasoro algebra $\hat{\mathcal{L}}$ and the Laurent polynomial algebra $\mathbb{C}[t^{\pm1}]$, namely, $\mathcal{L}=\hat{\mathcal{L}}\otimes\mathbb{C}[t^{\pm1}]$. This Lie algebra admits a canonical linear basis $\big\{L_{m,i}:=L_{m}\otimes t^i,H_{r,i}:=H_{r}\otimes t^i,C_{k,i}=C_k\otimes t^i\mid m,i\in\mathbb{Z},r\in\mathbb{Z}+\frac{1}{2},k=1,2 \big\}$,  whose Lie bracket relations are explicitly specified as follows:
\begin{eqnarray*}
&&[L_{m,i},L_{n,j}]=(m-n)L_{m + n,i + j}+\delta_{m + n,0}\frac{m^{3}-m}{12}C_{1,i+j},
\\&&
[L_{m,i}, H_{r,j}] = -rH_{m+r,i+j},
 \\&& 
[H_{r,i}, H_{s,j}] = r\delta_{r+s,0}C_{2,i+j}, 
\\&& 
[C_{k,i}, \mathcal{L}] =0,
\end{eqnarray*}
where $ m, n \in \mathbb{Z}, r, s \in \frac{1}{2}+\mathbb{Z},k=1,2.$
 \end{defi}

 Clearly,
$\mathcal{L}$ contains a distinguished subalgebra isomorphic to the   mirror Heisenberg–Virasoro algebra, which corresponds precisely to the trivial-loop sector 
 $\hat{\mathcal{L}}\otimes1$. To streamline subsequent notation and avoid redundant expressions, we adopt the conventional abbreviation
$L_{m}:=L_{m}\otimes1$, $H_r:=H_r\otimes1$ and $C_k:=C_k\otimes1$ for any $m\in\mathbb{Z},r\in\mathbb{Z}+\frac{1}{2}$ and $k=1,2$.
We remark that 
the loop mirror Heisenberg--Virasoro algebra $\mathcal{L}$ can be realized as a subalgebra of the loop Heisenberg--Virasoro algebra $\mathcal{G}$ via the homomorphism $\phi\colon \mathcal{L}\to\mathcal{G}$ defined by
\[
L_{m,i}\mapsto \frac12 L_{2m,i}+\frac{1}{16}\delta_{m,0}C_{1,i},\ 
H_{r,i}\mapsto H_{2r,i},\ 
C_{1,i}\mapsto 2C_{1,i},\ 
C_{2,i}\mapsto 2C_{2,i},
\]
where $m,i\in\mathbb{Z}$, $r\in\mathbb{Z}+\frac12$.
Let $\psi\colon \mathcal{L}\to\mathcal{L}$ be a linear map defined by
\[
\psi(L_{m,i})=-L_{-m,i},\ 
\psi(H_{r,i})=-H_{-r,i},\ 
\psi(C_{1,i})=-C_{1,i},\ 
\psi(C_{2,i})=-C_{2,i},
\]
where $m,i\in\mathbb{Z}$, $r\in\mathbb{Z}+\frac12$. Then $\psi$ is an automorphism of $\mathcal{L}$.

Note that $\mathcal{L}$ is the semidirect product of the loop Virasoro subalgebra
$\mathcal{V}=\operatorname{span}\{L_{m,i},C_{1,i}\mid m,i\in\mathbb{Z}\}$
and the loop  Heisenberg subalgebra
$\mathcal{I}=\operatorname{span}\{H_{r,i},C_{2,i}\mid r\in\mathbb{Z}+\tfrac12,\,i\in\mathbb{Z}\}$.
Throughout this paper, we regard $\mathcal{V}$ and $\mathcal{I}$ as subalgebras of $\mathcal{L}$.
It is clear that $\hat{\mathcal{V}}=\operatorname{span}\{L_{m}\otimes 1,\,C_{1}\otimes 1 \mid m\in\mathbb{Z}\}$ is isomorphic to the classical Virasoro algebra.
The Lie algebra $\mathcal{L}$ carries a natural $\tfrac12\mathbb{Z}$‑grading
\[
\mathcal{L}=\bigoplus_{p\in\frac12\mathbb{Z}}\mathcal{L}_p,
\]
where
\[
\mathcal{L}_p=
\begin{cases}
\bigl(L_p\otimes\mathbb{C}[t^{\pm1}]\bigr)\oplus\delta_{p,0}\bigl(C_1\otimes\mathbb{C}[t^{\pm1}]\bigr)
\oplus\delta_{p,0}\bigl(C_2\otimes\mathbb{C}[t^{\pm1}]\bigr), & \text{if }p\in\mathbb{Z};\\[4pt]
H_p\otimes\mathbb{C}[t^{\pm1}], & \text{if }p\in\mathbb{Z}+\tfrac12.
\end{cases}
\]
This grading is given by the eigenvalues of $\operatorname{ad}(L_0)$.
Consequently, $\mathcal{L}$ admits a triangular decomposition
$\mathcal{L}=\mathcal{L}_+\oplus\mathcal{L}_0\oplus\mathcal{L}_-$, where
\[
\mathcal{L}_\pm=\bigoplus_{m\in\mathbb{Z}_+}\bigl(L_{\pm m}\otimes\mathbb{C}[t^{\pm1}]\bigr)
\oplus\bigoplus_{r\in\mathbb{N}+\frac12}\bigl(H_{\pm r}\otimes\mathbb{C}[t^{\pm1}]\bigr),
\]
\[
\mathcal{L}_0=\bigl(L_0\otimes\mathbb{C}[t^{\pm1}]\bigr)\oplus\bigl(C_1\otimes\mathbb{C}[t^{\pm1}]\bigr)
\oplus\bigl(C_2\otimes\mathbb{C}[t^{\pm1}]\bigr).
\]
Clearly, $\hat{\mathcal{L}}$ has a similar triangular decomposition.

For any $n\in\mathbb{Z}_+$, we define the truncated mirror Heisenberg-Virasoro algebra
\[
\mathcal{L}(n):=\hat{\mathcal{L}}\otimes\bigl(\mathbb{C}[t]/t^n\mathbb{C}[t]\bigr).
\]

The following proof is attributed to Professor Xiangqian Guo.

\begin{prop}\label{lemm:2.2} For $n\in\mathbb{Z}_+$,
the truncated mirror Heisenberg-Virasoro algebra satisfies
\[
\mathcal{L}(n)\cong \hat{\mathcal{L}}\otimes \bigl(\mathbb{C}[t^{\pm1}]/t^n\mathbb{C}[t^{\pm1}]\bigr).
\]
\end{prop}

\begin{proof}
One readily verifies that
\[
\frac{\mathbb{C}[t]}{t^n\mathbb{C}[t]} \cong \frac{\mathbb{C}[t+1]}{(t+1)^n\mathbb{C}[t+1]}
= \frac{\mathbb{C}[t]}{(t+1)^n\mathbb{C}[t]}
= \frac{\mathbb{C}[t]}{(t+1)^n\mathbb{C}[t^{\pm1}]\cap \mathbb{C}[t]}.
\]
Applying the second isomorphism theorem for rings yields
\[
\frac{\mathbb{C}[t]}{(t+1)^n\mathbb{C}[t^{\pm1}]\cap \mathbb{C}[t]}
\cong \frac{\mathbb{C}[t^{\pm1}]}{(t+1)^n\mathbb{C}[t^{\pm1}]}\stackrel{\text{s=t+1}}{=}\frac{\mathbb{C}[s^{\pm1}]}{s^n\mathbb{C}[s^{\pm1}]}.
\]
The proposition holds.
\end{proof}

The truncated mirror Heisenberg–Virasoro algebras $\mathcal{L}(2)$ and $\mathcal{L}(3)$ are respectively called the Takiff mirror Heisenberg–Virasoro algebra (also called mirror BMS-Kac-Moody algebra) and the $\mathfrak{bms}_3$ mirror Heisenberg–Virasoro algebra. $\mathcal{L}_{0}$ is an infinite dimensional abelian subalgebra of $\mathcal{L}$, and $\mathcal{C}={\rm span}\big\{C_k \otimes t^i\mid k=1,2,i\in\mathbb{Z}\big\}$ is the center of $\mathcal{L}$. We denote $\bar{\mathcal{L}}=\mathcal{L}/\mathcal{C}$. The loop mirror Heisenberg–Virasoro algebra $\mathcal{L}$ is not a pre-exp-polynomial algebra as defined in \cite[Definition 1.5]{BGLZ}, since Condition (P1) is not satisfied. However, it still possesses properties similar to those of pre-exp-polynomial algebras.

\subsection{Weight  module}

Let $\psi: \mathcal{L}_{0} \to \mathbb{C}$ be a linear function. We recall that a  highest weight vector in a $U(\mathcal{L})$-module $V$ is a vector $v \in V$ satisfying
$$xv=\psi(x)v,\quad \mathcal{L}_{+}v=0$$
for all $x \in \mathcal{L}_{0}$. A   $U(\mathcal{L})$-module  is referred to as a {\it highest weight module} if it is cyclically generated by a highest weight vector.

Now let $\psi\in\operatorname{Hom}(\mathcal{L}_0,\mathbb{C})$ be a one‑dimensional representation of the subalgebra $\mathcal{L}_0$.
We extend the underlying vector space $\mathbb{C}_\psi$ to a module over $\mathcal{L}_0\oplus\mathcal{L}_+$ by letting $\mathcal{L}_+$ act trivially on it.
The associated {\it Verma module} corresponding to $\psi$ is defined as
\[
F(\psi):=U(\mathcal{L})\otimes_{U(\mathcal{L}_0\oplus\mathcal{L}_+)}\mathbb{C}_\psi.
\]
It is a highest weight module with highest weight $\psi(L_0)$, and we have the weight space decomposition
$F(\psi)=\bigoplus_{p\in\frac12\mathbb{N}} F(\psi)_{\psi(L_0)-p}$.
Define
\[
\tilde{v}_\psi:=1\otimes 1_\psi,
\]
where $1_\psi$ stands for the unit element of $\mathbb{C}_\psi$.
Hence $\tilde{v}_\psi$ is a highest  weight vector of $F(\psi)$.
Observe that $F(\psi)\cong U(\mathcal{L}_-)$ as $U(\mathcal{L}_-)$‑modules.

For $\psi\in\operatorname{Hom}(\mathcal{L}_0,\mathbb{C})$, let $J(\psi)$ denote the unique maximal proper submodule of $F(\psi)$.
Then
\[
V(\psi):=F(\psi)/J(\psi)
\]
is the simple highest weight module associated with $\psi$.
It is a highest weight module of highest weight $\psi(L_0)$, with weight decomposition
$V(\psi)=\bigoplus_{p\in\frac12\mathbb{N}} V(\psi)_{\psi(L_0)-p}$.
We write $v_\psi$ for the image of $\tilde{v}_\psi$ in $V(\psi)$.

 For the mirror Heisenberg–Virasoro algebra $\hat{\mathcal{L}}$, one may similarly define the Verma module over $\hat{\mathcal{L}}$ by
\[
F(c_1,c_2,h):=U(\hat{\mathcal{L}})\otimes_{U(\hat{\mathcal{L}}_+\oplus\hat{\mathcal{L}}_0)}\mathbb{C}v_0,
\]
where $\psi(C_k)=c_k$ and $\psi(L_0)=h$, with $\hat{\mathcal{L}}_+=\hat{\mathcal{L}}\cap\mathcal{L}_+$ and $\hat{\mathcal{L}}_0=\hat{\mathcal{L}}\cap\mathcal{L}_0$.
In the same fashion, we can  define the simple highest weight module $V(c_1,c_2,h)$.

 Consider a nontrivial module $F$ over $\mathcal{L}$. The action of each central element $C_{k,i}$ on $F$ is a scalar $c_{k,i}$ for $k=1,2$, $i\in\mathbb{Z}$.
The module $F$ is said to be \emph{trivial} if the whole algebra acts trivially on $F$.
Denote
\[
F_\lambda=\bigl\{v\in F \mid L_0 v=\lambda v\bigr\},
\]
which is called the \emph{weight space} of weight $\lambda\in\mathbb{C}$. We say that $F$ is a \emph{weight module} if $F=\bigoplus_{\lambda\in\mathbb{C}}F_{\lambda}$. Set
\[
\mathrm{Supp}(F)=\bigl\{\lambda \mid F_{\lambda}\neq 0\bigr\},
\]
which is called the \emph{support} (or \emph{weight set}) of $F$.
An indecomposable weight module $F$ whose all weight spaces are one‑dimensional is called an \emph{intermediate series module}.

\begin{defi}
Let $F$ be a weight module over $\mathcal{L}$.
\begin{itemize}
\item[{\rm (1)}]
If $\dim(F_\lambda)<+\infty$ for all $\lambda\in\mathrm{Supp}(F)$, then $F$ is called a Harish‑Chandra module.

\item[{\rm (2)}]
If there exists some $K\in\mathbb{Z}_+$ such that $\dim(F_\lambda)<K$ for all $\lambda\in\mathrm{Supp}(F)$, then $F$ is called  a uniformly bounded module {\rm (}or called a cuspidal module{\rm )}.
\end{itemize}
\end{defi}

\subsection{Some known results and evaluation modules}
This subsection recalls several known results that will be applied in the corresponding study of the loop mirror Heisenberg–Virasoro algebra $\mathcal{L}$.

As observed in \cite{LPXZ}, $F_{a,b,c}$ is an intermediate series module over $\hat{\mathcal{L}}$ for some $a,b,c\in\mathbb{C}$, defined by
\[
F_{a,b,c}:=\sum_{i\in\frac12\mathbb{Z}}\mathbb{C}v_i
\]
with
\begin{equation}\label{eq:2.1}
L_m v_k=(a+mb-k)v_{m+k},\ H_r v_n=v_{n+r},\ H_r v_s=c v_{r+s},\ C_j v_k=0,
\end{equation}
where $m,n\in\mathbb{Z}, r,s\in\mathbb{Z}+\frac12, k\in\frac12\mathbb{Z}, j=1,2$. If $a,b\in\mathbb{C}$ and $c\in\mathbb{C}^*$, then the $\hat{\mathcal{L}}$-module $F_{a,b,c}$ is simple.  
Note that $\sum_{m\in\mathbb{Z}}\mathbb{C}v_m$ is an intermediate series $\hat{\mathcal{V}}$-module, denoted by $F_{a,b}$. By setting $\mathcal{I}F_{a,b}=0$, we obtain an $\hat{\mathcal{L}}$-module structure on $F_{a,b}$. The $\hat{\mathcal{L}}$-module $F_{a,b}$ is simple if and only if $a\notin\mathbb{Z}$ or $b\neq 0,-1$. Whenever $F_{a,b}$ is not simple as an $\hat{\mathcal{L}}$-module, it has a unique nontrivial subquotient. We denote by $\bar{F}_{a,b}$ the corresponding simple $\hat{\mathcal{L}}$-module or simple quotient module, following the notation used for $\hat{\mathcal{V}}$.

\begin{theo}\label{thm:2.4}{\rm (see \cite{LPXZ})}
Let $V$ be a uniformly bounded  simple module over the mirror Heisenberg–Virasoro algebra $\hat{\mathcal{L}}$. Then $V\cong F_{a,b,c}$ or $V\cong \bar{F}_{a,b}$ for some $a,b\in\mathbb{C}$ and $c\in\mathbb{C}^*$.
\end{theo}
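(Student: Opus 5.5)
Theorem~\ref{thm:2.4} is quoted from \cite{LPXZ}; here is how I would reprove it. I would first reduce to the Virasoro subalgebra and then pin down the Heisenberg action on the resulting intermediate series structure.

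Let $V$ be a uniformly bounded simple $\hat{\mathcal{L}}$-module. Since $V$ is simple, its support lies in one coset $\lambda+\frac12\mathbb{Z}$. Write $V=V^{0}\oplus V^{1}$, where $V^{0}=\bigoplus_{k\in\mathbb{Z}}V_{\lambda+k}$ and $V^{1}=\bigoplus_{k\in\mathbb{Z}+\frac12}V_{\lambda+k}$; these are $\hat{\mathcal{V}}$-submodules.

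\textbf{Step 1: central charges.} I would show $C_1$ and $C_2$ act as zero. For $C_1$ this is the standard fact that a uniformly bounded Virasoro module has central charge $0$. For $C_2$, the elements $H_r,H_{-r}$ with $r\in\mathbb{N}+\frac12$ span, together with $C_2$, Heisenberg algebras. If $c_2\neq0$, a Heisenberg-type argument shows weight multiplicities grow: use $[H_r,H_{-r}]=rc_2$ on a finite-dimensional weight space $V_\mu$, or a Fock-space growth argument applied to $V_{\mu+N}$ for $N\gg0$. This contradicts uniform boundedness.

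\textbf{Step 2: the Virasoro structure.} Each $V^{\epsilon}$ is a uniformly bounded $\hat{\mathcal{V}}$-module. By Mathieu's classification and the known structure of cuspidal Virasoro modules, each of its composition factors is an intermediate series module $\bar F_{a,b}$.

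\textbf{Step 3: the Heisenberg action.} Because $c_2=0$, the $H_r$ generate an abelian ideal $\mathcal{I}$, and $[L_m,H_r]=-rH_{m+r}$ means $\mathcal{I}$ behaves like a density module of weight $-\frac12$ type. There are two cases. If $\mathcal{I}V=0$, then $V$ is a simple $\hat{\mathcal{V}}$-module, hence $V\cong\bar F_{a,b}$. Otherwise, $\mathcal{I}V$ and the annihilator $\{v:\mathcal{I}v=0\}$ are submodules. By simplicity, $\mathcal{I}V=V$ and no nonzero vector is killed by all of $\mathcal{I}$. Then $H_r:V^{0}\to V^{1}$ and $H_r:V^{1}\to V^{0}$, so $H_rH_s$ commutes with everything in $\mathcal{I}$, and $\sum_{r+s=m}$-type combinations give operators $V^\epsilon\to V^\epsilon$. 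The key point is to show each weight space is one-dimensional. I would use the annihilator technique: consider the operators $\Omega_{r,s}=H_rH_s-H_{r+1}H_{s-1}$ on weight spaces and prove that a suitable polynomial identity in the $H$'s holds on $V$. Combining this with the $\hat{\mathcal{V}}$-module structure, this forces $H_rH_s$ to depend only on $r+s$ up to a scalar. From that one deduces that $V^{0}$ and $V^{1}$ are irreducible intermediate series modules linked by invertible $H_r$. After normalizing a basis so that $H_rv_n=v_{n+r}$ for integer-indexed $v_n$, one gets $H_rv_s=cv_{r+s}$ with $c\neq0$, which is the form (\ref{eq:2.1}). Substituting into $[L_m,H_r]=-rH_{m+r}$ then forces both parities to share the same parameters $a,b$.

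\textbf{Main obstacle.} I expect Step 3 to be the hardest, specifically proving one-dimensionality and the rigid form of the $H$-action from uniform boundedness alone. I would first try to show that $H_{r}H_{s}$ acts on each $V^{\epsilon}$ as $c\,\cdot(\text{shift by } r+s)$, by exploiting commutativity of $\mathcal{I}$ together with the finite-dimensionality of the weight spaces (simultaneous eigenvectors of the commuting family $H_{r}H_{-r}$ on $V_\mu$). If that stalls, I would fall back on the $\mathcal{A}$-cover method: realize $V$ as a quotient of a cover module over $\mathcal{A}\rtimes\hat{\mathcal{L}}$, where the $H$'s act through the associative algebra.
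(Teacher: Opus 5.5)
The paper gives no proof of this statement. It is quoted from \cite{LPXZ} and used as a black box (it feeds into Theorem~\ref{thm:3.4}), so your proposal has to stand on its own. Steps~1--2 and the reduction at the start of Step~3 are sound: $c_1=c_2=0$, $\mathcal{I}$ is abelian, and either $\mathcal{I}V=0$ (giving $\bar F_{a,b}$ by Mathieu) or $\mathcal{I}V=V$ with no nonzero vector killed by $\mathcal{I}$.

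\textbf{The gap is Step~3.} All of the theorem's content lies there, and it is not carried out. You must show that on a simple cuspidal $V$ with $\mathcal{I}V=V$ the products $H_rH_s$ depend only on $r+s$, and that $\dim V_\mu=1$. You do not say which polynomial identity you would prove, nor how uniform boundedness would produce it. The annihilator technique you allude to (as in Lemma~\ref{lemm3.10}, or Billig--Futorny) only puts some high-order difference $\sum_{i=0}^{l}(-1)^i\binom{l}{i}H_{r-i}H_{s+i}$ into the annihilator. Here $l$ is uncontrolled and grows with composition length. It never yields the first-order relation $H_rH_s=H_{r+1}H_{s-1}$. Getting from order $l$ down to order $1$ is exactly the job of the $\mathcal{A}$-cover, together with a structure result for finite-dimensional modules over the associated jet algebra: its finite-order quotients are solvable, so its simple finite-dimensional modules are one-dimensional. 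You mention the cover only as a fallback, with none of these steps.

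Even granting the relation, ``one deduces that $V^0$ and $V^1$ are irreducible intermediate series modules'' skips real work. Step~2 only says the composition factors of $V^0$ are intermediate series, so $V^0$ could a priori be a nontrivial extension with $\dim V^0_\mu>1$. The missing steps are:
\begin{enumerate}
\item The relation makes $H_{\frac12}H_{-\frac12}$ commute with $\hat{\mathcal{L}}$, so it acts as a scalar $c$.
\item You must show $c\neq0$. If $c=0$, every product of four $H$'s vanishes, so the submodule $\mathcal{I}^2V$ satisfies $\mathcal{I}^2(\mathcal{I}^2V)=0$. This forces $\mathcal{I}^2V=0$, so $\mathcal{I}V$ is killed by $\mathcal{I}$, contradicting what you established.
\item Setting $A_m=c^{-1}H_{\frac12}H_{m-\frac12}$ makes $V^0$ a simple $\mathbb{C}[t^{\pm1}]\hat{\mathcal V}$-module that is free over $\mathbb{C}[t^{\pm1}]$.
\item Its rank must still be shown to be one, which again needs the jet-module fact.
\end{enumerate}
Only then do your normalization $H_rv_n=v_{n+r}$, $H_rv_s=cv_{r+s}$ and the matching of $a,b$ go through.

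\textbf{A smaller point in Step~1.} The relation $[H_r,H_{-r}]=rc_2$ for a single pair cannot give a contradiction. Indeed, $x^{\lambda}\mathbb{C}[x^{\pm1}]$ with $H_r\mapsto x$ and $H_{-r}\mapsto -rc_2\,\frac{d}{dx}$ is a graded module with one-dimensional pieces. You need two pairs:
\begin{enumerate}
\item Take a joint eigenvector $v\in V_\mu$ of the commuting operators $H_{-r}H_r$ for $r>0$.
\item For each $r$, either all $H_r^kv$ or all $H_{-r}^kv$ are nonzero.
\item Two values $r_1\neq r_2$ with the same good sign then give arbitrarily many vectors with distinct $H_{-r_1}H_{r_1}$-eigenvalues in a single weight space, contradicting uniform boundedness.
\end{enumerate}
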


For any $\hat{\mathcal{L}}$-module $V$ and $\lambda\in\mathbb{C}^*$, we define the evaluation $\mathcal{L}$-module $V(\lambda)$ as follows: $V(\lambda)=V$ as vector spaces, with the action of $\mathcal{L}$ on $V(\lambda)$ given by
\begin{equation*}
(x \otimes t^i)\cdot v = \lambda^{i} x v,
\end{equation*}
where $x\in\hat{\mathcal{L}}$.
Examples include the evaluation modules $F_{a,b,c}(\lambda)$, and $\bar{F}_{a,b}(\lambda)$.
Since the central elements act trivially, it is sometimes also common to denote the evaluation modules of the intermediate series of $\bar{\mathcal{L}}$.

\begin{theo}\label{thm:2.3}{\rm (see \cite{GLZ1})}
Let $V$ be a simple Harish‑Chandra module over the loop Virasoro algebra $\mathcal{V}$. Then $V$ is either a highest weight module, a lowest weight module, or an evaluation module of the intermediate series $\bar{F}_{a,b}(\lambda)$.
\end{theo}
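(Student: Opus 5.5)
We will argue by splitting according to whether $V$ is uniformly bounded. Since every weight of $V$ lies in a single coset $\lambda_0+\mathbb{Z}$, we can record the dimensions $d_k=\dim V_{\lambda_0+k}$ for $k\in\mathbb{Z}$. The \emph{dichotomy} we aim for is:
\begin{itemize}
\item[(a)] either the $d_k$ are bounded, and then $V$ is an evaluation module;
\item[(b)] or the support of $V$ is bounded on one side, and then $V$ is a highest or lowest weight module.
\end{itemize}

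\textbf{Uniformly bounded case.} First we show that the annihilator of $V$ contains $\hat{\mathcal{V}}\otimes I$ for some cofinite ideal $I$ of $\mathbb{C}[t^{\pm1}]$. Fix $m$. The operators $L_{m,i}$ map the space $\bigoplus_{k}V_{\lambda_0+k}$ of bounded-dimensional pieces into itself. Restricted to each weight space, the family $\{L_{m,i}\}_{i\in\mathbb{Z}}$ spans a space of operators of dimension at most $K^2$. Hence for each weight there is a nonzero Laurent polynomial $f$ with $L_m\otimes f$ acting by zero there.

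To make $f$ independent of the weight, we use the bracket $[L_{m,i},L_{n,j}]=(m-n)L_{m+n,i+j}$. This shows that the set $\{g\in\mathbb{C}[t^{\pm1}] : \hat{\mathcal{V}}\otimes g \text{ annihilates } V\}$ is an ideal. We then show it is nonzero via a polynomial-in-$m$ argument of Billig--Futorny type. The key point is that, for a cuspidal Virasoro-type module, the operators $\sum_s (-1)^s\binom{N}{s}L_{m-s,i}L_{n+s,j}$ vanish for $N$ large. This identity converts the weightwise vanishing into global vanishing.

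Once $V$ is a module over $\hat{\mathcal{V}}\otimes \mathbb{C}[t^{\pm1}]/I$, we factor $I=\prod_r(t-\lambda_r)^{n_r}$. The quotient then becomes a direct sum of truncated loop algebras $\hat{\mathcal{V}}\otimes \mathbb{C}[t]/t^{n_r}$, by the argument used in Proposition~\ref{lemm:2.2}. In each truncated factor, the part $\hat{\mathcal{V}}\otimes t\mathbb{C}[t]/t^{n_r}$ is a nilpotent ideal. For a simple cuspidal module this ideal must act trivially; to see this, take the subspace it annihilates, which is nonzero by an Engel-type argument on weight spaces and is a submodule. Simplicity also forces only one point $\lambda_r$ to survive, so $V$ is a simple cuspidal $\hat{\mathcal{V}}$-module pulled back by evaluation at $\lambda$. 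Mathieu's classification then gives $V\cong\bar F_{a,b}(\lambda)$.

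\textbf{Non-uniformly bounded case.} We show that $V$ has a nonzero vector $v$ with $L_{1,i}v=L_{2,i}v=0$ for all $i$. Since $L_{1,i}$ and $L_{2,i}$ generate $\mathcal{V}_+$, such a $v$ is a highest weight vector; the mirror situation, via the automorphism $L_{m,i}\mapsto -L_{-m,i}$, gives a lowest weight vector.

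First choose a weight $\mu$ with $d_\mu$ much larger than $d_{\mu+1}+d_{\mu+2}$. Mathieu's argument for $\hat{\mathcal{V}}$ supplies such weights as soon as the module is not cuspidal, at least after replacing $\mu$ by $\mu+k$ for suitable $k$. Next, the linear spans of $\{L_{1,i}|_{V_\mu}\}_i$ and $\{L_{2,i}|_{V_\mu}\}_i$ are spanned by finitely many $i$, say $i\in S$. Therefore the common kernel of all $L_{1,i},L_{2,i}$ on $V_\mu$ has codimension at most $|S|(d_{\mu+1}+d_{\mu+2})$.

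\textbf{Main obstacle.} The hard step is that $|S|$ may grow with the weight. We will try to bound it using the cofinite-ideal argument above, applied to the finitely generated subquotients $V_{\mu+1}\oplus V_{\mu+2}$. Failing that, we will use the trick of GLZ: work first with the subalgebra $\hat{\mathcal{V}}$, obtaining $\hat{\mathcal{V}}_+$-invariant vectors, and then show that $\mathcal{V}_+$ acts locally nilpotently. Once a highest weight vector is found, simplicity of $V$ finishes the proof.
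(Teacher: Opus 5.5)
The paper does not prove this statement; it quotes it from \cite{GLZ1}. So your proposal has to stand on its own, and it does not: in each case the decisive step is either invalid or only announced.

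\textbf{Uniformly bounded case.} You claim that the nilpotent ideal $\hat{\mathcal V}\otimes s\mathbb{C}[s]/(s^{n_r})$ has a nonzero common kernel ``by an Engel-type argument on weight spaces''. That method does not work.
\begin{itemize}
\item Engel's theorem concerns Lie algebras of nilpotent operators on a finite-dimensional space. Here the ideal is infinite-dimensional, most of its elements move between weight spaces, and nilpotency of a Lie algebra does not make its elements act nilpotently.
\item The general principle is false. In $\hat{\mathcal L}$ the Heisenberg ideal $\operatorname{span}\{H_r,C_2\}$ is nilpotent. Yet on the simple cuspidal module $F_{a,b,c}$ with $c\neq0$ every $H_r$ acts injectively, so the common kernel is $0$.
\item Triviality of $\hat{\mathcal V}\otimes s\mathbb{C}[s]$ on simple cuspidal modules must therefore come from a genuine computation. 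It has to use that this ideal is a copy of the adjoint module, $[L_m,L_n\otimes s]=(m-n)L_{m+n}\otimes s$. That computation is the heart of the cuspidal case and is absent. So is an argument that only one point $\lambda_r$ survives.
\end{itemize}

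The preceding step is also unproved. In the Billig--Futorny scheme, vanishing of $\sum_s(-1)^s\binom{N}{s}L_{m-s,i}L_{n+s,j}$ on cuspidal modules is derived from the list of simple cuspidal modules: Lemma~\ref{lem3.7} is the base case of Lemma~\ref{lemm3.10}. Using it here is circular, and you do not say how it yields a uniform $f$. This step can be repaired directly:
\begin{itemize}
\item Take a common eigenvector $v\in V_\mu$ of the abelian algebra $L_0\otimes\mathbb{C}[t^{\pm1}]$.
\item Then $J_m=\{f:(L_m\otimes f)v=0\}$ is an ideal, because $[L_0\otimes g,L_m\otimes f]=-mL_m\otimes gf$. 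Its codimension is at most $K$.
\item $J_mJ_n\subseteq J_{m+n}$ for distinct nonzero $m,n$ with $m+n\neq0$. Hence every $J_m$ with $m\neq0$ has its zeros in the finite set $S$ of zeros of $J_{\pm1},J_{\pm2}$, and contains $q=\prod_{a\in S}(t-a)^K$.
\item The brackets $[L_1\otimes f,L_{-1}\otimes g]=2L_0\otimes fg$ and $[L_2\otimes f,L_{-2}\otimes g]=4L_0\otimes fg+\frac12C_1\otimes fg$ give $(\hat{\mathcal V}\otimes q^2\mathbb{C}[t^{\pm1}])v=0$.
\item The vectors killed by this ideal form a nonzero submodule, hence all of $V$.
\end{itemize}

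\textbf{Non-uniformly bounded case.} The argument fails before the obstacle you flag. Mathieu's argument does not supply a weight $\mu$ with $d_\mu\gg d_{\mu+1}+d_{\mu+2}$. Consider a simple Verma module of $\hat{\mathcal V}$ pulled back by an evaluation. Its multiplicities are partition numbers, and $p(k)\le p(k-1)+p(k-2)$ for $k\ge1$. So the only such weight is the top one, whose existence is what you are trying to prove.

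What Mathieu actually does is compare $d_{\lambda-n}$, for fixed $\lambda$ and $n\to\infty$, with $d_\lambda+d_{\lambda+1}$. This gives a vector killed by $L_n,L_{n+1}$, hence by all $L_k$ with $k$ large. That is local nilpotence, not a highest weight vector. Bounded support must then still be derived, via \cite[Lemma 1.6]{M2} and a decomposition as in Lemmas~\ref{lem:4.2}--\ref{lem:4.4}.

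Moreover, the loop-index problem is the crux of the loop setting, and your two remedies are only announced. The ideal trick above needs a uniform bound on $\dim V_{\mu+m}$, which is exactly what fails here. ``Show that $\mathcal V_+$ acts locally nilpotently'' restates the goal. What you need is a nonzero $v$ with $L_{n,i}v=0$ for all $n\gg0$ and all $i$ simultaneously. Then $\{v: L_{n,i}v=0\ \text{for } n\gg0 \text{ and all } i\}$ is a nonzero submodule, which is the mechanism in the proof of Theorem~\ref{thm:4.5}. Producing such a vector from the $\hat{\mathcal V}$-structure is where the real work lies, and your proposal does not do it.
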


\section{Uniformly bounded modules}
In this section, we classify the  uniformly bounded  simple modules over the loop mirror Heisenberg-Virasoro algebra.

\subsection{\texorpdfstring{$\mathcal{A}\bar{\mathcal{L}}$-}-modules}
Let $\mathcal{A}_1$ and $\mathcal{A}_2$ denote the Laurent polynomial algebras $\mathbb{C}[z^{\pm\frac{1}{2}}]$ and $\mathbb{C}[t^{\pm1}]$, respectively. Both are unital associative algebras with multiplication rules $z^rz^s=z^{r+s}$ for all $r,s\in \frac{1}{2}\mathbb{Z}$ and $t^mt^n=t^{m+n}$ for all $m,n\in\mathbb{Z}$. We further define the tensor product algebra $\mathcal{A}=\mathcal{A}_1\otimes\mathcal{A}_2$, namely, $\mathcal{A}=\mathbb{C}[z^{\pm\frac{1}{2}}]\otimes\mathbb{C}[t^{\pm1}]$.
Clearly, $\mathcal{A}$ is a unital associative algebra with multiplication law
$$(z^{p_1}\otimes t^{n_1}) (z^{p_2}\otimes t^{n_2})=z^{p_1+p_2}\otimes t^{n_1+n_2}$$
for all $p_1,p_2\in\frac{1}{2}\mathbb{Z}$ and $n_1,n_2\in \mathbb{Z}$. It is well known that the centerless mirror Heisenberg-Virasoro algebra can be realized as the semidirect product ${\rm Der}(\mathbb{C}[z^{\pm\frac{1}{2}}])\ltimes \mathbb{C}[z^{\pm\frac{1}{2}}]$.
Accordingly, we give an explicit realization of the centerless loop mirror Heisenberg-Virasoro algebra $\bar{\mathcal{L}}$ in terms of the following basis elements:
\begin{eqnarray*} 
L_{m,i}=z^{m+1}\frac{d}{dz}\otimes t^i, \quad  H_{r,i}=z^{r}\otimes  t^i,
\end{eqnarray*}
where $m,i\in\mathbb{Z}$ and $r\in\mathbb{Z}+\frac{1}{2}$.

We  now describe the structure of uniformly bounded $\bar{\mathcal{L}}$-modules that admit a compatible action of the commutative  unital algebra $\A$.

\begin{defi} {\rm (see \cite{BF1})}
A module $V$ is called an $\A\bar{\mathcal{L}}$-module if it is simultaneously a module over $\bar{\mathcal{L}}$ and the commutative unital algebra $\mathcal{A}=\mathbb{C}[z^{\pm\frac{1}{2}}]\otimes\mathbb{C}[t^{\pm1}]$, with the two structures satisfying the compatibility condition
\begin{eqnarray}\label{511}
 y(fv)=(yf)v + f(yv)  \quad \mathrm{for}\ f\in \A, y \in \bar{\mathcal{L}}, v\in V.
\end{eqnarray}
\end{defi}

Let $V$ be a weight module over $\A\bar{\mathcal{L}}$. By \eqref{511}, the action of $\A$ is compatible with the weight grading of $V$:
$$\A_\alpha V_\lambda\subset V_{\alpha+\lambda} \quad \mathrm{for}\ \alpha, \lambda\in\frac{1}{2}\Z.$$
Suppose that the $\A\bar{\mathcal{L}}$-module $V$ admits a weight space decomposition and that one of its weight spaces is finite dimensional. Since all nonzero homogeneous elements of $\A$ are invertible, all weight spaces of $V$ have the same dimension. Hence, $V$ is a free $\A$-module of finite rank. It follows that the $\A\bar{\mathcal{L}}$-module $V$ is uniformly bounded (as an $\bar{\mathcal{L}}$-module).

Assume that $V$ is a uniformly bounded $\A\bar{\mathcal{L}}$-module. Let $W=V_g$ for $g\in\frac{1}{2}\Z$, with $\mathrm{dim}(W)<\infty$.
Since $V$ is a free $\A$-module, we may write
$$V\cong\A\otimes W.$$
\begin{theo}  
\label{thm:3.2}
Assume that $V$ is a uniformly bounded $\mathcal{A}\bar{\mathcal{L}}$-module and that $V= \A\otimes W$, where $W=V_g$ for $g\in\frac{1}{2}\Z$. Then $V$ is isomorphic to an evaluation module over $\bar{\mathcal{L}}$; namely, for any $v\in V$,  $L_{m,i}v=\lambda^iL_m v,H_{r,i}v=\lambda^iH_r v$, where $m,i\in\mathbb{Z}$ and $\lambda\in\mathbb{C}^*$.
\end{theo}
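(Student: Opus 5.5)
We follow the Billig--Futorny strategy. The idea is to transport the $\bar{\mathcal L}$-action to a finite-dimensional $W$ and show that the loop variable $t$ acts through a single scalar $\lambda$. Write elements of $V=\A\otimes W$ as $z^{p}t^{j}\otimes w$, with $w\in W=V_g$.

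First we define, for $m,i\in\Z$ and $r\in\Z+\frac12$, the shifted operators
\[
\bar L_{m,i}:=z^{-m}t^{-i}L_{m,i},\qquad \bar H_{r,i}:=z^{-r}t^{-i}H_{r,i}.
\]
By the compatibility condition \eqref{511}, each of these preserves $W=V_g$. The action on $V$ is then recovered through
\[
L_{m,i}(z^pt^j\otimes w)=z^{m+p}t^{i+j}\otimes\bigl(p\,w+\bar L_{m,i}w\bigr),\qquad H_{r,i}(z^pt^j\otimes w)=z^{r+p}t^{i+j}\otimes\bar H_{r,i}w .
\]
The first formula uses $L_{m,i}(z^pt^j)=p\,z^{m+p}t^{i+j}$, and $H_{r,i}$ kills $\A$. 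The second formula shows that $\bar H_{r,i}$ acts on $W$ by an operator commuting with the $\A$-structure.

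Next we rewrite the brackets of $\bar{\mathcal L}$ as relations among these operators on $W$. We use the generating-function trick: regard $\bar L_{m,i}$ as a function of $(m,i)\in\Z^2$ with values in $\operatorname{End}(W)$. The brackets give
\[
[\bar L_{m,i},\bar L_{n,j}]=(m-n)\bigl(\bar L_{m+n,i+j}-\bar L_{m,i}-\bar L_{n,j}\bigr)+\text{(terms from }z^{-m}\text{ differentiation)},
\]
and analogous relations with $\bar H$. Since $W$ is finite-dimensional, the standard argument (Billig--Futorny, via the shift operators $\Omega_{k,s}^{(p)}$ built from iterated differences in $m$ and in $i$) shows that these operators are polynomial in $m$ and exponential in $i$. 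More precisely, for each $w$ the map $(m,i)\mapsto\bar L_{m,i}$ lies in the span of $m^a\mu^i$, where the $\mu$ range over a finite set of eigenvalues of the commuting operator family $\bar L_{0,1}$, and likewise for $\bar H_{r,i}$.

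Then we reduce $t$ to a scalar, and this is where the loop structure enters. Consider the commuting family $\{T_i:=\bar L_{0,i}\}$ together with the operators arising from $t$. Simplicity is not assumed in the statement, so it presumably has to come from the surrounding context: one works with a simple $\A\bar{\mathcal L}$-module, or applies the result to composition factors. Under simplicity, one shows that the operator $t$ on $V$ commutes with the $\bar{\mathcal L}$-action up to the shift $t^i$. The key identity to establish is that, as operators on $V$,
\[
L_{m,i}=t^{i}\,\lambda^{i}\,\cdots
\]
Concretely, the plan is to prove that $D_{m,i}:=L_{m,i}-t^{i}L_{m,0}$ annihilates a nonzero $\bar{\mathcal L}$-stable subspace. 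Once $\mu^i$-exponentiality is known, we choose a common eigenvector of the $\bar L_{0,i}$ on $W$ with eigenvalues $\lambda^i$. The relations then force $\bar L_{m,i}=\lambda^i\bar L_{m,0}$ and $\bar H_{r,i}=\lambda^i\bar H_{r,0}$ on the corresponding generalized eigenspace. That eigenspace is $\A$- and $\bar{\mathcal L}$-stable, so by simplicity it is all of $V$. Finally, identifying $t$ with $\lambda$ on the quotient gives the evaluation form $L_{m,i}v=\lambda^iL_mv$ and $H_{r,i}v=\lambda^iH_rv$.

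The main obstacle is the exponential-in-$i$ step: proving that on finite-dimensional $W$ the dependence on $i$ is of the form $\lambda^i$ with a single $\lambda$, and with no polynomial factors $i^b$. Nilpotent parts must vanish, and a single eigenvalue must be isolated. For this I would first use the abelian relations $[\bar L_{0,i},\bar L_{0,j}]=0$ and $[\bar L_{0,i},\bar H_{r,j}]=0$ to diagonalize. Then I would apply the shift-operator (finite-difference) identity in the $d$-parameter family to show that $\sum_k (-1)^k\binom{N}{k}\bar L_{m,i+k}\mu^{-k}$ vanishes for large $N$. Finally, I would kill the nilpotent part by comparing the $(m-n)$ coefficients in the bracket relation, since an $i^b$ term with $b\ge1$ produces an inconsistent degree.
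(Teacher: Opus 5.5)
You are right that simplicity has to be imported; the paper also uses it tacitly, through Schur's lemma. But the proposal misses the idea that produces $\lambda$, and replaces it with a step that fails.

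Every $y\in\bar{\mathcal L}$ acts on $\mathcal A$ through $z$ only: $L_{m,i}$ differentiates in $z$, and $H_{r,i}$ acts by zero. So $y(1\otimes t^{j})=0$, and \eqref{511} gives $y(t^{j}v)=t^{j}(yv)$ exactly, not ``up to the shift $t^{i}$''. Hence $1\otimes t$ commutes with both $\bar{\mathcal L}$ and $\mathcal A$. On a simple module Schur's lemma makes it a scalar $\lambda\in\mathbb C^{*}$, and $t^{i}$ then acts by $\lambda^{i}$. This is the paper's key step; the paper then passes to the evaluation form via \eqref{3.3}.

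In particular $t$ cannot act freely. If it did, $(t-\lambda)V$ would be a proper nonzero submodule and $V_g\supseteq\mathbb C[t^{\pm1}]\otimes W$ would be infinite dimensional. So your free-in-$t$ bookkeeping has to be collapsed before anything else.

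Your substitute, a common eigenvector of the commuting family $\bar L_{0,i}$ ``with eigenvalues $\lambda^{i}$'', cannot work. The relation $[\bar L_{0,i},\bar L_{0,j}]=0$ imposes no multiplicativity on eigenvalues. Moreover, on the very modules in the conclusion, $\bar L_{0,i}=t^{-i}L_{0,i}$ acts on $V_g$ by the scalar $g$ for every $i$, which is zero if $g=0$. Relatedly, in your normalization the target is $\bar L_{m,i}=\bar L_{m,0}$. Your claimed $\bar L_{m,i}=\lambda^{i}\bar L_{m,0}$, like the unfinished $L_{m,i}=t^{i}\lambda^{i}\cdots$, counts $\lambda$ twice: once $t$ acts by $\lambda$ it would give $L_{m,i}=\lambda^{2i}L_{m,0}$.

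The second gap is that the decisive step is only asserted. ``Exponential in $i$'' is not a consequence of the Billig--Futorny method. The annihilators $\Omega^{(l)}$ of Lemma~\ref{lemm3.10} are finite differences in the $z$-degree, with the loop indices carried along additively. They give polynomiality in $m$ and nothing about the dependence on $i$. Likewise, ``the relations then force'' and the remark that nilpotent parts produce ``an inconsistent degree'' are not arguments.

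Your operator $D_{m,i}$ is the right object once $t$ acts by $\lambda$. It is the action of $L_m\otimes(t^{i}-\lambda^{i})$, and it is $\mathcal A$-linear by \eqref{511}. Together with $H_{r,i}-\lambda^{i}H_{r,0}$, these operators represent the ideal of $\bar{\mathcal L}$ spanned by all $x\otimes(t^{i}-\lambda^{i})$. So their common kernel is an $\mathcal A\bar{\mathcal L}$-submodule, and simplicity reduces the theorem to exhibiting one nonzero vector they all kill. That is the real content, and your proposal does not supply it. Nor can it be read off from \eqref{3.3} alone, which only transports the action from $W$ to $\mathcal A W$ and says nothing about $L_{m,i}w$ versus $\lambda^{i}L_{m,0}w$ for $w\in W$.
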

\begin{proof}
For any $m,i,\alpha_2\in\Z,\alpha_1\in\frac{1}{2}\mathbb{Z},r\in\mathbb{Z}+\frac{1}{2},w\in W$, it follows from \eqref{511} that
\begin{align}
\label{3.3}
L_{m,i}\big((z^{\alpha_1}\otimes t^{\alpha_2}) w\big)=\big(L_{m,i}(z^{\alpha_1}\otimes t^{\alpha_2})\big)w+z^{\alpha_1}\otimes t^{\alpha_2}(L_{m,i} w).
 \end{align}
Let $\bar{\mathcal{L}}_{\mathcal{A}}=\bar{\mathcal{L}}\oplus(\mathcal{A}_1\otimes\mathcal{A}_2)$.
Clearly, $[1\otimes\mathcal{A}_2,U(\bar{\mathcal{L}}_{\mathcal{A}})]=0$.
Thus, the elements $1\otimes t^i$ are central in ${\bar{\mathcal{L}}}_{\A}$.
By Schur's Lemma, we may assume that $1\otimes t^i$ acts on $W$ as the scalar $\lambda_i\in\mathbb{C}$. Let $w\in W$.
For any $i\in\mathbb{Z}$ and $m,n\in\mathbb{Z}$ with $m-n\neq0$, it follows from $[L_{m,i},L_{n,j}]=(m-n)L_{m+n,i+j}$ that
\begin{align*}
\lambda_{i+j}L_{m+n} w=&\frac{1}{m-n}(L_{m,i}L_{n,j}-L_{n,j}L_{m,i})w
 \\=&\frac{1}{m-n}\Big((L_{m}\otimes t^i)(L_n\otimes t^j)-(L_n\otimes t^j)(L_{m}\otimes t^i)\Big)w
\\=&\frac{\lambda_i
\lambda_j}{m-n}\big(L_{m}L_n-L_nL_{m}\big)w
\\=&\lambda_i
\lambda_jL_{m+n} w,
\end{align*}
which implies $\lambda_i=\lambda^i$.
Then, for any $v\in V$, $i\in\mathbb{Z}$, and $r\in\mathbb{Z}+\frac{1}{2}$, \eqref{3.3} and
\begin{align*}
H_{r,i}\big((z^{\alpha_1}\otimes t^{\alpha_2}) w\big)=\big(H_{r,i}(z^{\alpha_1}\otimes t^{\alpha_2})\big)w+z^{\alpha_1}\otimes t^{\alpha_2}(H_{r,i} w).
 \end{align*}
respectively yield
$
L_{m,i}v=\lambda^iL_{m}v
$ and
$
H_{r,i}v=\lambda^iH_{r}v.
$
This completes the proof.
\end{proof}
\begin{rema}
By Theorem \ref{thm:3.2}, the classification of uniformly bounded simple $\mathcal{A}\bar{\mathcal{L}}$-modules reduces to the classification of uniformly bounded simple modules over the centerless mirror Heisenberg-Virasoro algebra.
\end{rema}

Then, combining Theorems \ref{thm:2.4} and \ref{thm:3.2}, we obtain the main result of this subsection.
\begin{theo}\label{thm:3.4}
Let $V$ be a uniformly bounded simple module over $\mathcal{A}\bar{\mathcal{L}}$. Then, as an $\bar{\mathcal{L}}$-module, $V\cong F_{a,b,c}(\lambda)$ or $V\cong \bar{F}_{a,b}(\lambda)$ for some $a,b\in\mathbb{C}$ and $c\in\mathbb{C}^*$.
\end{theo}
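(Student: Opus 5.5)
The plan is to combine Theorem~\ref{thm:3.2} with the classification in Theorem~\ref{thm:2.4}. The only real work is to check that simplicity and uniform boundedness carry over from the $\mathcal{A}\bar{\mathcal{L}}$-structure to the $\hat{\mathcal{L}}$-structure.

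First, let $V$ be a uniformly bounded simple $\mathcal{A}\bar{\mathcal{L}}$-module. As in the discussion preceding Theorem~\ref{thm:3.2}, $V$ is a free $\mathcal{A}$-module $V\cong\mathcal{A}\otimes W$, with $W=V_g$ finite dimensional. The elements $1\otimes t^i$ are central in $U(\bar{\mathcal{L}}_{\mathcal{A}})$. Since $V$ is simple and has countable dimension, Schur's Lemma (Dixmier's version) says they act on all of $V$ by scalars $\lambda_i$. The computation in the proof of Theorem~\ref{thm:3.2} then gives $\lambda_i=\lambda^i$ with $\lambda=\lambda_1\in\mathbb{C}^*$; note $\lambda\neq0$ because $1\otimes t$ is invertible in $\mathcal{A}$. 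Theorem~\ref{thm:3.2} now yields $L_{m,i}v=\lambda^iL_mv$ and $H_{r,i}v=\lambda^iH_rv$ for all $v\in V$. Hence $V=V'(\lambda)$, where $V'$ is $V$ regarded as a module over the centerless mirror Heisenberg--Virasoro algebra $\hat{\mathcal{L}}\otimes 1$, with the central elements acting trivially.

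Second, we show that $V'$ is a simple uniformly bounded $\hat{\mathcal{L}}$-module. Uniform boundedness is immediate, since the weight spaces are unchanged. For simplicity, observe that the $\bar{\mathcal{L}}$-action factors through $\hat{\mathcal{L}}\otimes1$. Hence the $\bar{\mathcal{L}}$-submodules and the $\hat{\mathcal{L}}$-submodules of $V$ coincide.

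This step is the main obstacle: simplicity of $V$ is as an $\mathcal{A}\bar{\mathcal{L}}$-module, not as an $\bar{\mathcal{L}}$-module, so an $\hat{\mathcal{L}}$-submodule need not be $\mathcal{A}$-stable. I would handle it by first treating $V'$ directly as an $\hat{\mathcal{L}}$-module with finite-dimensional weight spaces. Uniform boundedness and the structure of $\hat{\mathcal{L}}$ give a composition series of finite length, and each simple subquotient is classified by Theorem~\ref{thm:2.4}. The statement of Theorem~\ref{thm:3.4} is to be read as identifying $V$ with $F_{a,b,c}(\lambda)$ or $\bar F_{a,b}(\lambda)$ at this level. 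If a genuine isomorphism is required, I would use the $\mathcal{A}_1$-action (multiplication by $z^{1/2}$) to move between weight spaces. This shows that a nonzero $\hat{\mathcal{L}}$-submodule $N$ generates $\mathcal{A}N=V$. Comparing ranks via the known $\mathcal{A}$-cover structure of intermediate series modules then forces $W$ to be one-dimensional, or reduces to a simple subquotient.

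Finally, applying Theorem~\ref{thm:2.4} to the (simple) $\hat{\mathcal{L}}$-module $V'$ gives $V'\cong F_{a,b,c}$ or $V'\cong\bar F_{a,b}$ for some $a,b\in\mathbb{C}$ and $c\in\mathbb{C}^*$. Transporting this isomorphism through the evaluation at $\lambda$ gives $V\cong F_{a,b,c}(\lambda)$ or $V\cong\bar F_{a,b}(\lambda)$ as $\bar{\mathcal{L}}$-modules, as claimed.
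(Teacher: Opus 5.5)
Your route is the paper's route; its entire proof is ``combine Theorems~\ref{thm:2.4} and~\ref{thm:3.2}''. However, the step you flag as the main obstacle is a genuine gap, and your patches do not close it. Your last paragraph applies Theorem~\ref{thm:2.4} to ``the (simple) $\hat{\mathcal{L}}$-module $V'$'', but simplicity of $V'$ is never proved. It is true that a nonzero $\bar{\mathcal{L}}$-submodule $N$ satisfies $\mathcal{A}N=V$, since by \eqref{511} $\mathcal{A}N$ is an $\mathcal{A}\bar{\mathcal{L}}$-submodule. But this does not give $N=V$. Forcing $\dim W=1$ would not help either, because the failure already occurs with one-dimensional weight spaces. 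The paper's one-line proof makes the same leap silently.

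In fact the step cannot be repaired, because the statement is false as literally written. The element $z^{\frac12}\in\mathcal{A}$ acts invertibly and changes weights by $\frac12$. So if $V_\mu\neq0$ for an $\mathcal{A}\bar{\mathcal{L}}$-module $V$, then $V_{\mu+r}\neq 0$ for all $r\in\frac12\mathbb{Z}$. If moreover $\mathcal{I}V=0$, then $\bar{\mathcal{L}}$ only shifts weights by integers, and $\bigoplus_{n\in\mathbb{Z}}V_{\mu+n}$ is a nonzero proper $\bar{\mathcal{L}}$-submodule.

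Such simple modules exist. Take $V=\mathbb{C}[z^{\pm\frac12}]$, let $z^p\otimes t^j$ act by multiplication by $\lambda^jz^p$, let $L_{m,i}$ act by $\lambda^i$ times the derivation of $\mathcal{A}_1$ realizing $L_m$, and let $\mathcal{I}$ act by $0$. This satisfies \eqref{511} and has one-dimensional weight spaces. It is $\mathcal{A}\bar{\mathcal{L}}$-simple: a nonzero submodule is $L_0$-stable, so it contains some $z^p$, hence $z^{-p}z^p=1$. Yet as an $\bar{\mathcal{L}}$-module it is decomposable and contains the trivial submodule $\mathbb{C}1$. Hence it is isomorphic to no $F_{a,b,c}(\lambda)$ and no $\bar F_{a,b}(\lambda)$.

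What is true, and all that the proof of Theorem~\ref{thm:3.13} actually uses, is the subquotient version your ``at this level'' reading points toward. Granting Theorem~\ref{thm:3.2}, we have $V=V'(\lambda)$, so the $\bar{\mathcal{L}}$-submodules of $V$ are exactly the $\hat{\mathcal{L}}$-submodules of $V'$. Therefore every nontrivial simple $\bar{\mathcal{L}}$-subquotient (in particular every simple $\bar{\mathcal{L}}$-quotient) of $V$ has the form $V''(\lambda)$, with $V''$ a simple uniformly bounded $\hat{\mathcal{L}}$-module, and Theorem~\ref{thm:2.4} applies to $V''$. You should state and prove that weaker claim rather than assert simplicity of $V'$.
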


\subsection{\texorpdfstring{$\mathcal{A}$-}-cover of  uniformly bounded \texorpdfstring{$\bar{\mathcal{L}}$-}-modules}

Let $p,q\in\frac{1}{2}\mathbb{Z}$, $\alpha_{p},\beta_{q}\in\mathbb{Z}$, and $(\alpha+\beta)_{p+q}=\alpha_{p}+\beta_{q}$. Note that $0_0=0$.  
For any $m,i\in\mathbb{N}$ and $k,s,\alpha_{k-i},\beta_{s+i}\in\mathbb{Z}$, we define the following linear operator:
\[
\Omega_{k,s,\alpha,\beta}^{(m)} = \sum_{0\leq i\leq m} (-1)^i \binom{m}{i} L_{k-i,\alpha_{k-i}} L_{s+i,\beta_{s+i}}\in U(\bar{\mathcal{L}}).
\]

The following lemma follows from a direct computation:

\begin{lemm}\label{lem3.7}
For all $k,s,i,j,\alpha_i,\beta_i\in\mathbb Z$ and $(\alpha+\beta)_{i+j}=\alpha_i+\beta_j$, the linear operators $\Omega_{k,s,\alpha,\beta}^{(3)}$ lie in the annihilators of all intermediate series $\mathcal L$-modules $F_{a,b,c}(\lambda)$.
\end{lemm}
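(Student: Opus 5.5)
Only the $L$-parts of $\mathcal L$ enter $\Omega^{(3)}_{k,s,\alpha,\beta}$, so the plan is to compute its action on each basis vector $v_j$ of $F_{a,b,c}(\lambda)$ directly. On the evaluation module the action is $L_{m,\gamma}v_j=\lambda^{\gamma}(a+mb-j)v_{m+j}$ for $j\in\frac12\mathbb Z$. The $H$-action and the value of $c$ never enter, since $\Omega^{(3)}$ is built from $L$'s alone.

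Fix $j\in\frac12\mathbb Z$. Each summand of $\Omega^{(3)}_{k,s,\alpha,\beta}v_j$ lands in $\mathbb C v_{k+s+j}$, because the indices $(k-i)+(s+i)=k+s$ do not depend on $i$. The scalar factor coming from the loop variable in the $i$-th summand is
\[
\lambda^{\alpha_{k-i}+\beta_{s+i}}=\lambda^{(\alpha+\beta)_{k+s}},
\]
using the convention $(\alpha+\beta)_{p+q}=\alpha_p+\beta_q$. I read this convention as saying that the sum $\alpha_{k-i}+\beta_{s+i}$ depends only on $k+s$, hence is independent of $i$; this is exactly the point of introducing it. Pulling this common power of $\lambda$ out, one gets
\[
\Omega^{(3)}_{k,s,\alpha,\beta}v_j=\lambda^{(\alpha+\beta)_{k+s}}\sum_{i=0}^{3}(-1)^i\binom{3}{i}\,(a+(k-i)b-s-i-j)\,(a+(s+i)b-j)\,v_{k+s+j}.
\]
This reduces the claim to the corresponding statement for the loop-free Virasoro part acting on $F_{a,b}$.

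The remaining scalar is $\sum_i(-1)^i\binom3i P(i)$, where
\[
P(i)=(a+kb-s-j-i(b+1))\,(a+sb-j+ib)
\]
is a polynomial in $i$ of degree at most $2$. The alternating sum $\sum_{i=0}^{3}(-1)^i\binom{3}{i}P(i)$ is the third finite difference $\Delta^3P(0)$ up to sign, and it vanishes for every polynomial of degree less than $3$. So every summand cancels, $\Omega^{(3)}v_j=0$ for all $j$, and $\Omega^{(3)}$ annihilates $F_{a,b,c}(\lambda)$. The same computation covers the quotients $\bar F_{a,b}(\lambda)$, since $\Omega^{(3)}$ annihilates $F_{a,b}(\lambda)$ and therefore any subquotient of it.

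The only delicate step is the loop-index bookkeeping: one has to confirm that the convention really makes the $\lambda$-exponent independent of $i$. If it did not, the alternating sum would carry non-constant weights $\lambda^{\alpha_{k-i}+\beta_{s+i}}$, and the finite-difference argument would fail. Once that independence is granted, the vanishing is purely the degree count on $P(i)$.
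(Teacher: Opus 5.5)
Your proof is correct and is essentially the paper's argument. Both act $\Omega^{(3)}_{k,s,\alpha,\beta}$ on a basis vector $v_p$, pull out the common factor $\lambda^{(\alpha+\beta)_{k+s}}$ using the index convention, and observe that the remaining coefficient is a polynomial in $i$ of degree at most $2$ (leading coefficient $-b(b+1)$), so its third finite difference vanishes.
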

\begin{proof}
Let $v_p\in F_{a,b,c}(\lambda)$ with $p\in \frac12\mathbb Z$. We compute
\[
\begin{aligned}
\Omega_{k,s,\alpha,\beta}^{(3)} v_p
&= \sum_{0\leq i\leq3}(-1)^{i}\binom{3}{i}\bigl(L_{k-i,\alpha_{k-i}} L_{s+i,\beta_{s+i}}\Bigr)v_p \\
&= \sum_{0\leq i\leq3}(-1)^{i}\binom{3}{i}\bigl(i^2 A + iB + C\Bigr)\lambda^{(\alpha+\beta)_{k+s}} v_{k+s+p},
\end{aligned}
\]
where
\[
A = -b(b+1),\quad B = (k-s)b^2-2sb-a+p,\quad C=(a+sb-p)(a+kb-s-p).
\]
The summation is the third‑order finite difference of a quadratic polynomial in $i$, hence it vanishes identically. Therefore $\Omega_{k,s,\alpha,\beta}^{(3)}v_p = 0$, which completes the proof.
\end{proof}

\subsubsection{Annihilators of uniformly bounded \texorpdfstring{$\bar{\mathcal{L}}$-}-modules}
One of the principal objectives of this section is to extend the Virasoro annihilator for uniformly bounded modules, originally introduced in \cite{BF1}, to the loop mirror Heisenberg-Virasoro algebra.

To establish the theorem stated below, we first introduce some notation and a shift operator. For arbitrary pairwise distinct indices $i_1, \dots, i_d$ selected from $\{1, \dots, d\}$, where $d \in \mathbb{Z}_+$, the corresponding index permutation map is defined by
$$
	\tau_{i_1,\ldots,i_d}(x_1\cdots x_d)=x_{i_1}\cdots x_{i_d}.
$$ Denote the anticommutator $[X,Y]_+ = XY+YX$.
For $m_i,\alpha_{m_i}^i\in\mathbb{Z}$,   $i\in\{1,\ldots,d\}$,
we denote $$L(\alpha_{m_1}^1,\ldots,\alpha^d_{m_d})=L_{m_1,\alpha_{m_1}^1},
\ldots,L_{m_d,\alpha_{m_d}^d}\in U(\bar{\mathcal{L}}).$$ Given $a,b,d\in\mathbb Z_+$ satisfying $a\neq b$, we define the shift operator associated with the $d$-parameter family by
\begin{equation*} 
S_{ab}^{(d)}(L(\alpha_{m_1}^1,\ldots,\alpha^d_{m_d}))=L(\alpha_{m_1}^1,\ldots,\alpha_{m_a-1}^a,\ldots,\alpha_{m_b+1}^b,\ldots,\alpha^d_{m_d}),\quad\Delta_{ab}^{(d)}:=1-S_{ab}^{(d)}.
\end{equation*}
For simplicity, we suppress the dependence on $d$ and write $S_{ab}:=S_{ab}^{(d)}$ and $\Delta_{ab}:=\Delta_{ab}^{(d)}$. These operators act only on indices and therefore commute with one another. Consequently, every $\Delta^m_{ab}$ can be regarded as an element of the polynomial ring $\mathbb{C}[\Delta_{ab}]$.
We note that index permutation maps and shift operators do not commute. The following provides a counterexample:
$$\tau_{132}\big(S_{12}L(\alpha_k,\beta_s,\gamma_q)\big)=L(\alpha_{k-1},\gamma_q,\beta_{s+1})\neq L(\alpha_{k-1},\gamma_{q+1},\beta_{s})= S_{12}
\big(\tau_{132}L(\alpha_k,\beta_s,\gamma_q)\big).$$
Clearly,
$$
\Omega_{k,s,\alpha,\beta}^{(m)} =\sum_{i=0}^{m} (-1)^i \binom{m}{i} L(\alpha_{k-i},\beta_{s+i})
=\Delta_{12}^mL(\alpha_{k},\beta_{s}).
$$
Indeed, $\Delta_{12}^m$ is precisely the $m$-th order finite difference operator applied to the univariate function $f(i)=ai+b$. We now present an example illustrating how the difference operator $\Delta$ simplifies the computation.
\begin{exam}
\label{lem:recursion-Omega}
For all $k,s,i,\alpha_i,\beta_i\in\mathbb{Z}$ and $m\in\mathbb{N}$, the identity
\[
\Omega_{k,s,\alpha,\beta}^{(m+1)} = \Omega_{k,s,\alpha,\beta}^{(m)}-\Omega_{k-1,s+1,\alpha,\beta}^{(m)}
\]
holds.
\end{exam}
\begin{proof}
By the definition of the difference operator $\Delta_{12}$, it follows that
\[
\begin{aligned}
\Omega_{k,s,\alpha,\beta}^{(m)}-\Omega_{k-1,s+1,\alpha,\beta}^{(m)}
=\Delta_{12}^{m}(1-S_{12})L(\alpha_{k},\beta_{s})=\Delta_{12}^{m+1}L(\alpha_{k},\beta_{s})
=\Omega_{k,s,\alpha,\beta}^{(m+1)}.
\end{aligned}
\]
\end{proof}
The preceding example illustrates the recursive property satisfied by the operator $\Omega_{k,s,\alpha,\beta}^{(m)}$.

We now establish the following theorem, referred to as the loop version of the Billig-Futorny identity, which provides a simpler proof of Theorem~3.3 than that presented in \cite{BF1}.
\begin{theo}\label{thm:3.9}
Let $k,s,p,q,i,j,\alpha_i,\beta_i,\gamma_i,\eta_i$ be integers.   Suppose that $f,g\in\{\alpha,\beta,\gamma,\eta\}$  satisfy  $(f+g)_{i+j}=f_i+g_j$. 
Then the following identity holds for all integers $m,n\ge 2$:
\[
\begin{aligned}
&\sum_{\substack{0\leq i\leq m,\\ 0\leq j\leq n}}(-1)^{i+j}\binom{m}{i}\binom{n}{j}
\Biggl(\Bigl[\Omega_{k-i,s-j,\alpha,\beta}^{(m)},\Omega_{q+i,p+j,\gamma,\eta}^{(n)}\Bigr]_+
-\Bigl[\Omega_{k-i,q-j,\alpha,\gamma}^{(m)},\Omega_{s+i,p+j,\beta,\eta}^{(n)}\Bigr]_+\Biggr)
\\=&(q-s)\Biggl(
(p-k+2n)\Omega^{(2(m+n)-1)}_{k+p+2n,s+q-2n,\alpha+\eta,\beta+\gamma}
 -(p-k+2m)\Omega^{(2(m+n)-1)}_{k+p+2n-1,s+q-2n+1,\alpha+\eta,\beta+\gamma}
\Biggr).
\end{aligned}
\]
\end{theo}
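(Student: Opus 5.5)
The plan is to encode the whole left-hand side as a single difference operator applied to one anticommutator of four-fold products, and then reduce the identity to commutator computations in $U(\bar{\mathcal{L}})$ controlled by polynomiality in the shift parameters. I will work with four-letter words $L(\alpha_{k},\beta_{s},\gamma_{q},\eta_{p})=L_{k,\alpha_k}L_{s,\beta_s}L_{q,\gamma_q}L_{p,\eta_p}$ and the commuting shift operators $S_{ab}=S^{(4)}_{ab}$. Since $\Omega^{(m)}_{k,s,\alpha,\beta}=\Delta_{12}^mL(\alpha_k,\beta_s)$, the product $\Omega^{(m)}_{k-i,s-j,\alpha,\beta}\Omega^{(n)}_{q+i,p+j,\gamma,\eta}$ equals $\Delta_{12}^m\Delta_{34}^n$ applied to $L(\alpha_{k-i},\beta_{s-j},\gamma_{q+i},\eta_{p+j})$. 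The outer sum over $i,j$ with weights $(-1)^{i+j}\binom{m}{i}\binom{n}{j}$ is then $\Delta_{13}^m\Delta_{24}^n$. Hence the first anticommutator term equals
\[
(1+\tau_{3412})\,\Delta_{12}^m\Delta_{34}^n\Delta_{13}^m\Delta_{24}^n\,L(\alpha_k,\beta_s,\gamma_q,\eta_p).
\]
The second term is the same expression with the roles of $\beta$ and $\gamma$ swapped, i.e. with $\Delta_{13}^m\Delta_{24}^n\Delta_{12}^m\Delta_{34}^n$ applied to $L(\alpha_k,\gamma_q,\beta_s,\eta_p)$. The point of the shift formalism is that the operator parts coincide, so the left-hand side reduces to
\[
D\bigl((1+\tau_{3412})(L(\alpha_k,\beta_s,\gamma_q,\eta_p)-L(\alpha_k,\gamma_q,\beta_s,\eta_p))\bigr),\qquad D=\Delta_{12}^m\Delta_{34}^n\Delta_{13}^m\Delta_{24}^n,
\]
with the caveat noted after the definition: the shifts have to be written in the correct positions after permutation.

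Next I reorder everything into a fixed order $L_{k}L_{q}L_{s}L_{p}$. The difference $L_kL_sL_qL_p-L_kL_qL_sL_p$ equals $L_k[L_s,L_q]L_p=(s-q)L_{k,\alpha}L_{s+q,\beta+\gamma}L_{p,\eta}$, where the hypothesis $(f+g)_{i+j}=f_i+g_j$ lets me merge loop indices. The $\tau_{3412}$ part, $L_qL_pL_kL_s-L_sL_pL_kL_q$, I move into the same normal form using commutators. The quartic terms cancel by the antisymmetry built into the two anticommutators. What remains is a sum of cubic words whose coefficients are linear in the indices, and of quadratic words $L_{a}L_{b}$ whose coefficients are quadratic.

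The key observation is that a shift operator $\Delta_{ab}$ applied to a word in which positions $a$ and $b$ have been merged into a single generator $L_{x+y}$ acts as the identity shift on the merged index, so $\Delta_{ab}$ kills it unless it hits a coefficient. Because the coefficients are polynomial of low degree in the shifted variables, high powers of $\Delta$ annihilate most terms: a polynomial of degree $d$ in $i$ is killed by $\Delta^{d+1}$. With $m,n\ge2$, all cubic contributions should die, since each involves an unmerged pair carrying $\Delta^{\ge 2}$ against a linear coefficient. What survives is a quadratic word $L_{k+p+\cdots}L_{s+q-\cdots}$ on which the remaining composite difference operator acts along the single direction $S_{12}$. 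By the identity $\Omega^{(N)}=\Delta_{12}^NL$ and Example \ref{lem:recursion-Omega}, this collapses to $\Omega^{(2(m+n)-1)}$ terms with base indices $k+p+2n$, $s+q-2n$ and the next shift, with linear coefficients $(q-s)(p-k+2n)$ and $(q-s)(p-k+2m)$.

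I expect the main obstacle to be this last bookkeeping step. Tracking how $\Delta_{13}^m\Delta_{24}^n$ acts after two indices have merged means converting two-direction differences into one-direction differences. Concretely, $\Delta_{13}$ acting on the merged pair $(1,3)\to$ position $1$ relative to $\Delta_{24}$ becomes $S_{12}$-type shifts with offsets $2n$. The order-$(2(m+n)-1)$ comes from combining $m+m+n+n$ differences minus one absorbed by the linear coefficient. I would first verify the identity for $m=n=2$ by explicit expansion to fix signs and offsets, then run the general argument through a generating-function identity in $\mathbb{C}[\Delta_{ab}]$.
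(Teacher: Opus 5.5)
Your opening reduction matches the paper's: the left-hand side equals $D\bigl((\tau_{1234}+\tau_{3412})-(\tau_{1324}+\tau_{2413})\bigr)L(\alpha_k,\beta_s,\gamma_q,\eta_p)$, where $D=\Delta_{12}^m\Delta_{34}^n\Delta_{13}^m\Delta_{24}^n$ acts on the letters.

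The gap is your claim that every cubic contribution dies. Your mechanism works as follows: the merged index is invariant, the coefficient is linear in the shift variable, and $\Delta^{\ge 2}$ kills it. This only applies to a commutator of a pair $\{a,b\}$ for which $\Delta_{ab}$ actually occurs in $D$, namely $\{1,2\},\{3,4\},\{1,3\},\{2,4\}$. There is no $\Delta_{23}$ and no $\Delta_{14}$ in $D$.

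The cubic term you computed yourself, $(s-q)L\bigl(\alpha_k,(\beta+\gamma)_{s+q},\eta_p\bigr)$, is of exactly this kind. Let $a,i,j,b$ be the summation variables of $\Delta_{12},\Delta_{13},\Delta_{24},\Delta_{34}$. After the four summations the word depends only on $a+i$ and $j+b$, and the coefficient is $(s-q)+(a-i)+(b-j)$. The antisymmetric parts cancel, leaving $(s-q)$ times a full difference of orders $2m$ and $2n$ of cubic words, which does not vanish in general.

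Likewise, normal-ordering the $\tau_{3412}$ part into $\alpha\gamma\beta\eta$ forces you to commute $\alpha$ past $\eta$ in both $\gamma\eta\alpha\beta$ and $\beta\eta\alpha\gamma$. The resulting $[\eta,\alpha]$ terms also survive individually. These terms must cancel in pairs rather than be annihilated. The surviving two-letter word, which merges $\alpha$ with $\eta$ and $\beta$ with $\gamma$ (not the pair $(1,3)$ as your last paragraph suggests), comes precisely from that pairing.

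The missing idea is how to organize this pairing. The paper telescopes $\tau_{3412}-\tau_{2413}$ through adjacent transpositions $\tau_{3412}\to\tau_{4312}\to\tau_{4132}\to\tau_{4123}\to\tau_{4213}\to\tau_{2413}$. This chain never swaps $\alpha$ with $\eta$ and swaps $\beta,\gamma$ exactly once. The other four swaps are along pairs occurring in $D$, so they vanish by your mechanism. The two $[\beta,\gamma]$ pieces then combine as $\alpha[\beta,\gamma]\eta-\eta\alpha[\beta,\gamma]=[\alpha,\eta][\beta,\gamma]+\alpha[[\beta,\gamma],\eta]$.

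The nested term has a coefficient quadratic in $(j,b)$, so for $n=2$ no single $\Delta^n$ kills it. You need the fact that $\Delta_{24}^n\Delta_{34}^n$ annihilates every polynomial of total degree $<2n$ in two independent variables.

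Finally, the collapse to the right-hand side is a genuine computation, not just order counting. After $j\mapsto n-j$ and $b\mapsto n-b$, the coefficient of $S_{12}^{i+j+a+b}L\bigl((\alpha+\eta)_{k+p+2n},(\beta+\gamma)_{s+q-2n}\bigr)$ is $(P+u)(Q+w)$. Here $P=p-k+2n$, $Q=q-s$, $u=i+a-j-b$ and $w=i-a-j+b$.
\begin{itemize}
\item The $w$-moment vanishes by the symmetry $(i,j)\leftrightarrow(a,b)$.
\item The $uw$-moment also vanishes by that symmetry, since $uw=(i-j)^2-(a-b)^2$.
\item The $u$-moment equals $2(n-m)S_{12}(1-S_{12})^{2(m+n)-1}$.
\end{itemize}
Together these give $(q-s)(1-S_{12})^{2(m+n)-1}\bigl((p-k+2n)-(p-k+2m)S_{12}\bigr)$, which is the right-hand side. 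Your heuristic of one difference being absorbed by a linear coefficient misses that the coefficient is quadratic, and that its quadratic part has to be shown to cancel.
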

\begin{proof} 
By the   definition of  $\tau$, 
we compute the left‑hand side (LHS) of the equation:
\[
\begin{aligned}
\text{LHS}
&=\bigl(\tau_{1234}+\tau_{3412}\bigr)
\sum_{\substack{0\le i,a\le m,\\ 0\le j,b\le n}}
(-1)^{i+j+a+b}
\binom{m}{a}\binom{m}{i}\binom{n}{b}\binom{n}{j}\\[4pt]
&\quad\times \biggl(L\bigl(\alpha_{k-i-a},\beta_{s-j+a},\gamma_{q+i-b},\eta_{p+j+b}\bigr)-L\bigl(\alpha_{k-i-a},\gamma_{q-j+a},\beta_{s+i-b},\eta_{p+j+b}\bigr)\biggl).
\end{aligned}
\]
Using the properties of shift operators and index permutation maps, we obtain
\[
\begin{aligned}
\text{LHS}
&=\Delta_{12}^{m}\Delta_{34}^{n}\Delta_{13}^{m}\Delta_{24}^{n}
\Bigl(\bigl(\tau_{1234}+\tau_{3412}\bigr)-\bigl(\tau_{1324}+\tau_{2413}\bigr)\Bigr)
L(\alpha_k,\beta_s,\gamma_q,\eta_p)\\[4pt]
&=\Delta_{12}^{m}\Delta_{34}^{n}\Delta_{13}^{m}\Delta_{24}^{n}
\Bigr(
\bigl(\tau_{1234}-\tau_{1324}\bigr)
+\bigl(\tau_{4132}-\tau_{4123}\bigr)
+\bigl(\tau_{3412}-\tau_{4312}\bigr)\\[4pt]
&\quad+\bigl(\tau_{4312}-\tau_{4132}\bigr)
+\bigl(\tau_{4123}-\tau_{4213}\bigr)
+\bigl(\tau_{4213}-\tau_{2413}\bigr)
\Bigr)
L(\alpha_k,\beta_s,\gamma_q,\eta_p).
\end{aligned}
\]
Examining the last four terms in the above equation, we arrive at the following straightforward results: all of them vanish.
\begin{clai} 
$\Delta_{12}^m \Delta_{34}^n \Delta_{13}^m \Delta_{24}^n(\tau_{3412}-\tau_{4312})L(\alpha_k,\beta_s,\gamma_q,\eta_p)=0.$
\end{clai}
One can readily verify that
\[
\begin
{aligned}
&\Delta_{12}^m \Delta_{34}^n \Delta_{13}^m \Delta_{24}^n(\tau_{3412}-\tau_{4312})L(\alpha_k,\beta_s,\gamma_q,\eta_p)
\\=&\Delta_{12}^m \Delta_{34}^n \Delta_{13}^m \Delta_{24}^n[L(\gamma_q),L(\eta_p)]L(\alpha_k)L(\beta_s)
\\=&\Delta_{12}^m \Delta_{13}^m \Delta_{24}^n\sum_{b=0}^n(-1)^b\binom{n}{b}[L(\gamma_{q-b}),L(\eta_{p+b})]L(\alpha_k)L(\beta_s)
\\=&\Delta_{12}^m \Delta_{13}^m \Delta_{24}^n\sum_{b=0}^n(-1)^b\binom{n}{b}(q-p-2b)L\big((\gamma+\eta)_{q+p}\big)L(\alpha_k)L(\beta_s).
\end
{aligned}
\]
By the difference annihilation lemma,
$
\sum_{i=0}^{k}(-1)^{i}\binom{k}{i}f(i)=0
$
for any integers $k\geq2$ and ${\rm deg}(f)\leq k-1$, the claim follows when $n\geq2$.

By a similar method as in the proof of the claim, we have
\[
\begin
{aligned}
&\Delta_{12}^m \Delta_{34}^n \Delta_{13}^m \Delta_{24}^n
(\tau_{4312}-\tau_{4132})L(\alpha_k,\beta_s,\gamma_q,\eta_p)
\\=&\Delta_{12}^m \Delta_{34}^n \Delta_{13}^m \Delta_{24}^n(\tau_{4123}-\tau_{4213})L(\alpha_k,\beta_s,\gamma_q,\eta_p)
\\=&\Delta_{12}^m \Delta_{34}^n \Delta_{13}^m \Delta_{24}^n(\tau_{4213}-\tau_{2413})L(\alpha_k,\beta_s,\gamma_q,\eta_p)=0.
\end
{aligned}
\]
Therefore, we can rewrite ${\rm LHS}$  as follows: 
\[
\begin
{aligned}
{\rm LHS}=&\Delta_{12}^m \Delta_{34}^n \Delta_{13}^m \Delta_{24}^n\Big((\tau_{1234}-\tau_{1324})+(\tau_{4132}-\tau_{4123})\Big)L(\alpha_k,\beta_s,\gamma_q,\eta_p)
\\=&\Delta_{12}^m \Delta_{34}^n \Delta_{13}^m \Delta_{24}^n\Big([L(\alpha_k),L(\eta_p)][L(\beta_s),L(\gamma_q)]+L(\alpha_k)[[L(\beta_s),L(\gamma_q)],L(\eta_p)]\Big)
\\=&\Delta_{12}^m \Delta_{34}^n \Delta_{13}^m \Delta_{24}^n\Bigr([L(\alpha_k),L(\eta_p)][L(\beta_s),L(\gamma_q)]\Bigr)
\\&+\Delta_{12}^m \Delta_{13}^m \Delta_{24}^n\sum_{0\leq b\leq n}(-1)^b\binom{n}{b}\Bigr(L(\alpha_k)[[L(\beta_s),L(\gamma_{q-b})],L(\eta_{p+b})]\Bigr)
\end
{aligned}
\]
\[
\begin
{aligned}
=&\Delta_{12}^m \Delta_{34}^n \Delta_{13}^m \Delta_{24}^n\Bigr([L(\alpha_k),L(\eta_p)][L(\beta_s),L(\gamma_q)]\Bigr)
\\=&\sum_{\substack{0\leq i,a\leq m,\\ 0\leq j,b\leq n}}(-1)^{i+j+a+b}\binom{m}{a}\binom{n}{b}\binom{m}{i}\binom
{n}{j} \Bigr(p-k+i+j+a+b\Bigr)
\\&
\times\Bigr(q-s+i+j-a-b\Bigr)
L\Bigr((\alpha+\eta)_{k+p-i+j-a+b},(\beta+\gamma)_{s+q+i-j+a-b}\Bigr).
\end
{aligned}
\]
By performing the variable substitutions 
$j\mapsto n-j$ and $b\mapsto n-b$, we obtain 
\[
\begin
{aligned}
{\rm LHS}=&
\sum_{\substack{0\leq i,a\leq m,\\ 0\leq j,b\leq n}}(-1)^{i+j+a+b}\binom{m}{a}\binom{n}{b}\binom{m}{i}\binom
{n}{j}\Bigr(p-k+2n+(i+a)-(j+b)\Bigr
)
\\&
\times(q-s+i-j-a+b) L\Bigr((\alpha+\eta)_{k+p+2n-i-j-a-b},(\beta+\gamma)_{s+q-2n+i+j+a+b}\Bigr)
\\=&\sum_{\substack{0\leq i,a\leq m,\\ 0\leq j,b\leq n}}(-1)^{i+j+a+b}\binom{m}{a}\binom{n}{b}\binom{m}{i}\binom
{n}{j}\Bigr(p-k+2n+(i+a)-(j+b)\Bigr
)
\\&
\times\Bigr(q-s+i-j-a+b\Bigr) S_{12}^{i+j+a+b}L\Bigr((\alpha+\eta)_{k+p+2n},(\beta+\gamma)_{s+q-2n}\Bigr).
\end{aligned}
\]
We now focus on the coefficient of  $L\Bigr((\alpha+\eta)_{k+p+2n},(\beta+\gamma)_{s+q-2n}\Bigr)$  in the above equation, namely,
\[
\begin
{aligned}
&\sum_{\substack{0\leq i,a\leq m,\\ 0\leq j,b\leq n}}(-1)^{i+j+a+b}\binom{m}{a}\binom{n}{b}\binom{m}{i}\binom
{n}{j}\Bigl(p-k+2n+(i+a)-(j+b)\Bigr
)
\\&
\times\Bigr(q-s+i-a-j+b\Bigr) S_{12}^{i+j+a+b}
\\=&(q-s)\Bigl((p-k+2n)(1-S_{12})^{2(m+n)}
  +2(1-S_{12})^{2n+m}S_{12}\frac{d}{dS_{12}}(1-S_{12})^{m}
  \\&-2(1-S_{12})^{2m+n}S_{12}\frac{d}{dS_{12}}(1-S_{12})^{n} \Bigl)\\
=
  &(q-s)\Bigl((p-k+2n)(1-S_{12})^{2(m+n)}-2mS_{12}(1-S_{12})^{2(m+n)-1}+2nS_{12}(1-S_{12})^{2(m+n)-1} \Bigl)
  \\
=
 &(q-s)\Bigl((p-k+2n)\Delta^{2(m+n)-1}-(p-k+2m)S_{12}\Delta^{2(m+n)-1}\Bigl).
\end{aligned}
\]
It is straightforward to verify that
\[
\begin
{aligned}
{\rm LHS}=&
(q-s)\Bigl((p-k+2n)\Delta^{2(m+n)-1}-(p-k+2m)S_{12}\Delta^{2(m+n)-1}\Bigl)
\\&\times L\Bigr((\alpha+\eta)_{k+p+2n},(\beta+\gamma)_{s+q-2n}\Bigr)
={\rm RHS}.
\end{aligned}
\]
This completes the proof.
\end{proof}

For brevity, write $H(\alpha_r)=H_{r,\alpha_r}$ for any $r\in\mathbb{Z}+\frac{1}{2}$ and $\alpha_r\in\mathbb{Z}$.
\begin{lemm}\label{lemm3.10}
Let $V$ be a uniformly bounded $\bar{\mathcal{L}}$-module with a composition series of length $h\in\mathbb{Z}_+$. Then, for every $h$,
\begin{enumerate}
    \item[\rm (i)] there exists $l\in \mathbb{N}$ such that, for all
$k,s,\alpha_i,\beta_i\in\Z$ satisfying $(\alpha+\beta)_{i+j}=\alpha_{i}+\beta_{j}$, the operator $\Omega_{k,s,\alpha,\beta}^{(l)}$ annihilates $V$;
    \item[\rm (ii)] there exists $l\in \mathbb{N}$ such that, for all
$r\in\mathbb{Z}+\frac{1}{2},k,\gamma_r,\beta_i\in\Z$ satisfying $(\gamma+\beta)_{r+i}=\gamma_{r}+\beta_{i}$, the operator $$\Psi_{r,k,\alpha,\beta}^{(l)}=\Delta_{12}^{l}\big(H(\gamma_{r})L(\beta_{k})\big)\in U(\bar{\mathcal{L}})$$ annihilates $V$.
\end{enumerate}
\end{lemm}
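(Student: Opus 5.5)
Induction on the composition length $h$ will handle both parts at once.

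\emph{Base case $h=1$.} Here $V$ is a uniformly bounded simple $\bar{\mathcal{L}}$-module. The key difficulty is that $V$ is not yet known to be an evaluation module; the $\mathcal{A}$-cover is being built precisely to show this.

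The first thing to try is the Billig--Futorny route. Restrict $V$ to the loop Virasoro subalgebra $\bar{\mathcal V}$. This restriction is still uniformly bounded, with a composition series whose length is bounded by a fixed multiple of $\max_\lambda\dim V_\lambda$. By Theorem \ref{thm:2.3}, each uniformly bounded simple $\bar{\mathcal V}$-subquotient is an evaluation module $\bar F_{a,b}(\lambda)$, possibly trivial. The computation in Lemma \ref{lem3.7}, done on $F_{a,b}(\lambda)$, shows that $\Omega^{(3)}_{k,s,\alpha,\beta}$ kills every such subquotient. The trivial subquotients are killed as well, since they are one-dimensional and the algebra acts by zero on them.

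So part (i) reduces to the following claim for a $\bar{\mathcal V}$-module $M$ with composition length $h'$:
\[
\text{if } \Omega^{(l)} \text{ kills } N \text{ and } M/N \text{ for all indices, then } \Omega^{(l')} \text{ kills } M \text{ for some } l' \text{ depending only on } l.
\]
This is exactly where Theorem \ref{thm:3.9} enters. Suppose $\Omega^{(m)}$ maps $M$ into $N$ and maps $N$ to $0$, uniformly in all indices. Then every product $\Omega^{(m)}\Omega^{(n)}$ kills $M$. Hence the left-hand side of Theorem \ref{thm:3.9}, a combination of anticommutators of such products, kills $M$.

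On the right-hand side, choose $q\neq s$ and vary $p$, keeping the combined loop indices $(\alpha+\eta),(\beta+\gamma)$ arbitrary. Vary $p$ through two values and take the appropriate linear combination, as in Billig--Futorny. This isolates both $\Omega^{(2(m+n)-1)}_{K,S}$ and the shifted $\Omega^{(2(m+n)-1)}_{K-1,S+1}$. By Example \ref{lem:recursion-Omega}, their difference is $\Omega^{(2(m+n))}$. In fact the single term with the coefficient $(p-k+2n)$ already suffices after eliminating the other term. This is legitimate because the indices $k,p$ enter both the coefficients and the subscripts only through $k+p$ and $p-k$, so they can be solved for independently.

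The conclusion is that $\Omega^{(l')}$ annihilates $M$ with $l'=4l-1$ (or $4l$), for arbitrary index data. This is possible because the hypothesis $(f+g)_{i+j}=f_i+g_j$ lets any prescribed loop-index function on the right be realized by suitable $\alpha,\beta,\gamma,\eta$. Iterating over the composition series of $V$ restricted to $\bar{\mathcal V}$ proves (i). For the inductive step in $h$, the same argument is applied to $0\to N\to V\to V/N\to 0$ with $N$ simple.

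\emph{Part (ii).} The $H$-part is handled by commutator with $L$. Let $l$ be such that $\Omega^{(l)}$ kills $V$. Fix indices and commute $\Omega^{(l)}_{k,s,\alpha,\beta}$ with $H(\gamma_r)$. Using $[L_{m,i},H_{r,j}]=-rH_{m+r,i+j}$, we get
\[
\bigl[H(\gamma_r),\Delta_{12}^l L(\alpha_k,\beta_s)\bigr]
=\Delta_{12}^l\Bigl((k+r)\,H\bigl((\gamma+\alpha)_{r+k}\bigr)L(\beta_s)+\text{(terms } L(\alpha)H)\Bigr),
\]
and this annihilates $V$. Reorder the $L(\alpha)H$ terms into the form $HL$, at the cost of $H$-linear terms whose coefficients are polynomial in the shift index. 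Then apply one further $\Delta_{12}$ twice, i.e.\ pass from $\Delta_{12}^l$ to $\Delta^{l+2}_{12}$ via Example \ref{lem:recursion-Omega}. This kills the linear-in-$i$ coefficient, exactly as in the difference annihilation lemma. The outcome is that $\Psi^{(l+2)}_{r,k}$ annihilates $V$. Alternatively, one can again use the simple-subquotient computation on $F_{a,b,c}(\lambda)$, where $\Delta^2_{12}(H_{r-i}L_{k+i})$ vanishes since the coefficient $(a+kb+ib-p)$ is linear in $i$, and then run the same extension argument.

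\emph{Main obstacle.} I expect the extension step to be hardest: extracting a pure $\Omega^{(N)}$ from the right-hand side of Theorem \ref{thm:3.9} while tracking the loop indices.
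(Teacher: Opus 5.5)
Your part (i) is sound and follows the paper's route: products of annihilating $\Omega$'s kill the extension, and then Theorem~\ref{thm:3.9} plus the recursion extract a single $\Omega$. Restricting to $\bar{\mathcal V}$ and invoking Theorem~\ref{thm:2.3} for the base case makes explicit what the paper leaves implicit when it cites Lemma~\ref{lem3.7} for ``every uniformly bounded simple module''. Your two-value elimination in $p-k$ (determinant proportional to $(n-m)(d_1-d_2)$, with $m=n$ giving $\Omega^{(4m)}$ directly) also avoids the parity restriction hidden in the paper's choice $p-k=-2m$.

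The gap is in part (ii). The bracket is $[H_{r,j},L_{m,i}]=rH_{m+r,i+j}$, so, suppressing loop indices,
\[
\bigl[H(\gamma_r),\Delta_{12}^{l}L(\alpha_k,\beta_s)\bigr]=r\sum_{i=0}^{l}(-1)^i\binom{l}{i}\Bigl(H_{r+k-i}L_{s+i}+L_{k-i}H_{r+s+i}\Bigr).
\]
The coefficient is the constant $r$, not $(k+r)$. Reordering the second family does produce $H$-linear terms $-(r+s+i)H_{r+s+k}$, and these die for $l\ge 2$. What survives, however, is $\sum_i(-1)^i\binom{l}{i}H_{r+s+i}L_{k-i}=(-1)^l\Psi^{(l)}_{r+s+l,\,k-l}$. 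This is a second genuine $\Psi$-sum, not a lower-order correction. So you only obtain the two-term relation $\Psi^{(l)}_{r+k,s}+(-1)^l\Psi^{(l)}_{r+s+l,k-l}\in\operatorname{Ann}(V)$. Passing to $\Delta_{12}^{l+2}$ via Example~\ref{lem:recursion-Omega} shifts only the two $L$-indices while $H(\gamma_r)$ stays fixed. It therefore reproduces a relation of exactly the same two-term shape at level $l+2$. Nothing is separated, and ``$\Psi^{(l+2)}$ annihilates $V$'' does not follow.

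The missing idea is to move the index of $H$ as well, using that the bracket coefficient \emph{is} the $H$-index. The paper forms
\[
\sum_{0\le i\le 2,\ 0\le a\le 1}(-1)^{i+a}\binom{2}{i}\binom{1}{a}\Bigl[H(\gamma_{\frac12-i-a}),\Omega^{(m)}_{k+a,p+i,\alpha,\beta}\Bigr].
\]
In the $HL$-family the operator $H_{k+\frac12-i-j}L_{p+i+j}$ is independent of $a$, and the $a$-difference of the coefficient $\frac12-i-a$ equals $1$; this leaves $\Delta_{12}^{m+2}\bigl(H_{k+\frac12}L_p\bigr)$. In the $LH$-family the operator $L_{k+a-j}H_{p+j+\frac12-a}$ is independent of $i$ while its coefficient is linear in $i$, so the second difference in $i$ kills it.

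Your reordering route can also be rescued by varying $r$ with $r+k=A$ fixed. This gives $\Psi^{(l)}_{A,B}\equiv-(-1)^l\Psi^{(l)}_{A+B-k',k'}$ modulo $\operatorname{Ann}(V)$ for every $k'$, whence $\Psi^{(l+1)}$ annihilates $V$. Again, the freedom in the $H$-index is essential.

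Your fallback, computing on $F_{a,b,c}(\lambda)$ and ``running the same extension argument'', is not available. At this stage simple cuspidal $\bar{\mathcal L}$-modules are not yet known to be $F_{a,b,c}(\lambda)$: this lemma feeds Lemma~\ref{lem:3.11} and Theorem~\ref{thm:3.13}, as you noted yourself for the base case. Moreover, there is no analogue of Theorem~\ref{thm:3.9} that turns products of $\Psi$'s into a single $\Psi$.
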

\begin{proof}
(i) Let $V$ be a uniformly bounded $\bar{\mathcal {L}}$-module with a composition series of length $h$:
\[
0=V_0\subset V_1\subset V_2\subset\cdots\subset V_h=V,
\]
where each quotient $V_{i+1}/V_i$ is simple. We proceed by induction on the composition length $h$.

Nontrivial highest weight or lowest weight $\bar{\mathcal {L}}$-modules cannot be uniformly bounded. Therefore, the simple composition factors of $V$ can only be of the form $\bar{F}_{a,b}(\lambda)$.

Base case: $h=1$. Then $V$ itself is a uniformly bounded simple module. By Lemma \ref{lem3.7}, $\Omega_{k,s,\alpha,\beta}^{(3)}$ annihilates every uniformly bounded simple module, so statement (i) holds for $h=1$.

Inductive step: Suppose that $h>1$. Write $h=h_1+h_2$, where $h_1,h_2\ge 1$ are integers. By the induction hypothesis, there exist positive integers $m,n\in\mathbb Z_+$ such that $\Omega_{k,s,\alpha,\beta}^{(m)}$ annihilates all uniformly bounded modules of length $h_1$, and $\Omega_{p,q,\gamma,\eta}^{(n)}$ annihilates all uniformly bounded modules of length $h_2$.

Consider the products $\Omega_{k,s,\alpha,\beta}^{(m)}\Omega_{p,q,\gamma,\eta}^{(n)}$ and $\Omega_{p,q,\gamma,\eta}^{(n)}\Omega_{k,s,\alpha,\beta}^{(m)}$. Both products annihilate $V$. Indeed,
\[
\Omega_{p,q,\gamma,\eta}^{(n)} V\subset V_{h_1}.
\]
Since $\Omega_{p,q,\gamma,\eta}^{(n)}\in\operatorname{Ann}(V/V_{h_1})$, it maps the entire module $V$ into the submodule $V_{h_1}$. The quotient $V/V_{h_1}$ has composition series length $h_2$, whereas $V_{h_1}$ has length $h_1$. As $\Omega_{k,s,\alpha,\beta}^{(m)}$ annihilates $V_{h_1}$, it follows that
\[
\Omega_{k,s,\alpha,\beta}^{(m)}\Omega_{p,q,\gamma,\eta}^{(n)} V\subset \Omega_{k,s,\alpha,\beta}^{(m)} V_{h_1}=0.
\]
By symmetry, $\Omega_{p,q,\gamma,\eta}^{(n)}\Omega_{k,s,\alpha,\beta}^{(m)}V=0$. 

If $m=n$, Theorem \ref{thm:3.9} and the recursive property of $\Omega_{k,s,\alpha,\beta}^{(m)}$ imply that $\Omega_{k,s,\alpha+\gamma,\beta+\eta}^{(4m)}\in\operatorname{Ann}(V)$. If $m\neq n$, then
\[
(p-k+2n)\Omega_{k+p+2n,s+q-2n}^{(2m+2n-1)}-(p-k+2m)\Omega_{k+p+2n-1,s+q-2n+1}^{(2m+2n-1)}
\]
annihilates $V$ for all $p,k,s,q$. We may vary $p-k=-2m$ while keeping $p+k$ fixed. This implies that $\Omega_{k+p+2n,s+q-2n}^{(2m+2n-1)}\in\operatorname{Ann}(V)$, completing the proof of (i).

(ii) Let $m\geq2$.  From ${\rm (i)}$,  we immediately get $[H(\gamma_r),\Omega_{k,p,\alpha,\beta}^{(m)}]V=0$  for all $k,p,\alpha_i,\beta_i,\gamma_r\in\mathbb{Z}, r\in \mathbb{Z}+\frac{1}{2}$. Thus, on $V$ we have
\begin{align*}
0=&\sum_{\substack{0\leq i\leq 2,\\ 0\leq a\leq 1}}(-1)^{i+a}\binom{2}{i}\binom{1}{a}\Bigl([H(\gamma_{\frac{1}{2}-i-a}),\Omega_{k+a,p+i,\alpha,\beta}^{(m)}]\Bigr)V
\\=& \sum_{\substack{0\leq i\leq 2,\\ 0\leq a\leq 1,\\ 0\leq j\leq m}}(-1)^{i+a+j}\binom{2}{i}\binom{1}{a}\binom{m}{j}\Bigl([H(\gamma_{\frac{1}{2}-i-a}),L(\alpha_{k+a-j},\beta_{p+i+j})]\Bigr)V
\\=& \sum_{\substack{0\leq i\leq 2,\\ 0\leq a\leq 1,\\ 0\leq j\leq m}}(-1)^{i+a+j}\binom{2}{i}\binom{1}{a}\binom{m}{j}\Bigl((\frac{1}{2}-i-a)H\big((\alpha+\gamma)_{\frac{1}{2}+k-i-j}\big)L(\beta_{p+i+j})
\\&+(\frac{1}{2}-i-a)L(\alpha_{k+a-j})H\big((\beta+\gamma)_{p+j+\frac{1}{2}-a}\big)\Bigr)V
\\=& \sum_{\substack{0\leq i\leq 2,\\ 0\leq a\leq 1}}(-1)^{i+a}\binom{2}{i}\binom{1}{a}\Bigl((\frac{1}{2}-i-a)S_{12}^{i}\Delta_{12}^mH\big((\alpha+\gamma)_{\frac{1}{2}+k}\big)L(\beta_{p})
\\&+(\frac{1}{2}-i-a)\Delta_{12}^mL(\alpha_{k+a})H\big((\beta+\gamma)_{p+\frac{1}{2}-a}\big)\Bigr)V
\\=&\Delta_{12}^{m+2}H\big((\alpha+\gamma)_{\frac{1}{2}+k}\big)L(\beta_{p})V+\sum_{0\leq i\leq 2}(-1)^{i}\binom{2}{i}(\frac{1}{2}-i)\Delta_{12}^mL(\alpha_{k})H\big((\beta+\gamma)_{p+\frac{1}{2}}\big)V
\\&+\sum_{0\leq i\leq 2}(-1)^{i}\binom{2}{i}(\frac{1}{2}+i)\Delta_{12}^mL(\alpha_{k+1})H\big((\beta+\gamma)_{p-\frac{1}{2}}\big)V
=\Psi_{k+\frac{1}{2},p,\alpha+\gamma,\beta}^{(m+2)}V.
\end{align*}
\end{proof}

\subsubsection{Classification of uniformly bounded \texorpdfstring{$\bar{\mathcal{L}}$-}-modules}
We recall the definitions of coinduced modules and $\mathcal{A}$‑covers (see \cite{BF1}).

\begin{defi}
A coinduced module from an $\bar{\mathcal{L}}$‑module $V$ is the space $\mathrm{Hom}(\mathcal{A},V)$, equipped with actions of $\bar{\mathcal{L}}$ and $\mathcal{A}$ defined as follows:
\begin{align*}
(\mathfrak{a}\varphi)(f)=\mathfrak{a}\bigl(\varphi(f)\bigr)-\varphi\bigl(\mathfrak{a}(f)\bigr),
\qquad
(y\varphi)(f)=\varphi(yf),
\end{align*}
where $\varphi\in\mathrm{Hom}(\mathcal{A},V)$, $\mathfrak{a}\in \bar{\mathcal{L}}$, and $f,y\in \mathcal{A}$.
\end{defi}

\begin{defi}\label{def5777}
An $\mathcal{A}$‑cover of a uniformly bounded $\bar{\mathcal{L}}$‑module $V$ is the $\mathcal{A}\bar{\mathcal{L}}$‑submodule
\[
\widehat{V}=\mathrm{span}\Bigl\{\phi(\mathfrak{a},w) \,\Big\vert\, \mathfrak{a}\in\bar{\mathcal{L}},\;w\in V\Bigr\}\subset\mathrm{Hom}(\mathcal{A},V),
\]
where the map $\phi(\mathfrak{a},w)\colon \mathcal{A}\to V$ is defined by
\[
\phi(\mathfrak{a},w)(f)=(f\mathfrak{a})(w).
\]
\end{defi}
The $\mathcal{A}\bar{\mathcal{L}}$‑action on $\widehat{V}$ is defined by
\begin{align*}
\mathfrak{b}\phi(\mathfrak{a},w)&=\phi\bigl([\mathfrak{b},\mathfrak{a}],w\bigr)+\phi(\mathfrak{a},\mathfrak{b}w),\\
f\phi(\mathfrak{a},w)&=\phi(f\mathfrak{a},w)
\end{align*}
for all $\mathfrak{a},\mathfrak{b}\in \bar{\mathcal{L}}$, $w\in V$, and $f\in \mathcal{A}$.

Define
\[
\mathcal{K}(V)=\left\{\sum_{r\in\frac{1}{2}\mathbb Z}\mathfrak{a}_r\otimes w_{r}\in\bar{\mathcal{L}}\otimes V
\,\bigg\vert\, \sum_{r\in\frac{1}{2}\mathbb Z}(f\mathfrak{a}_r) w_{r}=0\quad \forall\, f\in\mathcal{A} \right\}.
\]
Then $\mathcal{K}(V)$ is an $\mathcal{A}\bar{\mathcal{L}}$‑submodule of $\bar{\mathcal{L}}\otimes V$. Under the assumption $\bar{\mathcal{L}}V=V$, the $\mathcal{A}$‑cover $\widehat{V}$ can also be realized as the quotient $\mathcal{A}\bar{\mathcal{L}}$‑module
\[
(\bar{\mathcal{L}}\otimes V)/\mathcal{K}(V).
\]
Clearly, the linear map
\begin{align*}
\sigma\colon\quad \widehat{V} &\longrightarrow \bar{\mathcal{L}}V\\
\mathfrak{a}\otimes w+\mathcal{K}(V) &\longmapsto \mathfrak{a}w
\end{align*}
is an $\bar{\mathcal{L}}$‑module epimorphism.

\begin{lemm}\label{lem:3.11}
Let $V$ be a uniformly bounded module over $\bar{\mathcal{L}}$. Then the $\A$-cover $\widehat{V}$ of $V$ is also  a uniformly bounded $\A\bar{\mathcal{L}}$-module.
\end{lemm}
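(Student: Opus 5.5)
Since $\widehat{V}$ is an $\mathcal{A}\bar{\mathcal{L}}$-module, the discussion preceding Theorem~\ref{thm:3.2} shows it suffices to exhibit a single finite-dimensional weight space of $\widehat{V}$. Homogeneous elements of $\mathcal{A}$ are invertible, so all weight spaces then have the same dimension and $\widehat{V}$ is uniformly bounded. I will follow the Billig--Futorny strategy: realize $\widehat{V}$ as $(\bar{\mathcal{L}}\otimes V)/\mathcal{K}(V)$, or more generally as the span of the maps $\phi(\mathfrak{a},w)$, and use the annihilators from Lemma~\ref{lemm3.10} to show that every element of a fixed weight space $\widehat{V}_\mu$ is a combination of $\phi(\mathfrak{a},w)$ with $\mathfrak{a}$ and $w$ ranging over a finite set of root and weight indices.

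First reduce to finite composition length. Weight spaces of $V$ have dimension $<K$. A fixed weight space $\widehat{V}_\mu$ is spanned by the elements $\phi(L_{m,i},w)$ with $w\in V_{\mu-m}$ and $\phi(H_{r,i},w)$ with $w\in V_{\mu-r}$. Moreover, $\phi(\mathfrak{a},w)$ is determined by its values on $\mathcal{A}$, and these values involve only finitely many weight spaces in $\mathrm{Supp}(V)\cap(\mu+\tfrac12\mathbb{Z})$. So I may assume $V$ lies in a single coset $\mu+\tfrac12\mathbb{Z}$. A standard argument then shows that such a uniformly bounded module has finite composition length, bounded in terms of $K$; by passing to subquotients, Lemma~\ref{lemm3.10} then applies. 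This yields an $l$ such that all $\Omega^{(l)}_{k,s,\alpha,\beta}$ and $\Psi^{(l)}_{r,k,\gamma,\beta}$ annihilate $V$.

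The key step is a spanning argument. Apply $\Omega^{(l)}_{k,s,\alpha,\beta}$ to $w$ and pair it with $f=z^{p}\otimes t^{j}$, which multiplies the first factor. This shows that in $\widehat{V}$ the element $\phi(L_{k,\alpha_k},L_{s,\beta_s}w)$ is a linear combination of $\phi(L_{k-i,\alpha_{k-i}},L_{s+i,\beta_{s+i}}w)$ for $1\le i\le l$. The identity must hold for every $f\in\mathcal{A}$, and this is exactly what the freedom in the indices $\alpha,\beta$ and the additivity $(\alpha+\beta)_{i+j}=\alpha_i+\beta_j$ provide: $f\mathfrak{a}$ shifts both the $z$-degree and the $t$-degree. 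Using this recursion in the Virasoro index, together with $\Psi^{(l)}$ for the $H$-generators, every $\phi(\mathfrak{a}_m,u)$ with $\mathfrak{a}_m$ of degree $m$ and $u\in V_{\mu-m}$ can be pushed into the window $|m|\le l$, provided $u$ lies in $\bar{\mathcal{L}}V$, i.e.\ $u=L_{s,\beta}w$ or $u=H_{r,\beta}w$. Vectors outside $\bar{\mathcal{L}}V$ contribute only through $V/\bar{\mathcal{L}}V$, which is trivial and can be handled separately; one can also assume $\bar{\mathcal{L}}V=V$ after passing to $\bar{\mathcal{L}}V$.

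The main obstacle is the loop index. Even with $|m|\le l$, the $t$-degree $i$ in $L_{m,i}$ ranges over all of $\mathbb{Z}$, which gives infinitely many candidates. To handle it, I would use the $t$-components of the shifts. Choosing $\alpha_{k-i}$ and $\beta_{s+i}$ so that the $t$-degree moves between the two factors, as the index $(\alpha+\beta)$ bookkeeping allows, the same relations transport the loop degree from $\mathfrak{a}$ onto the vector $w$. Then $\phi(L_{m,i},u)$ is expressed through $\phi(L_{m',i'},u')$ with $i'$ in a bounded range. The reason is that in $\widehat{V}$ only the product $f\mathfrak{a}$ matters, and $1\otimes t$ acts invertibly on $\widehat{V}$ while preserving weight. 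Concretely, I expect to show that $\widehat{V}_\mu$ is spanned by $\phi(\mathfrak{a},w)$ with $\mathfrak{a}$ in the finite-dimensional space $\mathrm{span}\{L_{m,0},H_{r,0}:|m|,|r|\le l\}$ and $w$ in the finitely many weight spaces $V_{\mu-m}$, modulo the $\mathbb{C}[t^{\pm1}]$-action. Making this last quotient rigorous, so that it really gives finite dimensionality and not just finite rank over $\mathbb{C}[t^{\pm1}]$, is the delicate point. If needed, I would first find a weight space of $\widehat{V}$ that is finitely generated over $\mathbb{C}[t^{\pm1}]$. I would then use the weight space $V_{\mu'}$ together with Schur-type eigenvalue arguments (as in Theorem~\ref{thm:3.2}) on simple subquotients to show that a polynomial in $t$ acts as zero, which yields finite dimensionality.
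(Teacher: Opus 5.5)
Your set-up is the paper's: you put the strings from Lemma~\ref{lemm3.10} into $\mathcal{K}(V)$ and run a recursion that pushes the Virasoro index into a window. (The paper writes $u=L_0v$ and uses strings whose second factor is $L_{i,0}$.) You are right that the loop degree is the real difficulty, but the proposal does not overcome it, so there is a genuine gap.

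\emph{The fallback is circular.} The slope-$0$ recursion keeps the loop degree of the first factor constant along the string. So it only shows that $\widehat V_\mu$ is finitely generated over $\mathbb{C}[t^{\pm1}]$. Since $1\otimes t$ acts invertibly and preserves weights, this bounds nothing unless $\widehat V_\mu$ is torsion. But $P(t)\widehat V=0$ amounts to $(\bar{\mathcal L}\otimes P(t)\mathbb{C}[t^{\pm1}])V=0$, i.e.\ a cofinite annihilating ideal for $V$. Your route to this is Schur-type arguments on simple subquotients, as in Theorem~\ref{thm:3.2}. That presupposes either that $\widehat V$ has finite-dimensional weight spaces, which is the statement being proved, or that the simple subquotients of $V$ are evaluation modules. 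The latter is Theorem~\ref{thm:3.13}, which is deduced from this lemma.

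\emph{The index choice does not remove the loop degree.} A clever choice of $\alpha_{k-i},\beta_{s+i}$ cannot strip the loop degree off $\mathfrak a$. The condition $(\alpha+\beta)_{i+j}=\alpha_i+\beta_j$ forces $\alpha_j=c_1+dj$ and $\beta_j=c_2+dj$ with a common slope $d$. Hence in one string the first factors carry loop degrees $a,a-d,\dots,a-ld$. Moving along a single slope shifts the Virasoro index and the loop degree together, so it does not confine both.

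\emph{The missing step: use two slopes at once.} Fix $v\in V$ and $(K,A)\in\mathbb{Z}^2$, and set $g(k,a)=\phi(L_{k,a},L_{K-k,A-a}v)$. Apply Lemma~\ref{lemm3.10}(i) with $\alpha_j=a-dk+dj$ and $\beta_j=A-a-d(K-k)+dj$. Multiplying by $f\in\mathcal{A}$ stays inside this family, so the string lies in $\mathcal{K}(V)$. This gives $\sum_{i=0}^{l}(-1)^i\binom{l}{i}g(k-i,a-di)=0$ for all $k,a\in\mathbb{Z}$ and every $d\in\mathbb{Z}$.
\begin{itemize}
\item The case $d=0$ gives $g(k,a)=\sum_{j<l}\binom{k}{j}c_j(a)$.
\item Substituting this into the case $d=1$ and comparing coefficients of $k^{l-1},k^{l-2},\dots$ shows in turn that $c_{l-1},c_{l-2},\dots$ are polynomials in $a$ of degree bounded in terms of $l$.
\item Hence $g$ is determined by its values on a fixed finite box $B=\{0,\dots,l-1\}\times\{0,\dots,N\}$.
\end{itemize}
For $u\in V_{\mu-k}$ of nonzero weight, write $u=L_{0,0}v$ and take $(K,A)=(k,a)$. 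Then $\phi(L_{k,a},u)=g(k,a)\in\sum_{(k',a')\in B}\phi(L_{k',a'},V_{\mu-k'})$, which is finite-dimensional. The same argument with $\Psi^{(l)}$ handles the $H$-part.

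The paper's written proof also runs only the slope-$0$ recursion; its $R$ is finite-dimensional only for a fixed loop degree. So it does not supply this step either.
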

\begin{proof}
The assertion is evident if $V$ is trivial. We therefore assume that $V$ is nontrivial. Under this assumption, $\bar{\mathcal{L}}V = V$, and the support satisfies $\operatorname{Supp}(V)\subseteq \lambda+\tfrac12\mathbb{Z}$. Suppose further that $\dim V_\theta\le n$ for every weight $\theta\in\operatorname{Supp}(V)$. By Lemma~\ref{lemm3.10}, there exists a positive integer $m\in\mathbb{N}$ such that
\[
\Delta_{12}^{m}L(\alpha_k,\beta_p)v
=\Delta_{12}^{m}H(\alpha_{k+\frac12})L(\beta_p)v=0
\]
for all $k,p\in\mathbb{Z}$ and all $v\in V$. Consequently,
\begin{equation}\label{eq:3.4}
\Delta_{12}^{m}L(\alpha_k,\beta_p)v
, \quad \Delta_{12}^{m}H(\alpha_{k+\frac12})L(\beta_p)v\in\mathcal{K}(V).
\end{equation}

Define the finite dimensional subspace
\[
R=\operatorname{span}\bigl\{L(\alpha_i),\,H(\beta_{i+\frac12})\,\big|\,0\le i\le m\bigr\}.
\]
Then $\dim R = 4(m+1)$. Moreover, $R\otimes V$ is a $\mathbb{C}L_0$-submodule of $\bar{\mathcal{L}}\otimes V$, and its weight space dimensions satisfy
\[
\dim(R\otimes V)_\theta \le 4(m+1)n,\qquad \forall\ \theta\in\lambda+\tfrac12\mathbb{Z}.
\]
Our goal is to establish the decomposition
\[
\bar{\mathcal{L}}\otimes V = R\otimes V+\mathcal{K}(V),
\]
which immediately implies that the associated covering module $\widehat{V}$ is uniformly bounded.

To this end, we proceed by induction on $n$ to show that, for any weight vector $u\in V_\theta$,
\[
L(\alpha_n)\otimes u,\qquad H(\beta_{n+\frac12})\otimes u \;\in R\otimes V.
\]
We prove only the case $n>m$, as the proof for $n<0$ is similar.
Since $L_0$ acts on the weight space $V_\theta$ as multiplication by a nonzero scalar, we may write $u = L_0v$ for some $v\in V_\theta$. Applying relation~\eqref{eq:3.4} together with the inductive hypothesis, we obtain
\begin{align*}
L(\alpha_n)\otimes L_0v
&=\Delta_{12}^{m}\big(L(\alpha_{n-i},0_i)\big)v
-\sum_{i=1}^{m}(-1)^{i}\binom{m}{i}L(\alpha_{n-i},0_i)v,\\[4pt]
H\big(\beta_{n+\frac12}\big)\otimes L_0v
&=\Delta_{12}^{m}\big(H(\beta_{n-i+\frac12})L(0_i)\big)v
-\sum_{i=1}^{m}(-1)^{i}\binom{m}{i}H(\alpha_{n-i+\frac12})L(0_i)v.
\end{align*}
Both expressions belong to $R\otimes V+\mathcal{K}(V)$. This completes the inductive step and proves the lemma.
\end{proof}

\begin{remark}
By Lemma~\ref{lem:3.11}, uniformly bounded $\bar{\mathcal{L}}$‑modules and uniformly bounded $\mathcal{A}\bar{\mathcal{L}}$‑modules are identical as $\bar{\mathcal{L}}$‑modules, disregarding the extra associative algebra action of $\mathcal{A}$.
\end{remark}

 Now we give a classification for all  uniformly bounded simple $\bar{\mathcal{L}}$-modules.
\begin{theo}\label{thm:3.13}
Any  uniformly bounded simple $\bar{\mathcal{L}}$-module is isomorphic to a module of intermediate series $V\cong F_{a,b,c}(\lambda)$ or $V\cong \bar{F}_{a,b}(\lambda)$ for some $a,b\in\mathbb{C}$, $c\in\mathbb{C}^*$.
\end{theo}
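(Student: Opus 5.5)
The plan is the standard $\mathcal{A}$-cover argument of Billig--Futorny. Let $V$ be a uniformly bounded simple $\bar{\mathcal{L}}$-module; since $V$ is nontrivial and simple, $\bar{\mathcal{L}}V=V$ and $\operatorname{Supp}(V)\subseteq\lambda+\frac12\mathbb{Z}$ for some $\lambda$. We form the $\mathcal{A}$-cover $\widehat{V}=(\bar{\mathcal{L}}\otimes V)/\mathcal{K}(V)$. By Lemma~\ref{lem:3.11}, $\widehat{V}$ is a uniformly bounded $\mathcal{A}\bar{\mathcal{L}}$-module. The map $\sigma\colon\widehat{V}\to\bar{\mathcal{L}}V=V$ is a surjective $\bar{\mathcal{L}}$-module homomorphism, and it is nonzero because $V$ is nontrivial.

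The next step is to get a simple $\mathcal{A}\bar{\mathcal{L}}$-module mapping onto $V$. Since $\widehat{V}$ is a weight $\mathcal{A}$-module in which nonzero homogeneous elements of $\mathcal{A}$ act invertibly, all its weight spaces have the same finite dimension. Hence any chain of $\mathcal{A}\bar{\mathcal{L}}$-submodules is controlled by the dimension of a single weight space $\widehat{V}_\theta$, so $\widehat{V}$ has a finite composition series as an $\mathcal{A}\bar{\mathcal{L}}$-module. Choose a composition series
\[
0=\widehat{V}_0\subset\widehat{V}_1\subset\cdots\subset\widehat{V}_\ell=\widehat{V}
\]
and take the smallest $j$ with $\sigma(\widehat{V}_j)\neq0$. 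Simplicity of $V$ gives $\sigma(\widehat{V}_j)=V$ and $\sigma(\widehat{V}_{j-1})=0$. So $\sigma$ induces a surjection of $\bar{\mathcal{L}}$-modules from the simple $\mathcal{A}\bar{\mathcal{L}}$-module $\widehat{V}_j/\widehat{V}_{j-1}$ onto $V$. This quotient is a uniformly bounded simple $\mathcal{A}\bar{\mathcal{L}}$-module, so by Theorem~\ref{thm:3.4} it is isomorphic as an $\bar{\mathcal{L}}$-module to some $F_{a,b,c}(\lambda)$ or $\bar F_{a,b}(\lambda)$, or possibly to a non-simple $F_{a,b}(\lambda)$ when $a\in\mathbb{Z}$ and $b\in\{0,-1\}$.

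It remains to identify $V$ as a simple quotient of such a module. Through the evaluation structure of Theorem~\ref{thm:3.2}, the $\bar{\mathcal{L}}$-submodules of $F_{a,b,c}(\lambda)$ and $F_{a,b}(\lambda)$ coincide with their $\hat{\mathcal{L}}$-submodules. If the quotient is $F_{a,b,c}(\lambda)$ with $c\neq0$, it is simple, so $V\cong F_{a,b,c}(\lambda)$. Otherwise $\mathcal{I}$ acts trivially and the module is an intermediate series module over the Virasoro part, whose nontrivial simple quotients are exactly $\bar F_{a,b}(\lambda)$. Thus in every case $V\cong F_{a,b,c}(\lambda)$ or $V\cong\bar F_{a,b}(\lambda)$.

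The main obstacle is the middle step: showing that $\widehat{V}$ is nonzero with a finite $\mathcal{A}\bar{\mathcal{L}}$-composition series, so that $\sigma$ restricts nontrivially to a simple subquotient. This rests entirely on Lemma~\ref{lem:3.11}, which in turn uses the annihilator results of Lemma~\ref{lemm3.10}, where the real work lies. One also has to check the degenerate case $c=0$ carefully, so that no module of the form $F_{a,b,0}$ with nontrivial $H$-action survives; I expect this to be excluded by the form of Theorem~\ref{thm:2.4}.
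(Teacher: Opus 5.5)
Your proposal is correct and follows the paper's route. You use $\bar{\mathcal{L}}V=V$ and the epimorphism $\widehat{V}\to V$, get uniform boundedness of $\widehat{V}$ from Lemma~\ref{lem:3.11}, take the smallest term of an $\mathcal{A}\bar{\mathcal{L}}$-composition series with nonzero image, and conclude via Theorem~\ref{thm:3.4}. Your extra details are harmless refinements: finite length because homogeneous elements of $\mathcal{A}$ act invertibly, and allowing the simple $\mathcal{A}\bar{\mathcal{L}}$-subquotient to be non-simple over $\bar{\mathcal{L}}$. Your worry about $c=0$ disappears once you note that the action on $V$ factors through evaluation at $\lambda$, so Theorem~\ref{thm:2.4} applies to $V$ directly.
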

\begin{proof}
Let $V$ be a nontrivial uniformly bounded simple $\bar{\mathcal{L}}$-module. Then $\bar{\mathcal{L}}V=V$, and there exists an epimorphism $\pi\colon \widehat{V}\to V$. By Lemma~\ref{lem:3.11}, $\widehat{V}$ is uniformly bounded. Hence, $\widehat{V}$ admits a composition series of $\mathcal{A}\bar{\mathcal{L}}$-submodules:
\[
0=\widehat{V}^{(0)}\subset \widehat{V}^{(1)}\subset\cdots\subset \widehat{V}^{(d)}=\widehat{V}
\]
such that each $\widehat{V}^{(i)}/\widehat{V}^{(i-1)}$ is a simple $\mathcal{A}\bar{\mathcal{L}}$-module. Let $g$ be the minimal integer such that $\pi(\widehat{V}^{(g)})\neq 0$ and $\pi(\widehat{V}^{(g-1)})=0$. Since $V$ is simple, it follows that $\pi(\widehat{V}^{(g)})=V$. Therefore, there exists an $\bar{\mathcal{L}}$-epimorphism from the simple $\mathcal{A}\bar{\mathcal{L}}$-module $\widehat{V}^{(g)}/\widehat{V}^{(g-1)}$ onto $V$. The result then follows from Theorem~\ref{thm:3.4}.
\end{proof}

\subsection{Classification theorem of uniformly bounded
  \texorpdfstring{$\mathcal{L}$}{L}-modules}
  Based on Theorem \ref{thm:2.3}, the action of $C_{1,i}$ on any uniformly bounded simple module is trivial. 
  \begin{lemm}
Let $V$ be a uniformly bounded $\mathcal{L}$-module. Then, for all $i\in\mathbb{Z}$, we have $C_{2,i}V=0$.
\end{lemm}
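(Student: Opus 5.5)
Since every $C_{2,i}$ is central, it commutes with $L_0$ and so preserves each weight space $V_\lambda$. The plan is to show that each $C_{2,i}$ acts nilpotently on every weight space, and then that it acts as zero. The standard device is the Heisenberg-type relation
\[
[H_{r,j},H_{-r,i-j}]=r\,C_{2,i}\qquad (r\in\mathbb{Z}+\tfrac12),
\]
combined with a trace argument on finite-dimensional weight spaces. Fix a weight $\lambda\in\mathrm{Supp}(V)$. The operators $H_{r,j}\colon V_{\lambda-r}\to V_\lambda$ and $H_{-r,i-j}\colon V_\lambda\to V_{\lambda-r}$ map between finite-dimensional spaces, with $\dim V_\mu<K$ for all $\mu$.

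The first step uses the trace. Restricting the relation to $V_\lambda$ gives
\[
r\,C_{2,i}|_{V_\lambda}=H_{r,j}H_{-r,i-j}|_{V_\lambda}-H_{-r,i-j}H_{r,j}|_{V_\lambda},
\]
which is a difference of a product $AB$ and a product $BA$ of composable linear maps between different spaces. This does not directly give $\mathrm{tr}=0$ on $V_\lambda$. The cleaner route is to reduce to the simple case and then handle extensions:

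\begin{itemize}
\item \emph{Simple case.} If $V$ is simple, Schur's lemma makes $C_{2,i}$ act as a scalar $c_{2,i}$. Suppose $c_{2,i}\neq0$ for some $i$. Then the elements $H_{r,j}$ and $c_{2,i}^{-1}r^{-1}H_{-r,i-j}$ for $r\in\mathbb{N}+\frac12$ generate infinitely many commuting Heisenberg pairs $[a_r,b_r]=1$.
\item \emph{Forcing unbounded growth.} On a Heisenberg pair with $[a,b]=1$ acting on a weight module whose weight spaces are finite dimensional, taking the trace on a suitable finite-dimensional $L_0$-stable piece gives a contradiction. Concretely, consider $U=\bigoplus_{k=0}^{N}V_{\lambda+k/2}$. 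The commutator identity $[H_{r,j},H_{-r,i-j}]=rc_{2,i}$ gives, after telescoping traces along the chain $V_{\lambda-r}\leftrightarrow V_\lambda$,
\[
\mathrm{tr}_{V_\lambda}(H_{r,j}H_{-r,i-j})-\mathrm{tr}_{V_{\lambda-r}}(H_{-r,i-j}H_{r,j})=rc_{2,i}\dim V_\lambda,
\]
where the two traces are equal because $\mathrm{tr}(AB)=\mathrm{tr}(BA)$ across different spaces. So $rc_{2,i}\dim V_\lambda=0$ for every $\lambda\in\mathrm{Supp}(V)$, hence $c_{2,i}=0$. This trace identity in fact works directly and needs no simplicity.
\item \emph{General case.} For arbitrary uniformly bounded $V$, the same identity gives $\mathrm{tr}_{V_\lambda}(C_{2,i})=0$ for every $\lambda$. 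Applying this to each composition factor, which is legitimate because uniformly bounded modules have finite length by the argument of Lemma~\ref{lemm3.10} and Theorem~\ref{thm:3.13}, shows that $C_{2,i}$ acts as zero on every simple subquotient.
\end{itemize}

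The main obstacle is upgrading from ``$C_{2,i}$ acts nilpotently'' to ``$C_{2,i}V=0$'' for non-simple $V$. The first approach I would try is to apply the trace argument to the operator $C_{2,i}^{k}$, or to pass to generalized eigenspaces. Since $C_{2,i}$ is central, each generalized eigenspace $V^{(\mu)}$ of $C_{2,i}$ is an $\mathcal{L}$-submodule. The trace argument on $V^{(\mu)}_\lambda$ gives $\mu\dim V^{(\mu)}_\lambda=0$, so only $\mu=0$ occurs.

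To kill the nilpotent part, I would use $V'=\ker C_{2,i}$ and the quotient, arguing by induction on the composition length as in Lemma~\ref{lemm3.10}. I expect this step to be the delicate one. If it resists, I would settle for the statement on simple modules, since that is what the subsequent classification of simple uniformly bounded modules actually needs, and there Schur's lemma together with the trace identity suffices.
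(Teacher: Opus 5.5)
The displayed trace identity in your second bullet carries the whole proposal, and it is false. Restricted to $V_\lambda$, the relation reads $r\,C_{2,i}=H_{r,j}H_{-r,i-j}-H_{-r,i-j}H_{r,j}$. The two composites factor through \emph{different} neighbouring weight spaces: the first through $V_{\lambda-r}$, the second through $V_{\lambda+r}$. This is in your convention that $H_{r,j}$ raises weights by $r$; in the paper $[L_0,H_{r,j}]=-rH_{r,j}$, which only reverses the direction.

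Write $t_\mu=\mathrm{tr}_{V_\mu}(H_{-r,i-j}H_{r,j})$. Cyclicity turns $\mathrm{tr}_{V_\lambda}(H_{r,j}H_{-r,i-j})$ into $t_{\lambda-r}$. The other term, however, is $\mathrm{tr}_{V_\lambda}(H_{-r,i-j}H_{r,j})=t_\lambda$, not $t_{\lambda-r}$. In fact the left-hand side of your display is identically zero, so it cannot equal $rc_{2,i}\dim V_\lambda$.

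The correct identity is $r\,\mathrm{tr}_{V_\lambda}(C_{2,i})=t_{\lambda-r}-t_\lambda$. It only telescopes, to $rc_{2,i}\sum_{k=1}^{N}\dim V_{\lambda+kr}=t_\lambda-t_{\lambda+Nr}$, and nothing bounds the traces $t_\mu$. No argument using a single pair $H_{\pm r}$ can succeed. Take $x^{\mu}\mathbb{C}[x^{\pm1}]$ with $\mu\notin\mathbb{Z}$, give $x^{\mu+k}$ weight $(\mu+k)r$, and set $H_{r,j}\mapsto x$, $H_{-r,i-j}\mapsto -rc\,\frac{d}{dx}$. Then $[H_{r,j},H_{-r,i-j}]=rc$ with $c\neq0$, while every weight space is one-dimensional. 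Hence neither your simple case nor the generalized-eigenspace reduction (which reuses the same identity) is proved.

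A genuinely new input is needed. For simple $V$ you could use two pairs simultaneously. If $c_{2,i}\neq0$, suitable rescalings of $H_{\frac12,0},H_{-\frac12,i},H_{\frac32,0},H_{-\frac32,i}$ give two commuting Weyl pairs, hence an action of the Weyl algebra $A_2$. Uniform boundedness gives $\dim(F_kv)=O(k)$ for the Bernstein filtration, i.e.\ Gelfand--Kirillov dimension at most $1$, contradicting Bernstein's inequality.

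The paper instead exploits the interaction with the Virasoro part. By Lemma~\ref{lemm3.10}(ii), some $\Psi^{(m)}_{s,p,\alpha,\beta}=\Delta_{12}^m\bigl(H(\alpha_s)L(\beta_p)\bigr)$ annihilates $V$, hence so do its commutators with $H(\gamma_r)$. A second-order difference in the indices then cancels the $HH$-terms and leaves $(-1)^{r+s}\binom{m+2}{r+s}C_{2,i}L(\beta_{p+r+s})$ in $\operatorname{Ann}(V)$. That computation applies directly to an arbitrary uniformly bounded $V$, with no Schur's lemma and no composition series.

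Your proposal leaves exactly this non-simple case open: the lemma asserts $C_{2,i}V=0$, not merely nilpotence. Your fallback of proving it only for simple modules is strictly weaker than the statement. Note also that Lemma~\ref{lemm3.10} assumes a composition series rather than producing one. Finite length holds only after restricting to a single coset $\lambda+\frac12\mathbb{Z}$.
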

\begin{proof}
Assume $m\ge 4$. It follows from Lemma \ref{lemm3.10} (ii) that
$
\bigl[H(\gamma_r),\Psi_{s,p,\alpha,\beta}^{(m)}\bigr]$
annihilates $V$ for all $p,\alpha_r,\beta_i,\gamma_r\in\mathbb Z$ and $r,s\in\mathbb Z+\frac12$. Choosing $r+s\leq m+2$, we perform a computation analogous to that presented in (ii):
\begin{align*}
0=&\sum_{\substack{0\leq i\leq 2,\\ 0\leq a\leq 1}}(-1)^{i+a}\binom{2}{i}\binom{1}{a}\Bigl([H(\gamma_{r-i-a}),\Psi_{s+a,p+i,\alpha,\beta}^{(m)}]\Bigr)V
\\=& \sum_{\substack{0\leq i\leq 2,\\ 0\leq a\leq 1,\\ 0\leq j\leq m}}(-1)^{i+a+j}\binom{2}{i}\binom{1}{a}\binom{m}{j}\Bigl([H(\gamma_{r-i-a}),H(\alpha_{s+a-j})L(\beta_{p+i+j})]\Bigr)V
\\=& \sum_{\substack{0\leq i\leq 2,\\ 0\leq a\leq 1,\\ 0\leq j\leq m}}(-1)^{i+a+j}\binom{2}{i}\binom{1}{a}\binom{m}{j}\Bigl((r-i-a)\delta_{r+s-i-j,0}C_{2,(\alpha+\gamma)_{r+s-i-j}}L(\beta_{p+i+j})
\\&+(r-i-a)H(\alpha_{s+a-j})H\big((\beta+\gamma)_{p+j+r-a}\big)\Bigr)V
\\=& \sum_{\substack{0\leq i\leq 2,\\ 0\leq a\leq 1}}(-1)^{i+a}\binom{2}{i}\binom{1}{a}\Bigl((r-i-a)S_{12}^{i}\Delta_{12}^m\delta_{r+s,0}C_{2,(\alpha+\gamma)_{r+s}}L(\beta_{p})
\\&+(r-i-a)\Delta_{12}^mH(\alpha_{s+a})H\big((\beta+\gamma)_{p+r-a}\big)\Bigr)V
\end{align*}
\begin{align*}
=&\Delta_{12}^{m+2}\big(\delta_{r+s,0}C_{2,(\alpha+\gamma)_{r+s}}L(\beta_{p})\big)V+\sum_{0\leq i\leq 2}(-1)^{i}\binom{2}{i}(r-i)\Delta_{12}^mH(\alpha_{s})H\big((\beta+\gamma)_{p+r}\big)V
\\&-\sum_{0\leq i\leq 2}(-1)^{i}\binom{2}{i}(r-i-1)\Delta_{12}^m L(\alpha_{s+1})H\big((\beta+\gamma)_{p+r-1}\big)V
\\=&\sum_{0\leq i\leq 2}(-1)^{i}\binom{m+2}{i}\big(\delta_{r+s-i,0}C_{2,(\alpha+\gamma)_{r+s-i}}L(\beta_{p+i})\big)V
\\=&(-1)^{r+s}\binom{m+2}{r+s}C_{2,(\alpha+\gamma)_{0}}L(\beta_{p+r+s})V.
\end{align*}
Setting $\beta_{p+r+s}=p+r+s=0$ in the preceding equation, we conclude that $C_{2,i}$ acts trivially on $V$ for all $i\in\mathbb{Z}$.
\end{proof}
\begin{remark}
    The category of uniformly bounded simple $\mathcal{L}$-modules is equivalent to the category of uniformly bounded simple $\bar{\mathcal{L}}$-modules.
\end{remark}
By Theorem \ref{thm:3.13}, we obtain the following result.
\begin{theo}\label{thm:3.16}
Let $V$ be a uniformly bounded simple $\mathcal{L}$-module. Then $V\cong F_{a,b,c}(\lambda)$ or $V\cong \bar{F}_{a,b}(\lambda)$ for some $a,b\in\mathbb{C}$ and $c\in\mathbb{C}^*$.
\end{theo}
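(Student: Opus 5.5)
The plan is to reduce Theorem~\ref{thm:3.16} to Theorem~\ref{thm:3.13} by showing that the whole center $\mathcal{C}$ acts trivially on $V$. Then $V$ is a module over $\bar{\mathcal{L}}=\mathcal{L}/\mathcal{C}$.

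Let $V$ be a uniformly bounded simple $\mathcal{L}$-module. By Schur's lemma (weight spaces are finite dimensional and $V$ is simple), each central element $C_{k,i}$ acts on $V$ by a scalar $c_{k,i}$.

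\emph{Step 1: the Heisenberg part of the center.} The preceding lemma applies directly to the uniformly bounded module $V$ and gives $C_{2,i}V=0$ for all $i\in\mathbb{Z}$.

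\emph{Step 2: the Virasoro part of the center.} We need $c_{1,i}=0$ for all $i$. I would restrict $V$ to the loop Virasoro subalgebra $\mathcal{V}$. Then $V$ is a uniformly bounded $\mathcal{V}$-module, though not necessarily simple. The paper records, via Theorem~\ref{thm:2.3}, that $C_{1,i}$ acts trivially on uniformly bounded simple modules. To make this rigorous for $V$, I would pick a simple subquotient of $V|_{\mathcal{V}}$. Such a subquotient is uniformly bounded with finite-dimensional weight spaces, and the scalar $c_{1,i}$ persists on it. By Theorem~\ref{thm:2.3}, this subquotient is highest weight, lowest weight, or some $\bar F_{a,b}(\lambda)$. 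Nontrivial highest or lowest weight modules over the loop Virasoro algebra are not uniformly bounded, since they already contain the corresponding Virasoro modules through $\hat{\mathcal{V}}$, whose weight multiplicities grow. Hence the subquotient is either trivial or of the form $\bar F_{a,b}(\lambda)$, and in both cases $C_{1,i}$ acts by $0$. So $c_{1,i}=0$.

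\emph{Step 3: conclusion.} Since $\mathcal{C}$ acts trivially, $V$ is a uniformly bounded simple $\bar{\mathcal{L}}$-module. Theorem~\ref{thm:3.13} then gives $V\cong F_{a,b,c}(\lambda)$ or $V\cong\bar F_{a,b}(\lambda)$.

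The main obstacle is Step 2, specifically the existence of a simple subquotient of $V|_{\mathcal{V}}$. I would handle it by taking a nonzero weight vector $v$ and considering the $\mathcal{V}$-submodule it generates. By uniform boundedness, a Zorn-type argument on submodules not containing $v$ produces a maximal such submodule, and the resulting quotient has a simple subquotient containing the image of $v$. An alternative route avoids subquotients: apply the annihilator element $\Omega^{(l)}$ from Lemma~\ref{lemm3.10}(i), lifted to $U(\mathcal{L})$. Its commutator computations with $L_{m,i}$ would produce central terms $C_{1,i}$, which would force $c_{1,i}=0$ in the same way that $C_{2,i}$ was treated in the preceding lemma.
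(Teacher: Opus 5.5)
Your proposal is correct and follows the paper's route: the paper also kills $C_{1,i}$ by appeal to Theorem~\ref{thm:2.3}, kills $C_{2,i}$ by the preceding lemma, and then invokes Theorem~\ref{thm:3.13}; your passage to a simple $\mathcal{V}$-quotient $U(\mathcal{V})v/N$ just makes explicit the one-line step the paper leaves implicit. The fallback via $\Omega^{(l)}$ is unnecessary, and as stated it would be circular, since Lemma~\ref{lemm3.10} is formulated for $\bar{\mathcal{L}}$-modules and so already presupposes trivial central action.
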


\begin{coro}\label{cor:3.14}
Let $n\ge 2$ be an integer, and let $V$ be a simple Harish-Chandra module over the truncated mirror Heisenberg-Virasoro algebra $\mathcal{L}(n)$. Then $V$ is either a highest weight module, a lowest weight module, or a module of the form $\bar{F}_{a,b}$ for the mirror Heisenberg-Virasoro algebra, satisfying $(\hat{\mathcal{L}}\otimes t\mathbb{C}[t])V=0$.
\end{coro}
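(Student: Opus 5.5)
The plan is to pull the problem back to the loop algebra $\mathcal{L}$ and then read off which $\mathcal{L}$-modules factor through the truncation. By Proposition~\ref{lemm:2.2}, $\mathcal{L}(n)\cong \hat{\mathcal{L}}\otimes \bigl(\mathbb{C}[s^{\pm1}]/s^n\mathbb{C}[s^{\pm1}]\bigr)$, where $s=t+1$. This gives a surjective Lie algebra homomorphism $\pi_n\colon \mathcal{L}\to\mathcal{L}(n)$, because $\hat{\mathcal{L}}\otimes\mathbb{C}[s^{\pm1}]$ is just $\mathcal{L}$ written in the loop variable $s$. Via $\pi_n$, a simple Harish-Chandra $\mathcal{L}(n)$-module $V$ becomes a simple Harish-Chandra $\mathcal{L}$-module annihilated by the ideal $\hat{\mathcal{L}}\otimes s^n\mathbb{C}[s^{\pm1}]$. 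Weight spaces are unchanged, since $L_0\otimes 1$ is a legitimate grading element on both sides. I use $\mathcal{L}$ in the variable $s$ throughout and translate back at the end.

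The first step is the dichotomy ``highest/lowest weight or uniformly bounded''. Restrict $V$ to the Virasoro subalgebra $\hat{\mathcal{V}}$ and to the loop Virasoro subalgebra $\mathcal{V}$. The standard Mathieu-type argument, as used in the proof of Theorem~\ref{thm:2.3} in \cite{GLZ1}, applies to $V$ because the support lies in a single coset $\lambda+\frac12\mathbb{Z}$ and all weight spaces are finite dimensional. It gives the following: either some nonzero weight vector is killed by $\mathcal{L}_+$, or by $\mathcal{L}_-$, or the weight multiplicities are bounded. The argument works as follows. If no weight vector is annihilated by $\mathcal{L}_{+}$, then by Mathieu's lemma the maps $L_{\pm1},L_{\pm2}$ between consecutive weight spaces are injective far out. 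Combined with $\dim V_\mu<\infty$, this bounds the dimensions. In the first two cases simplicity makes $V$ a highest or lowest weight module, and we are done.

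In the uniformly bounded case, Theorem~\ref{thm:3.16} gives $V\cong F_{a,b,c}(\mu)$ or $V\cong\bar F_{a,b}(\mu)$ as $\mathcal{L}$-modules, where $\mu\in\mathbb{C}^*$ is the evaluation parameter in the $s$-variable. Now impose that $\hat{\mathcal{L}}\otimes s^n$ acts trivially. It acts on the evaluation module as $\mu^n\,x$, and $\hat{\mathcal{L}}$ acts nontrivially on a nontrivial simple module. Hence the loop variable must be evaluated so that $s$ acts as $0$. Translating back through the isomorphism of Proposition~\ref{lemm:2.2}, this means that in the original presentation $\mathcal{L}(n)=\hat{\mathcal{L}}\otimes\mathbb{C}[t]/t^n$ the generators $x\otimes t^i$ with $i\ge1$ act by $0$. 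So $(\hat{\mathcal{L}}\otimes t\mathbb{C}[t])V=0$, and $V$ is an $\hat{\mathcal{L}}$-module of intermediate series.

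The step I expect to be the main obstacle is this last identification together with bookkeeping of the parameter. The formal evaluation $s\mapsto 0$ is not among the $\mu\in\mathbb{C}^*$ allowed in Theorem~\ref{thm:3.16}. So I will redo the argument directly on $\mathcal{L}(n)$, as follows. The nilpotent ideal $\hat{\mathcal{L}}\otimes t\mathbb{C}[t]/t^n$ acts locally nilpotently on each weight space: it is finite dimensional modulo grading and preserves weights up to shift. On the other hand, on a uniformly bounded simple module the ideal must act either faithfully through a nonzero evaluation or trivially. I will use the $\mathcal{A}$-cover of Lemma~\ref{lem:3.11} with $\mathcal{A}_2$ replaced by $\mathbb{C}[t]/t^n$; there the central elements $1\otimes t^i$ are nilpotent, so by Schur they act by $0$ for $i\ge1$. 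This is exactly the argument of Theorem~\ref{thm:3.2} with $\lambda_i=0$. It yields that $\hat{\mathcal{L}}\otimes t\mathbb{C}[t]$ acts trivially and that $V$ is a uniformly bounded simple $\hat{\mathcal{L}}$-module. Theorem~\ref{thm:2.4} then applies. To land precisely on $\bar F_{a,b}$ as stated, I would finally check which case of Theorem~\ref{thm:2.4} survives, by examining how $\mathcal{I}$ acts through the nilpotent quotient.
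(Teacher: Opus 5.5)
There are two genuine problems, and the second one cannot be repaired.

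\textbf{The use of Proposition~\ref{lemm:2.2}.} In $\mathbb{C}[s^{\pm1}]$ the element $s^n$ is a unit, so $\hat{\mathcal{L}}\otimes s^n\mathbb{C}[s^{\pm1}]$ is the whole algebra. Also, $\hat{\mathcal{L}}\otimes\mathbb{C}[s^{\pm1}]$ with $s=t+1$ is not $\mathcal{L}$ rewritten, because $\mathbb{C}[t^{\pm1}]\neq\mathbb{C}[(t+1)^{\pm1}]$. A module ``annihilated by $\hat{\mathcal{L}}\otimes s^n\mathbb{C}[s^{\pm1}]$'' is therefore trivial. This, and not any limitation of Theorem~\ref{thm:3.16}, is the source of your ``$s\mapsto 0$'' paradox.

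What the proof of Proposition~\ref{lemm:2.2} actually establishes, before its last substitution, is $\mathbb{C}[t]/t^n\mathbb{C}[t]\cong\mathbb{C}[t^{\pm1}]/(t+1)^n\mathbb{C}[t^{\pm1}]$, with the truncated variable sent to $t+1$. This is legitimate because $t$ is invertible modulo $(t+1)^n$. Hence $\mathcal{L}(n)\cong\mathcal{L}/\bigl(\hat{\mathcal{L}}\otimes(t+1)^n\mathbb{C}[t^{\pm1}]\bigr)$, and the argument the paper has in mind follows directly from Theorem~\ref{thm:4.6}. An evaluation module $V(\mu)$ is killed by this ideal if and only if $(\mu+1)^n=0$, i.e. $\mu=-1\in\mathbb{C}^*$. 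Then $x\otimes(t+1)^j$ acts by $(\mu+1)^jx=0$ for $j\ge1$, which is exactly $(\hat{\mathcal{L}}\otimes t\mathbb{C}[t])V=0$ in the truncated presentation.

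Your detour through an $\mathcal{A}$-cover with $\mathcal{A}_2=\mathbb{C}[t]/t^n$ is therefore unnecessary, and as sketched it is not a proof:
\begin{itemize}
\item It would require redoing Lemma~\ref{lemm3.10}, Lemma~\ref{lem:3.11} and Theorem~\ref{thm:3.13} for $\mathcal{L}(n)$.
\item Schur's lemma only gives that $1\otimes t$ acts by $0$ on a simple subquotient of the cover. Nothing in the definition of an $\mathcal{A}\bar{\mathcal{L}}$-module identifies the action of $x\otimes t^i$ with $(1\otimes t^i)\cdot x$, so you do not get that $x\otimes t^i$ acts by $0$.
\item The claims that the nilpotent ideal ``acts locally nilpotently'' and acts ``either through a nonzero evaluation or trivially'' restate what is to be proved.
\end{itemize}
Your dichotomy sketch is also not Mathieu's argument. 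Injectivity of $L_1\oplus L_2$ between neighbouring weight spaces still permits exponential growth, and a vector killed by $L_1,L_2$ need not be killed by the loop components. Simply cite Theorem~\ref{thm:4.5}, or Corollary~\ref{cor:4.6} applied to $\mathcal{L}(n)$ directly.

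\textbf{The final step.} Once $\hat{\mathcal{L}}\otimes t\mathbb{C}[t]$ acts by zero, $V$ is just a simple uniformly bounded $\hat{\mathcal{L}}$-module. Conversely, every such module is a simple Harish-Chandra $\mathcal{L}(n)$-module via the homomorphism $\mathcal{L}(n)\to\hat{\mathcal{L}}$ induced by $t\mapsto 0$. In particular $F_{a,b,c}$ with $c\in\mathbb{C}^*$ occurs:
\begin{itemize}
\item it is simple and its weight spaces are one-dimensional;
\item its support $a+\frac12\mathbb{Z}$ is unbounded in both directions, so it is neither highest nor lowest weight;
\item $\mathcal{I}$ acts on it nontrivially, so it is not isomorphic to any $\bar F_{a',b'}$.
\end{itemize}
Since the nilpotent quotient acts by zero, examining how $\mathcal{I}$ acts through it can never exclude this case. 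The statement as printed omits this family. What Theorems~\ref{thm:3.16} and~\ref{thm:4.6} actually yield, and what you should prove instead, is that in the third case $V\cong F_{a,b,c}$ or $V\cong\bar F_{a,b}$ as $\hat{\mathcal{L}}$-modules, with $(\hat{\mathcal{L}}\otimes t\mathbb{C}[t])V=0$.
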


 \section{Classiﬁcation of simple Harish-Chandra modules}
Let $\mathfrak{g}$ be a $\frac12\mathbb{Z}$‑graded Lie algebra, and let $V$ be an indecomposable weight $\mathfrak{g}$‑module with weight space decomposition
\[
V=\bigoplus_{r\in\frac12\mathbb{Z}}V_{\lambda+r}.
\]
\begin{defi}
Let $V$ be a weight module.
We call $V$ \textit{upper bounded} {\rm(}resp.\ \textit{lower bounded}{\rm)} if there exist $p\in\mathbb{N}$ and $r_0\in\frac12\mathbb{Z}$ satisfying $\dim V_{\lambda+r}\leq p$ for all $r\geq r_0$ {\rm(}resp.\ $r\leq r_0${\rm)}.
We say $V$ has \textit{upper bounded weights} {\rm(}resp.\ \textit{lower bounded weights}{\rm)}  if $V_{\lambda+r}=0$ for all $r\geq r_0$ {\rm(}resp.\ $r\leq r_0${\rm)}  for some $r_0\in\frac12\mathbb{Z}$.
\end{defi}
The following result is an immediate consequence of \cite[Lemma 1.6]{M2}.
\begin{lemm}\label{lem:4.1}
Let $V$ be a Harish-Chandra module over the mirror Heisenberg-Virasoro algebra $\hat{\mathcal{L}}$ such that $\operatorname{supp}(V)\subseteq \lambda+\tfrac12\mathbb{Z}$. Suppose that, for each $v\in V$, there exists $T(v)\in\mathbb{N}$ such that
\[
L_m v = H_{m+\frac12}v = 0 \quad\text{for all } m\ge T(v).
\]
Then $\operatorname{supp}(V)$ is upper bounded.
\end{lemm}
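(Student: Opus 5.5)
The idea is to reduce to the Virasoro case, where Mathieu's \cite[Lemma 1.6]{M2} applies, and then observe that the extra generators $H_{m+\frac12}$ change nothing essential. Recall what that lemma says for a Harish-Chandra module $V$ over the Virasoro algebra $\hat{\mathcal{V}}$ in which every vector is killed by $L_m$ for $m$ large. Fix one coset $\lambda+\mathbb{Z}$ and let $V'=\bigoplus_{r\in\mathbb{Z}}V_{\lambda+r}$. This is a $\hat{\mathcal{V}}$-submodule of $V$ whose weight spaces are finite dimensional, and the hypothesis gives the local nilpotence condition on $V'$ for all $L_m$ with $m\ge T(v)$. Mathieu's lemma then says $\operatorname{supp}(V')$ is upper bounded. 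The same argument applied to the other coset $\lambda+\frac12+\mathbb{Z}$ bounds that part of the support, and since $\operatorname{supp}(V)$ is the union of the two, it is upper bounded. So the plan is: (1) split $V$ into its two $\hat{\mathcal{V}}$-stable integral cosets; (2) check that the hypotheses of \cite[Lemma 1.6]{M2} hold on each; (3) combine.

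For (2), I would restate Mathieu's mechanism to be safe. Pick a nonzero weight vector $v\in V_\mu$ and set $N=T(v)$. Because $\hat{\mathcal{V}}$ is generated in positive degrees by $L_1,L_2$ (and, for $m\ge N$, by the $L_m$ with $N\le m\le 2N+1$ up to brackets), the subspace $\sum_{m\ge 1}$-span of positive-degree monomials applied to any weight vector lands in a finite-dimensional piece once one restricts to weights above $\mu$. The key point is that for large $k$, the map $L_k\colon V_{\mu'-k}\to V_{\mu'}$ together with $L_{k+1}$ can be used to show that any vector of weight $\mu'$ with $\mu'$ large is in the image of the span of vectors killed by all $L_{m\ge N'}$ for a uniform $N'$. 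Combined with the finite-dimensionality of weight spaces, this forces $V_{\mu'}=0$ for $\mu'$ beyond some bound on each coset.

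The only genuine obstacle is whether Mathieu's lemma as stated needs $V$ to be a $\hat{\mathcal{V}}$-module with a single integral coset. That requirement is exactly why step (1) is needed. A smaller point is that the central charge $C_1$ acts by a scalar on weight spaces, or at least preserves them, so the coset pieces really are Harish-Chandra $\hat{\mathcal{V}}$-modules. The condition on $H_{m+\frac12}$ is not needed for the conclusion. It is presumably recorded because it is what will be verified in applications, and it guarantees the property is inherited by $U(\hat{\mathcal{L}})$-submodules generated by $v$. I would therefore present the proof as the direct application of \cite[Lemma 1.6]{M2} coset by coset.
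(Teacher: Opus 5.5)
Your proposal is correct and follows the same route as the paper, which simply declares the lemma an immediate consequence of \cite[Lemma 1.6]{M2}. Restricting to the Virasoro subalgebra $\hat{\mathcal{V}}$ on each of the two integral cosets of $\lambda+\tfrac12\mathbb{Z}$ and applying that lemma is exactly what this citation amounts to, and you are right that the hypothesis on $H_{m+\frac12}$ plays no role. Your ``mechanism'' paragraph is unnecessary, since the hypotheses of Mathieu's lemma hold trivially on each coset piece; it is also too vague to stand as a proof on its own.
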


\begin{lemm}\label{lem:4.2}
Let $V$ be a Harish-Chandra module over the mirror Heisenberg-Virasoro algebra $\hat{\mathcal{L}}$.
If $V$ is not lower bounded {\rm(}resp.\ upper bounded{\rm)}, then $V$ contains a highest weight {\rm(}resp.\ lowest weight{\rm)} $\hat{\mathcal{L}}$-submodule.
\end{lemm}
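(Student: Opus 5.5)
We prove the statement in the "not lower bounded" case; the upper bounded case then follows by applying the automorphism $\psi$, which exchanges $L_m\leftrightarrow -L_{-m}$ and $H_r\leftrightarrow -H_{-r}$ and hence swaps highest and lowest weight. The strategy follows Mathieu's argument for the Virasoro algebra: first locate a nonzero weight space on which the positive part acts nilpotently, then use Lemma~\ref{lem:4.1} to get an upper bounded support, and finally extract a highest weight vector.

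\textbf{Step 1: a weight with $\dim V_\mu$ large.} Since $V$ is not lower bounded, for every $p$ there are weights $\lambda+r$ with $r$ arbitrarily negative and $\dim V_{\lambda+r}>p$. Fix such a weight $\mu=\lambda+r$ with $\dim V_\mu > 2\dim V_{\mu+1}+\dim V_{\mu+\frac12}+\dim V_{\mu+2}+\dim V_{\mu+\frac32}$. To see this is possible, fix reference weights far to the right and use the unboundedness going left. More carefully: pick $\mu$ in the coset with $\dim V_\mu$ exceeding the sum of the dimensions of the four weight spaces $V_{\mu+\frac12},V_{\mu+1},V_{\mu+\frac32},V_{\mu+2}$. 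If no such $\mu$ existed, the dimensions would be controlled by a linear recursion along the coset, and a comparison argument would bound them below, contradicting the hypothesis. I expect this combinatorial choice to be the delicate point.

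\textbf{Step 2: a nonzero common kernel.} Define
\[
N=\{v\in V_\mu \mid L_1v=L_2v=H_{\frac12}v=H_{\frac32}v=0\}.
\]
By the dimension count in Step~1, $N$ is nonzero. Because $\hat{\mathcal{L}}_+$ is generated as a Lie algebra by $L_1,L_2,H_{\frac12}$ (note $[L_1,H_{\frac12}]=-\tfrac12H_{\frac32}$), every $v\in N$ satisfies $\hat{\mathcal{L}}_+v=0$.

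\textbf{Step 3: upper bounded support and the highest weight submodule.} Take $0\neq v\in N$ and set $W=U(\hat{\mathcal{L}})v=U(\hat{\mathcal{L}}_-)U(\hat{\mathcal{L}}_0)v$. Since $\hat{\mathcal{L}}_0$ is spanned by $L_0$ and the central elements, it acts on $v$ by scalars, so $W=U(\hat{\mathcal{L}}_-)v$. Thus $W$ is a highest weight module, and its support lies in $\mu-\frac12\mathbb{N}$. Lemma~\ref{lem:4.1} is consistent with this, since every $w\in W$ is killed by $L_m$ and $H_{m+\frac12}$ for $m$ large. Therefore $W$ is the required highest weight submodule.

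The main obstacle is Step~1: making the dimension inequality precise so as to guarantee $N\neq0$. Following \cite[Lemma 1.6]{M2}, I would try to find, for the generating set $\{L_1,L_2,H_{\frac12}\}$ (with $H_{\frac32}$ included as needed), a weight $\mu$ whose space exceeds the total dimension of the target spaces. This follows because, in a Harish-Chandra module that is not lower bounded, no linear bound of the form $\dim V_\mu\le C\max\{\dim V_{\mu+j}: 0<j\le 2\}$ can hold along the entire descending coset.
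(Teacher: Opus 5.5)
Your proposal has a genuine gap in Step~1, and the strategy cannot be repaired as written. If your inequality fails at every $\mu$, all you get is $\dim V_\mu\le \dim V_{\mu+\frac12}+\dim V_{\mu+1}+\dim V_{\mu+\frac32}+\dim V_{\mu+2}$ for all $\mu$. Such a recursive bound is compatible with $\dim V_\mu$ growing without bound, even exponentially, as $\mu\to-\infty$. So there is no contradiction with $V$ not being lower bounded, and your closing claim that no bound $\dim V_\mu\le C\max_{0<j\le2}\dim V_{\mu+j}$ can hold is false.

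Here is a concrete counterexample. Let $M=F(c_1,c_2,h)$ be a Verma module over $\hat{\mathcal{L}}$. Measured in units of $\frac12$, the degrees of the basis elements $L_{-k}$, $H_{-r}$ of $\hat{\mathcal{L}}_-$ run exactly once through all positive integers, so $\dim M_{h-j/2}=p(j)$, the partition function. Take $V=M\oplus F_{h,b,c}$ with $c\neq0$. Then $\dim V_{h-j/2}=p(j)+1$ for all $j\in\mathbb{Z}$, with the convention $p(j)=0$ for $j<0$. Since $p(j)\le p(j-1)+p(j-2)$, one gets $\dim V_\mu<\dim V_{\mu+\frac12}+\dim V_{\mu+1}+\dim V_{\mu+2}$ for every $\mu$. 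So no weight makes your dimension count work, even with only $L_1,L_2,H_{\frac12}$ as targets, although $V$ is a Harish-Chandra module that is not lower bounded.

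The structural reason is that operators of fixed degree send $V_\mu$ into target spaces that move, and grow, together with $\mu$. A vector killed by $L_1,L_2,H_{\frac12}$ is a singular vector, and producing one is the whole content of the lemma; a local count cannot supply it.

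The missing idea is Mathieu's trick, which the paper uses: keep the target weight spaces fixed and let the degree of the operators grow. Fix $\lambda$, replacing it by $\lambda+\frac12$ if necessary, so that $\dim V_{\lambda-n}$ is unbounded for $n\in\mathbb{N}$. Then pick $n$ with $\dim V_{\lambda-n}>\dim V_\lambda+\dim V_{\lambda+\frac12}+\dim V_{\lambda+1}$; this is possible because the right-hand side does not depend on $n$. This gives $0\neq u\in V_{\lambda-n}$ with $L_nu=L_{n+1}u=H_{n+\frac12}u=0$.

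Since $L_n,L_{n+1}$ generate $L_k$ for all $k\gg0$, and $[L_k,H_{n+\frac12}]=-(n+\frac12)H_{k+n+\frac12}$, the vector $u$ is killed by $L_k$ and $H_{k+\frac12}$ for all $k\gg0$. It is not, however, killed by all of $\hat{\mathcal{L}}_+$. This is exactly where Lemma~\ref{lem:4.1} enters, which your Step~3 mentions only as ``consistent''. The vectors annihilated by all sufficiently high-degree elements form a nonzero submodule, and Lemma~\ref{lem:4.1} makes its support upper bounded. A weight vector of maximal weight there, within one coset of $\frac12\mathbb{Z}$, is then annihilated by $\hat{\mathcal{L}}_+$ and generates the required highest weight submodule.

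The remaining parts of your proposal are fine:
\begin{itemize}
\item the reduction of the lowest weight case via $\psi$;
\item the generation of $\hat{\mathcal{L}}_+$ by $L_1,L_2,H_{\frac12}$;
\item Step~3, once a vector killed by $\hat{\mathcal{L}}_+$ is available.
\end{itemize}
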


\begin{proof}
Suppose that $V$ is not lower bounded. Fix $\lambda\in\operatorname{supp}(V)$. Then there exists $r\in\tfrac12\mathbb{Z}$ such that
\[
\dim V_{\lambda-r}> 2\bigl(\dim V_{\lambda}+\dim V_{\lambda+\frac12}+\dim V_{\lambda+1}\bigr).
\]
Without loss of generality, we may assume that $r=n\in\mathbb{N}$. Then there exists a nonzero element $u\in V_{-n+\lambda}$ satisfying
\[
L_n u = L_{n+1}u = H_{n+\frac12}u = 0.
\]
It follows from $[\hat{\mathcal{L}}_r,\hat{\mathcal{L}}_s]\subset\hat{\mathcal{L}}_{r+s}$ that $L_k u = H_{k+\frac12}u = 0$ for all $k\ge n^2+n+1$.

Let $V'=\{v\in V\mid \dim\hat{\mathcal{L}}_+v<\infty\}$. It is straightforward to verify that $V'$ is a nonzero $\hat{\mathcal{L}}$-submodule of $V$. By Lemma~\ref{lem:4.1}, $\operatorname{supp}(V')$ is upper bounded. Consequently, $V$ contains a highest weight submodule.
\end{proof}
     
Let $V$ be a weight $\hat{\mathcal{L}}$-module. Denote by $V^+$ (resp. $V^{-}$) the maximal $\hat{\mathcal{L}}$-submodule of $V$ whose weights are upper bounded (resp. lower bounded).
\begin{lemm}\label{lem:4.3}
Let $V$ be a Harish-Chandra module over $\hat{\mathcal{L}}$.
\begin{enumerate}
    \item[\rm (i)]
  If $V$ is a lower bounded $\hat{\mathcal{L}}$-module, then $V^+$ {\rm(}resp.\ $V^-${\rm)} is a finite extension of trivial $\hat{\mathcal{L}}$-modules. If $V$ is not a lower bounded $\hat{\mathcal{L}}$-module, then $V^+$ {\rm(}resp.\ $V^-${\rm)} is not lower bounded {\rm(}resp.\ upper bounded{\rm)}.

    \item[\rm (ii)] If $V$ is indecomposable, then $V/V^+$ {\rm(}resp.\ $V/V^-${\rm)} is a lower bounded {\rm(}resp.\ upper bounded {\rm)} $\hat{\mathcal{L}}$-module.
\end{enumerate}
\end{lemm}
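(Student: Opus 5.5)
We prove Lemma~\ref{lem:4.3} by following the Virasoro argument of \cite{M2} (see also \cite{GLZ1}), using Lemmas~\ref{lem:4.1} and~\ref{lem:4.2} as the mirror Heisenberg--Virasoro substitutes for Mathieu's lemmas. By the automorphism $\psi$, which exchanges upper and lower bounded weights, it suffices to treat $V^+$ and $V/V^+$. The $V^-$ statements follow by applying the $V^+$ case to the module twisted by $\psi$.

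For (i), suppose first that $V$ is lower bounded, so $\dim V_{\lambda+r}\le p$ for all $r\le r_0$. Since $V^+$ has upper bounded weights and is a submodule of $V$, it is a Harish-Chandra module with bounded weight spaces: bounded above because its support is upper bounded, and bounded below by $p$. Every simple subquotient of $V^+$ is therefore a simple module with upper bounded weights and uniformly bounded weight spaces. A nontrivial highest weight $\hat{\mathcal{L}}$-module is not uniformly bounded, because its weight spaces grow like partitions in the free action of $\hat{\mathcal{V}}_-$. So every such subquotient is trivial, i.e.\ one-dimensional with $\hat{\mathcal{L}}$ acting by zero. Since $V^+$ has finitely many weights above any given level and bounded multiplicities, it has finite length, and hence it is a finite extension of trivial modules.

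Next suppose $V$ is not lower bounded. By Lemma~\ref{lem:4.2}, $V$ contains a highest weight submodule $M\subseteq V^+$. To see that $V^+$ is not lower bounded, I would refine the proof of Lemma~\ref{lem:4.2}. For $n$ arbitrarily large, the dimension inequality yields a vector $u\in V_{\lambda-n}$ killed by $L_n,L_{n+1},H_{n+\frac12}$. The module $V'=\{v\mid \dim U(\hat{\mathcal{L}}_+)v<\infty\}$ then contains the subspace $W_n$ of such vectors. The dimension of $W_n$ is at least $\dim V_{\lambda-n}-(\dim V_\lambda+\dim V_{\lambda+\frac12}+\dim V_{\lambda+1})$, which is unbounded as $n\to\infty$. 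Since $V'\subseteq V^+$ by Lemma~\ref{lem:4.1}, the weight multiplicities of $V^+$ are unbounded below, so $V^+$ is not lower bounded. This counting step is where care is needed, and I expect it to be the main point to check.

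For (ii), let $V$ be indecomposable and set $\bar V=V/V^+$. Suppose for contradiction that $\bar V$ is not lower bounded. By Lemma~\ref{lem:4.2}, $\bar V$ contains a nonzero highest weight submodule $\bar N$. Let $N$ be its preimage in $V$. Then $N$ is an extension of $\bar N$ by $V^+$, and both have upper bounded weights. Because the support of $V$ lies in a single coset $\lambda+\frac12\mathbb{Z}$ (by indecomposability), the support of $N$ is the union of two upper bounded sets and is therefore upper bounded. By maximality, $N\subseteq V^+$, which forces $\bar N=0$, a contradiction. The only subtle point is justifying that ``maximal submodule with upper bounded weights'' is well defined. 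This holds because the sum of two such submodules again has upper bounded weights, since the weights of the sum are contained in the union of the two supports. So the main obstacle of the whole lemma is the unboundedness claim at the end of (i); parts (ii) and the first half of (i) are formal.
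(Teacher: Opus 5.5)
The gap is in the first half of (i). You infer that $V^+$ has finite length because it ``has finitely many weights above any given level and bounded multiplicities''. That inference does not hold. The support of $V^+$ may a priori extend infinitely far downward, and bounded multiplicities alone do not exclude an infinite filtration.

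What you are missing is that a trivial module is concentrated in weight $0$. Combine this with the standard fact that every weight $\mu$ of a Harish-Chandra module occurs in some simple subquotient. To see this, let $A=U(\hat{\mathcal{L}})V^+_\mu$ and let $B$ be the sum of all submodules of $A$ with zero $\mu$-weight space. Pick a nonzero submodule $Y\subseteq A/B$ with $\dim Y_\mu$ minimal. Then $U(\hat{\mathcal{L}})Y_\mu$ is simple with nonzero $\mu$-weight space.

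You have shown that every simple subquotient of $V^+$ is trivial, so this gives $\operatorname{supp}(V^+)\subseteq\{0\}$. Hence $V^+=V^+_0$ is finite dimensional. Moreover, all $L_m$ with $m\neq 0$ and all $H_r$ kill it. The relations $[L_2,L_{-2}]=4L_0+\frac12 C_1$ and $[H_{\frac12},H_{-\frac12}]=\frac12 C_2$ show that $C_1,C_2$ act by zero. So $V^+$ is even a finite direct sum of trivial modules. The paper sidesteps this step by citing \cite[Theorem III.8]{S0}.

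Separately, your reason why a nontrivial highest weight module is not uniformly bounded is wrong. Simple highest weight modules are in general not free over $U(\hat{\mathcal{V}}_-)$. The fact itself is standard, and the paper uses it in Lemma~\ref{lemm3.10}. A correct justification splits on $c_2$. If $c_2\neq0$, the operators $H_{-r}$ with $r>0$ generate a Fock space inside the simple module. If $c_2=0$, all $H_r$ act by zero on the simple quotient, which is then a simple highest weight Virasoro module with $(c_1,h)\neq(0,0)$.

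The rest is sound. Your part (ii) coincides with the paper's proof. Your remark that the sum of two submodules with upper bounded weights again has upper bounded weights is exactly what makes $V^+$ absorb the preimage $N$.

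For the second half of (i), your route is more careful than the paper's. The paper only notes that the highest weight submodule from Lemma~\ref{lem:4.2} lies in $V^+$. On its own that does not show non-lower-boundedness, since that submodule could be trivial. Your estimate $\dim V^+_{\lambda-n}\ge \dim V_{\lambda-n}-(\dim V_\lambda+\dim V_{\lambda+\frac12}+\dim V_{\lambda+1})$ proves it directly.

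Two small adjustments are needed there. First, define $V'$ as in the paper, as the set of $v$ with $L_kv=H_{k+\frac12}v=0$ for all $k\gg 0$. With your condition $\dim U(\hat{\mathcal{L}}_+)v<\infty$, the inclusion $W_n\subseteq V'$ is not immediate. Second, if the large multiplicities occur at half-integral shifts, replace $\lambda$ by $\lambda+\frac12$.
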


\begin{proof}
(i) Suppose that $V$ is lower bounded. Then $V^+$ is a uniformly bounded module. By \cite[Theorem III.8]{S0}, $V^+$ is a finite length extension of trivial modules over the Virasoro algebra, and hence a finite length extension of the trivial $\hat{\mathcal{L}}$-module. If $V$ is not lower bounded, then Lemma~\ref{lem:4.2} implies that $V$ contains a highest weight $\hat{\mathcal{L}}$-submodule, which must be contained in $V^+$. Consequently, $V^+$ is not lower bounded.

(ii) Assume, for contradiction, that $V/V^+$ is not lower bounded. By Lemma~\ref{lem:4.2}, $V/V^+$ contains a highest weight $\hat{\mathcal{L}}$-submodule. This contradicts the definition of $V^+$.
\end{proof}
By Lemma \ref{lem:4.3} and an argument analogous to that in \cite[Theorem 3.1, Proposition 3.3]{S0}, we obtain the following result.
\begin{lemm}\label{lem:4.4}
Let $V$ be a Harish-Chandra module over $\hat{\mathcal{L}}$ that does not contain any trivial $\hat{\mathcal{L}}$-submodules. Then there exists a decomposition of $\hat{\mathcal{L}}$-submodules
\[
V\cong V^+\oplus V^0\oplus V^-,
\]
where $V^\pm$ are defined as above and $V^0$ is a uniformly bounded module.
\end{lemm}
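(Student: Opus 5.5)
The plan follows Su's argument for the Virasoro algebra \cite[Theorem 3.1, Proposition 3.3]{S0}, with $\hat{\mathcal{L}}$ in place of the Virasoro algebra and Lemma~\ref{lem:4.3} supplying the needed boundedness facts. Work in one coset $\lambda+\tfrac12\mathbb{Z}$; since $V$ is a weight module it is a direct sum over cosets, so this is harmless.

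\textbf{Step 1: the pieces $V^\pm$.} Because $V$ has no trivial submodules, part (i) of Lemma~\ref{lem:4.3} shows that a nonzero $V^+$ (a finite extension of trivial modules in the lower bounded case) cannot occur unless $V$ fails to be lower bounded. Every highest weight submodule lies in $V^+$, and $V^+$ is generated by highest-weight-type vectors. The same holds for $V^-$ with the roles of the two directions exchanged. Using the no-trivial-submodule hypothesis I expect to show $V^+\cap V^-=0$: a module with weights bounded on both sides is finite dimensional, and a finite-dimensional $\hat{\mathcal{L}}$-module is trivial, because the Virasoro part is simple and infinite dimensional.

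\textbf{Step 2: splitting off $V^+$.} Here I copy Su's argument. Since $V^+$ has upper bounded weights, choose $r_0$ with $V^+_{\lambda+r}=0$ for all $r\ge r_0$. Then consider
\[
V^{\ge}:=\sum_{r\ge r_0}U(\hat{\mathcal{L}})V_{\lambda+r}.
\]
I would show that $V=V^+\oplus V'$ for a complement $V'$ that is lower bounded, using part (ii) of Lemma~\ref{lem:4.3}, which says $V/V^+$ is lower bounded. To split the extension I plan to use the Casimir-free technique from \cite{S0}. A highest weight module with finite-dimensional weight spaces has only finitely many highest weights modulo each coset. For $r$ large, the weight spaces $V_{\lambda+r}$ therefore avoid the weights of $V^+$, and the submodule generated by the high weight spaces intersects $V^+$ trivially; any nonzero intersection would be a submodule with upper bounded weights, hence contain a highest weight vector, and one checks this is impossible by weight considerations. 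Lemma~\ref{lem:4.1} and Lemma~\ref{lem:4.2} give the finiteness and existence statements needed.

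\textbf{Step 3: splitting off $V^-$ and identifying $V^0$.} Apply the symmetric argument to $V'$ to obtain $V'=V^-\oplus V^0$. The remaining module $V^0$ is both upper bounded and lower bounded in dimension. Since it is Harish-Chandra, the finitely many middle weights are finite dimensional, so $V^0$ is uniformly bounded.

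The \textbf{main obstacle} is the splitting in Step 2. Lemma~\ref{lem:4.3} only says that $V/V^+$ is lower bounded; it does not directly give a submodule complement. I would first try Su's approach: take the complement generated by weight spaces far below the lowest weights of $V^+$ and verify that it meets $V^+$ trivially. If that fails, I would fall back on $\operatorname{Ext}$-vanishing between highest weight modules and lower bounded modules whose weights are separated, which holds because $L_0$-generalized eigenvalue blocks separate them.
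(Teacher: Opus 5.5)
Your outline matches the paper's, whose proof is only an appeal to Lemma~\ref{lem:4.3} and to the argument of \cite[Theorem 3.1, Proposition 3.3]{S0}, and your Steps 1 and 3 are fine. But the whole content of the lemma is the splitting in Step 2, and neither mechanism you propose for it works.

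The claim that $V^{\ge}=\sum_{r\ge r_0}U(\hat{\mathcal{L}})V_{\lambda+r}$ meets $V^+$ trivially ``by weight considerations'' has no support. The operators $L_{-n}$ and $H_{-r}$ send vectors of high weight into exactly the weights occupied by $V^+$. The fact that a nonzero intersection would contain a highest weight vector of weight below $\lambda+r_0$ contradicts nothing. This part of your argument never uses the absence of trivial submodules, and without that hypothesis its conclusion is false. Take $F_{0,0}$ with $\mathcal{I}F_{0,0}=0$:
\begin{itemize}
\item one has $L_mv_k=-kv_{m+k}$, so the only proper nonzero submodule is $\mathbb{C}v_0$;
\item hence $V^+=\mathbb{C}v_0$, and it has no complement;
\item $L_nv_{-n}=nv_0$ puts $v_0$ into $V^{\ge}$ for every admissible $r_0$.
\end{itemize}
So no argument that uses only weights can produce the splitting. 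Moreover, even granting $V^{\ge}\cap V^+=0$, you would still need $V^++V^{\ge}=V$, which you do not address.

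The fallback rests on a misconception. $L_0$ is not central, so its eigenspaces (generalized or not; on a weight module $L_0$ is semisimple anyway) are not submodules and cannot split extensions. Disjoint supports do not force $\operatorname{Ext}$ to vanish: $0\to\mathbb{C}v_0\to F_{0,0}\to F_{0,0}/\mathbb{C}v_0\to0$ is non-split with disjoint supports. In the case at hand the supports are not even disjoint. Lemma~\ref{lem:4.3}(ii) bounds $V/V^+$ only in \emph{dimension}, and a cuspidal part has support equal to the whole coset.

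What is missing is a genuine substitute for a central character, which the Virasoro part lacks. In \cite{S0} this comes from restricting to the $\mathfrak{sl}_2$-subalgebras spanned by $L_{-n},L_0,L_n$ and using the classification of indecomposable weight $\mathfrak{sl}_2$-modules proved there. To turn Step 2 into a proof you need that input, followed by a check that the complement you obtain is stable under the $H_r$ and not only under the Virasoro part. Alternatively, you need some other real $\operatorname{Ext}$-vanishing result. Lemmas~\ref{lem:4.1}--\ref{lem:4.3} by themselves give existence of highest weight submodules and boundedness, never a splitting.
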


\subsection{Main results}
\begin{theo}\label{thm:4.5}
Let $V$ be a simple Harish-Chandra $\mathcal{L}$-module. Then $V$ is either a highest weight module, a lowest weight module, or a uniformly bounded module.
    
\end{theo}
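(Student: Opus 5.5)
We argue by a trichotomy on the behaviour of the weight multiplicities of $V$ as a module over the subalgebra $\hat{\mathcal{L}}=\hat{\mathcal{L}}\otimes 1$. Since $V$ is simple, $\operatorname{Supp}(V)\subseteq\lambda+\frac12\mathbb{Z}$ for some $\lambda$, and $V$ is a Harish-Chandra $\hat{\mathcal{L}}$-module by restriction. There are three cases: (a) $V$ is both upper and lower bounded; (b) $V$ is not lower bounded; (c) $V$ is not upper bounded. In case (a), $V$ is uniformly bounded: the weight spaces with $r\ge r_0$ or $r\le r_0'$ have bounded dimension, and the finitely many remaining weight spaces are finite dimensional. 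So it remains to show that (b) forces $V$ to be a highest weight module. Applying the automorphism $\psi$, which interchanges positive and negative parts, then shows that (c) forces $V$ to be a lowest weight module.

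\textbf{Case (b).} Assume $V$ is not lower bounded. By Lemma~\ref{lem:4.2}, $V$ contains a highest weight $\hat{\mathcal{L}}$-submodule, so the subspace
\[
V'=\{v\in V\mid \dim U(\mathcal{L}_+)v<\infty\}
\]
contains a nonzero $\hat{\mathcal{L}}$-highest weight vector $u$. The main step is to upgrade this to the loop algebra: we must show that some nonzero vector is annihilated by all of $\mathcal{L}_+$, including every $L_{m,i}$ and $H_{r,i}$ with $i\neq0$.

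I would follow the mechanism of the loop Virasoro case (Theorem~\ref{thm:2.3}, \cite{GLZ1}). Fix a weight $\mu$ with $\dim V_{\mu-n}$ very large, as in the proof of Lemma~\ref{lem:4.2}. The subspace
\[
\{w\in V_{\mu-n}\mid L_{n,i}w=L_{n+1,i}w=H_{n+\frac12,i}w=0\ \text{for }|i|\le N\}
\]
is nonzero once $\dim V_{\mu-n}$ exceeds the relevant bound. To pass from finitely many loop indices $i$ to all $i$, we need a finiteness statement. On the highest-weight-type part, the space $\mathcal{L}_+\cdot w$ lies in finitely many weight spaces and is finite dimensional. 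From this one obtains a nonzero polynomial $P(t)$ such that $\hat{\mathcal{L}}_+\otimes P(t)\mathbb{C}[t^{\pm1}]$ annihilates $w$. Brackets then propagate this annihilation to all of $\mathcal{L}_+$ modulo this ideal.

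The set $N(V)=\{v\in V\mid \mathcal{L}_+^{\ge k}v=0 \text{ for some }k\}$ is then a nonzero $\mathcal{L}$-submodule, so $N(V)=V$ by simplicity. Hence every vector is killed by all sufficiently high $L_{m,i}$ and $H_{m+\frac12,i}$. The loop analogue of Lemma~\ref{lem:4.1}, proved the same way from \cite[Lemma 1.6]{M2}, then shows that $\operatorname{Supp}(V)$ is upper bounded. A vector of maximal weight is annihilated by $\mathcal{L}_+$ and is an eigenvector for the abelian algebra $\mathcal{L}_0$: the corresponding weight space is finite dimensional and preserved by $\mathcal{L}_0$, so such an eigenvector exists. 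Simplicity then makes $V$ a highest weight module.

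The main obstacle is the uniformity in the loop parameter $i$. Lemma~\ref{lem:4.2} only supplies vectors killed by $\hat{\mathcal{L}}_+$, with no control over $\hat{\mathcal{L}}_+\otimes t^i$. I would first try to reproduce the \cite{GLZ1} argument: use finite dimensionality of the weight spaces of $V$ to show that the annihilator of a suitable vector in $\mathcal{L}_n\oplus\mathcal{L}_{n+\frac12}\oplus\mathcal{L}_{n+1}$ has finite codimension and is of the form $\hat{\mathcal{L}}_{n}\otimes I$ for a cofinite ideal $I$ of $\mathbb{C}[t^{\pm1}]$. The $H$-part can be handled via the relation $[L_{m,i},H_{r,j}]=-rH_{m+r,i+j}$.
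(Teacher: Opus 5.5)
There is a genuine gap. The outer structure is fine: the trichotomy (a)/(b)/(c), the reduction of (c) to (b) via $\psi$, and the endgame. For the endgame, once $N(V)=V$, Lemma~\ref{lem:4.1} applied to $\hat{\mathcal{L}}$ itself (no loop version is needed) bounds the support above, and a common $\mathcal{L}_0$-eigenvector in the top weight space is a highest weight vector. That endgame is tidier than the paper's treatment of $\bar P^0$ and $\bar P^-$.

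The gap is the one step carrying the content of the theorem: producing a nonzero vector of $N(V)$.
\begin{itemize}
\item Lemma~\ref{lem:4.2} only gives a vector $u$ killed by $\hat{\mathcal{L}}_+\otimes 1$. Nothing then controls $L_{m,i}u$ for $i\neq 0$. So your claims that $u\in V'$, and that $\mathcal{L}_+w$ lies in finitely many weight spaces, assume what must be proved.
\item Your dimension count handles only finitely many loop indices at once. Killing $L_{n,i},L_{n+1,i},H_{n+\frac12,i}$ for $|i|\le N$ needs $\dim V_{\mu-n}$ to exceed about $(2N+1)(\dim V_\mu+\dim V_{\mu+\frac12}+\dim V_{\mu+1})$. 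So $n$ and the resulting vector both depend on $N$, and no single vector works for all $i$.
\item The cofinite-ideal route is not carried out and does not obviously close the gap. The annihilator $L_n\otimes K$ of a weight vector has finite codimension, but $K$ need not contain a nonzero ideal unless, say, $w$ is an $\mathcal{L}_0$-eigenvector. Your subspace is not $\mathcal{L}_0$-stable, since $L_{n,i}(L_{0,g}w)=nL_{n,i+g}w$ for $w$ in it.
\item Even an ideal $I\neq\mathbb{C}[t^{\pm1}]$ would only give annihilation by $\hat{\mathcal{L}}_+\otimes I$. Membership in $N(V)$ needs annihilation by all $L_{n,i}$ and $H_{n-\frac12,i}$ with $n$ large.
\end{itemize}

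The missing idea is to let the test vector depend on $i$, while the vector you finally conclude about and the degree bound stay independent of $i$. The paper proceeds as follows.
\begin{itemize}
\item It fixes an $\hat{\mathcal{L}}$-highest weight vector $v_0$ of weight $\lambda\neq0$ generating $F=U(\hat{\mathcal{L}})v_0$ with a nontrivial simple quotient $F/F'$. Nontriviality must be arranged; the paper passes to weight $-1$ when needed, otherwise $F/F'$ could be trivial.
\item It picks a single $s$ with $\dim(F/F')_{\lambda-s}>\dim V_\lambda+\dim V_{\lambda-\frac12}$.
\item For each $i$ separately, this gives $w^{(i)}\in F_{\lambda-s}\setminus F'_{\lambda-s}$ with $L_{s,i}w^{(i)}=H_{s-\frac12,i}w^{(i)}=0$.
\item Since the weights of $F$ are bounded above, also $L_{s+j}w^{(i)}=H_{s+j-\frac12}w^{(i)}=0$ for $j>s_0$. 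Bracketing then gives $L_{2s+j,i}w^{(i)}=H_{2s+j-\frac12,i}w^{(i)}=0$.
\item Because $w^{(i)}\notin F'$, one has $v_0\in U(\hat{\mathcal{L}}_+)w^{(i)}$. Commuting past $\hat{\mathcal{L}}_+$ only raises degree and keeps the loop index, so these vanishings pass to $v_0$ with the bound $2s+s_0$, which does not depend on $i$.
\end{itemize}
That places $v_0$ in $N(V)$, and from there your argument goes through.
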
 
\begin{proof}
Suppose that $V$ is not a uniformly bounded module.
We view $V$ as an $\hat{\mathcal{L}}$-module.
Let $P$ be a minimal $\hat{\mathcal{L}}$-submodule of $V$ such that $V/P$ is trivial,
and let $T$ stand for the largest trivial $\hat{\mathcal{L}}$-submodule of $P$. Note that $T$
is finite dimensional.

By Lemma~\ref{lem:4.4}, we have the following decomposition of $\hat{\mathcal{L}}$-modules:
\[
\bar{P}:= P/(P\cap T)\cong \bar{P}^+\oplus \bar{P}^0\oplus \bar{P}^-,
\]
where $\operatorname{supp}(\bar{P}^+)$ is upper bounded, $\operatorname{supp}(\bar{P}^-)$ is lower bounded,
and $\bar{P}^0$ is uniformly bounded. Without loss of generality, we assume that $\bar{P}^+$ is nontrivial.

For any $w\in P$, denote by $\bar{w}$ its image in $\bar{P}$.
Since the central element $C_1$ acts on $V$ via scalar multiplication, we split the proof into two cases according to this scalar.

\begin{case}
The action of $C_1$ is zero.
\end{case}

Let $\omega$ be the highest weight of $\bar{P}^+$. Choose a weight vector $v_1\in P_{\omega}^+$ such that $\bar{v}_1\neq 0$.
We distinguish two subcases according to whether $\omega$ vanishes.
If $\omega\neq 0$, set $\lambda=\omega$ and $v_0=v_1$.
If $\omega=0$, consider the cyclic module $U(\hat{\mathcal{L}})v_1$.
By the minimality of $P$, the quotient
\[
U(\hat{\mathcal{L}})v_1\big/\bigl(U(\hat{\mathcal{L}})v_1\cap T\bigr)
\]
is a highest weight $\hat{\mathcal{L}}$-submodule of $\bar{P}^+$.
As vector spaces,
\[
U(\hat{\mathcal{L}})v_1 = U(\hat{\mathcal{L}}_-)v_1 + \bigl(U(\hat{\mathcal{L}})v_1\cap T\bigr).
\]
One can pick a weight vector $v_0\in U(\hat{\mathcal{L}})v_1$ of weight $-1$, i.e., $L_0 v_0 = -v_0$,
such that its image $\bar{v}_0$ is a highest weight vector in the above quotient, meaning
$\hat{\mathcal{L}}_+\bar{v}_0=0$.
In this subcase we set $\lambda=-1$.

In both subcases we have $\lambda\neq 0$.
Define $F:=U(\hat{\mathcal{L}})v_0$ and $T':=F\cap T$.
Then $F/T'$ is a nontrivial highest weight $\hat{\mathcal{L}}$-submodule of $\bar{P}^+$, and
\[
F = U(\hat{\mathcal{L}}_-)v_0 + T'
\]
as vector spaces.
Let $F'$ be the largest $\hat{\mathcal{L}}$-submodule of $F$ subject to
\[
F'_{\lambda}=F'_{\lambda-\frac12}=0.
\]
Observe $T'\subseteq F'$, and $F/F'$ is isomorphic to the nontrivial simple $\hat{\mathcal{L}}$-module $F(0,c_2,\lambda)$.

By our hypothesis, there exists $s\in \frac12\mathbb{Z}_+$ such that
\[
\dim F(0,c_2,\lambda)_{\lambda-s}>\dim V_{\lambda}+\dim V_{\lambda-\frac12}.
\]
For each $i\in\mathbb{Z}$, consider the linear map
\[
L_{s,i}\oplus H_{s-\frac12,i}\colon F''_{\lambda-s}\to V_{\lambda}\oplus V_{\lambda-\frac12},
\]
where $F''_{\lambda-s}$ is a vector space complement of $F'_{\lambda-s}$ inside $F_{\lambda-s}$.
From the dimension inequality,
\[
\dim F''_{\lambda-s}=\dim F(0,c_2,\lambda)_{\lambda-s}>\dim\bigl(V_{\lambda}\oplus V_{\lambda-\frac12}\bigr),
\]
there exists a nonzero element $w_k\in F''_{\lambda-s}\subseteq F_{\lambda-s}\setminus F'_{\lambda-s}$ satisfying
\[
L_{s,i}w_k=H_{s-\frac12,i}w_k=0.
\]

Pick $s_0\in\mathbb{N}$ so that $\lambda+j\neq 0$ and $\lambda+j-\frac12\neq 0$ for all $j\in\mathbb{N}$ with $j>s_0$.
For $j>s_0$, we have
\[
(F'/T')_{\lambda+j}=(F'/T')_{\lambda+j-\frac12}=
(F/F')_{\lambda+j}=(F/F')_{\lambda+j-\frac12}=0.
\]
Since $L_{s+j}w_k\in F_{\lambda+j}$ and $H_{s+j-\frac12}w_k\in F_{\lambda+j-\frac12}$,
it follows that $F_{\lambda+j}=F_{\lambda+j-\frac12}=0$, hence
\[
L_{s+j}w_k=H_{s+j-\frac12}w_k=0,\ \forall\,j>s_0,\ k\in\mathbb{Z}.
\]
Applying $L_{s+j,i}$ yields
\begin{align*}
&L_{s+j}(L_{s,i}w_k) = jL_{2s+j,i}w_k=0,
\\&L_{s+j}(H_{s-\frac12,i}w_k) = -\bigl(s-\frac12\bigr)\bigl(H_{2s+j-\frac12,i}w_k\bigr)=0,
\quad \forall\,j>s_0.
\end{align*}
Because $w_k\in F_{\lambda-s}\setminus F'_{\lambda-s}$, one can find elements
$x_{k_1},\dots,x_{k_t}\in\hat{\mathcal{L}}$ with $k_1,\ldots,k_t\in\frac{1}{2}\mathbb{Z}_+$
and $k_1+\cdots+k_t=s$, such that
\[
0\neq x_{k_1}\cdots x_{k_t}\bar{w}_k\in (F/F')_{\lambda}=F_{\lambda}/F'_{\lambda}.
\]
Here each $x_{k}$ equals $L_{k}$ if $k\in\mathbb{Z}$, or $H_{k}$ if $k\in\mathbb{Z}+\frac12$, where $k\in\{k_1,\ldots,k_t\}$.
Recall $F'_{\lambda}=0$ and $\lambda\neq 0$, so
\[
0\neq x_{k_1}\cdots x_{k_t}w_k\in F_{\lambda}=\mathbb{C}v_0.
\]
Thus there exists $d\in\mathbb{C}^\ast$ with $d\,x_{k_1}\cdots x_{k_t}w_k = v_0$.
Combined with $L_{2s+j,i}w_k=0$ for all $j>s_0$, we obtain
\[
L_{2s+j,i}v_0
=L_{2s+j,i}\bigl(d\,x_{k_1}\cdots x_{k_t}w_k\bigr)=0,\ \forall\,j>s_0,\ i\in\mathbb{Z}.
\]
Similarly, since $H_{2s+j-\frac12,i}w_k=0$ for $j>s_0$, we have
\[
H_{2s+j-\frac12,i}v_0
=H_{2s+j-\frac12,i}\bigl(d\,x_{k_1}\cdots x_{k_t}w_k\bigr)=0,\ \forall\,j>s_0,\ i\in\mathbb{Z}.
\]

For $N\in\mathbb{N}$, let $\mathcal{L}_{>2s+N}={\rm span}\{L_m,H_{m-\frac{1}{2}}\mid m>2s+N\}$. Then, $\mathcal{L}_{>2k+N}v_0=0$.
Now define the subspace
\[
V^+=\Bigl\{v\in V\mid \exists\,N(v)\in\mathbb{Z}_+ \  \text{s.t.}\
L_{n,i}v=H_{n-\frac12,i}v=0,\ \forall\,n>N(v),\ i\in\mathbb{Z}\Bigr\}.
\]
We have shown that $0\neq v_0\in V^+$. It is readily verified that $V^+$ is an $\hat{\mathcal{L}}$-submodule of $V$.
By the simplicity assumption on $V$, it follows that $V=V^+$.
Suppose for a moment that $\bar{P}^0$ is nontrivial. Then $P$ contains an $\hat{\mathcal{L}}$-submodule $W'$ such that
$W'/(W'\cap T)$ is a simple intermediate series $\hat{\mathcal{L}}$-module.
Let $v\in W'$ be any nonzero weight vector of nonzero weight. Then $v$ cannot lie in $V^+$, contradicting $V=V^+$. Therefore $\bar{P}^0=0$.

Next assume $\bar{P}^-$ is nontrivial. Define
\[
V^-=\Bigl\{v\in V\mid \exists\,N(v)\in\mathbb{Z}_+ \ \text{s.t.}\
L_{n,i}v=H_{n-\frac12,i}v=0,\ \forall\,n<N(v),\ i\in\mathbb{Z}\Bigr\}.
\]
A similar argument shows $V=V^-$.
We then obtain $V=V^+=V^-$, which implies that $V$ is trivial as an $\hat{\mathcal{L}}$-module.
This contradicts our initial hypothesis that $V$ is not uniformly bounded.
Hence, $\bar{P}^-$ is trivial.

We conclude that both $\bar{P}^0$ and $\bar{P}^-$ are trivial. Consequently, the support of $V$ is upper bounded,
so $V$ is a highest weight $\mathcal{L}$-module.

\begin{case}
The scalar action of $C_1$ is nonzero.
\end{case}

Since $C_1$ acts by a nonzero scalar, $V$ has no trivial subquotients as an $\hat{\mathcal{L}}$-module.  
We no longer need to factor out the maximal trivial submodule $T$, and several steps concerning intersections with $T$ can be omitted.
By a method similar to that used in Case $1$, we obtain $V=P=\bar{P}$ and $\bar{P}^0=0$.
Repeating the above line of argument yields the same conclusion that $V$ is a highest weight $\mathcal{L}$-module.
\end{proof}

Combining Theorem  \ref{thm:3.16} and 
 Theorem \ref{thm:4.5},  we obtain
\begin{theo}\label{thm:4.6}
Let $V$ be a simple Harish--Chandra $\mathcal{L}$-module. Then $V$ is either a highest weight module, a lowest weight module, or an evaluation module of the intermediate series $F_{a,b,c}(\lambda)$ or $\bar{F}_{a,b}(\lambda)$.
\end{theo}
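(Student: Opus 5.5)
The plan is to derive Theorem~\ref{thm:4.6} directly from the two structural results already in place: Theorem~\ref{thm:4.5} and Theorem~\ref{thm:3.16}. Let $V$ be a simple Harish--Chandra $\mathcal{L}$-module. Theorem~\ref{thm:4.5} says $V$ is a highest weight module, a lowest weight module, or a uniformly bounded module. Its proof handles the alternative where $\bar P^+$ is nontrivial. The alternative where $\bar P^-$ is nontrivial will be reduced to it by the automorphism $\psi$, which interchanges $\mathcal{L}_+$ and $\mathcal{L}_-$. Twisting $V$ by $\psi$ turns a module with lower bounded support into one with upper bounded support, so that alternative yields exactly the lowest weight modules.

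In the uniformly bounded case, Theorem~\ref{thm:3.16} gives $V\cong F_{a,b,c}(\lambda)$ or $V\cong\bar F_{a,b}(\lambda)$. That result in turn rests on three steps:
\begin{enumerate}
\item The annihilator computation (Lemma~\ref{lemm3.10}) yields the vanishing of $C_{2,i}$ on $V$. Together with the vanishing of $C_{1,i}$ inherited from the loop Virasoro case (Theorem~\ref{thm:2.3}), this makes $V$ an $\bar{\mathcal{L}}$-module.
\item The $\mathcal{A}$-cover is uniformly bounded (Lemma~\ref{lem:3.11}).
\item $V$ is a quotient of a simple $\mathcal{A}\bar{\mathcal{L}}$-subquotient of the cover (Theorem~\ref{thm:3.13}), and such subquotients are evaluation modules (Theorems~\ref{thm:3.2} and \ref{thm:3.4}).
\end{enumerate}

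Two small compatibility points need checking. First, the simplicity assumption on $V$ is preserved when passing to these classifications. Second, the parameters are correct: $c\in\mathbb{C}^*$ for $F_{a,b,c}$, and $\lambda\in\mathbb{C}^*$. Both follow from Theorem~\ref{thm:3.2}, since $\lambda_i=\lambda^i$ with $\lambda_0=1$ and the $\lambda_i$ are invertible scalars because $1\otimes t^{\pm1}$ act invertibly.

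The main obstacle is not in this final assembly but in the inputs. For Theorem~\ref{thm:4.5}, the delicate point is producing the vector $v_0$ annihilated by all $L_{n,i},H_{n-\frac12,i}$ for $n$ large and \emph{uniformly in the loop index $i$}. This requires the dimension count choosing $w_k$ killed by $L_{s,i}$ and $H_{s-\frac12,i}$ simultaneously. Since infinitely many $i$ are involved, one must argue with finitely many $i$ at a time, or use that the $L_{s,i}$ span a space acting through finitely many operators on a finite-dimensional target. I would check this step first. Granting these results, the theorem follows by combining the two cases.
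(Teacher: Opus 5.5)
Your proposal is correct and matches the paper, which proves Theorem~\ref{thm:4.6} by combining Theorem~\ref{thm:4.5} with Theorem~\ref{thm:3.16}; using $\psi$ to justify the paper's ``without loss of generality'' reduction to $\bar{P}^+\neq 0$ is a sound addition. Two minor corrections: $c\in\mathbb{C}^*$ comes from Theorem~\ref{thm:2.4}, not Theorem~\ref{thm:3.2}; and your uniformity worry is settled by your first option, which is what the paper does: choose $w$ separately for each $i$, since $v_0$ is fixed and the threshold $2s+s_0$ does not depend on $i$, whereas the ``finitely many operators'' count does not close by itself because the number of independent restrictions $L_{s,i}|_{F''_{\lambda-s}}$ may grow with $\dim F''_{\lambda-s}$.
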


As an application of Theorem~\ref{thm:4.5} to the truncated mirror Heisenberg-Virasoro algebra, we have the following corollary.

\begin{coro}\label{cor:4.6}
Let $n\ge 2$ be an integer, and let $V$ be a simple Harish-Chandra module over the truncated mirror Heisenberg-Virasoro algebra $\mathcal{L}(n)$. Then $V$ is either a highest  weight module, a lowest weight  module, or a uniformly bounded module.
\end{coro}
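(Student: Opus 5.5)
The plan is to transfer Theorem~\ref{thm:4.5} from $\mathcal{L}$ to $\mathcal{L}(n)$ through the isomorphism of Proposition~\ref{lemm:2.2}. That isomorphism identifies $\mathcal{L}(n)$ with $\hat{\mathcal{L}}\otimes\bigl(\mathbb{C}[t^{\pm1}]/t^n\mathbb{C}[t^{\pm1}]\bigr)$, where, after the substitution $s=t+1$, we use the ideal $(t+1)^n\mathbb{C}[t^{\pm1}]$. Thus $\mathcal{L}(n)$ is a quotient of $\mathcal{L}$ by the ideal $\hat{\mathcal{L}}\otimes (t+1)^n\mathbb{C}[t^{\pm1}]$. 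The induced surjection $\pi\colon\mathcal{L}\to\mathcal{L}(n)$ maps $L_0\otimes 1$ to $L_0\otimes 1$ and respects the $\tfrac12\mathbb{Z}$-grading by $\operatorname{ad}(L_0)$. It also maps $\mathcal{L}_\pm$ onto $\mathcal{L}(n)_\pm$ and $\mathcal{L}_0$ onto $\mathcal{L}(n)_0$.

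Given a simple Harish-Chandra $\mathcal{L}(n)$-module $V$, I pull it back along $\pi$ to an $\mathcal{L}$-module $\pi^*V$. Since $\pi$ is surjective, $\pi^*V$ has the same submodules as $V$, so it is simple. Its weight spaces for $L_0$ are exactly those of $V$, so it is Harish-Chandra. Theorem~\ref{thm:4.5} then says that $\pi^*V$ is a highest weight module, a lowest weight module, or a uniformly bounded module.

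It remains to translate each alternative back to $V$. Uniform boundedness depends only on the weight spaces, so that case is immediate. If $v$ is a highest weight vector for $\mathcal{L}$, then $\mathcal{L}_+v=0$ gives $\mathcal{L}(n)_+v=\pi(\mathcal{L}_+)v=0$. Also $\mathcal{L}_0$ acts on $v$ by scalars and factors through $\mathcal{L}(n)_0$, so $v$ is a weight vector for $\mathcal{L}(n)_0$. Finally, $U(\mathcal{L})v=U(\mathcal{L}(n))v=V$. Hence $V$ is a highest weight $\mathcal{L}(n)$-module, and the lowest weight case is symmetric.

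The only point requiring care is confirming that the isomorphism of Proposition~\ref{lemm:2.2} intertwines the gradings. This holds because it is the identity on the factor $\hat{\mathcal{L}}$ and only changes the coordinate on the commutative factor, so $\pi$ is a graded surjection. I expect no genuine obstacle beyond this bookkeeping.

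For $n\geq2$ one could refine the uniformly bounded case using Theorem~\ref{thm:3.16} and Corollary~\ref{cor:3.14}: the only evaluation points compatible with $(t+1)^n=0$ are $\lambda=-1$. The corollary as stated, however, requires only the trichotomy above.
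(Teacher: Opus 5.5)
Your proposal is correct and follows the paper's route. The paper's one-line justification (``as an application of Theorem~\ref{thm:4.5}'') relies on Proposition~\ref{lemm:2.2} to realize $\mathcal{L}(n)$ as the quotient of $\mathcal{L}$ by $\hat{\mathcal{L}}\otimes(t+1)^n\mathbb{C}[t^{\pm1}]$, pulls a simple Harish-Chandra $\mathcal{L}(n)$-module back to $\mathcal{L}$, and applies Theorem~\ref{thm:4.5}. You also make explicit the descent of the grading and of the highest and lowest weight conditions along $\pi$, which the paper leaves unstated, and you correctly read the ideal in Proposition~\ref{lemm:2.2} as $(t+1)^n\mathbb{C}[t^{\pm1}]$ rather than the literal, unit-generated $t^n\mathbb{C}[t^{\pm1}]$.
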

\begin{rema}
    From Corollaries \ref{cor:3.14} and  \ref{cor:4.6}, we also provide a complete classification of simple Harish-Chandra modules over the truncated mirror Heisenberg-Virasoro algebra $\mathcal{L}(n)$ for $n\geq2$. 
\end{rema}

\section*{Acknowledgements}
This work was supported by the National Natural Science Foundation of China (Grant No.~12361005). Chen would like to thank Prof.~Xiangqian Guo for providing a proof of Proposition \ref{lemm:2.2}. Part of this work was conducted during the authors' visit to the Chern Institute of Mathematics, Tianjin, China, from July 19 to 28, 2026. The authors are grateful to the institute and Prof.~Chengming Bai for their warm hospitality and support.

\section*{Authors' contributions}
All authors contributed equally to this work.

\section*{Data Availability Statement}
This manuscript has no associated data.
\section*{Conflicts of Interest} The authors declare that they have no conflicts of interest regarding
this work.

\bigskip

Haibo Chen

\vspace{2pt}
School of Science, Jimei University, Xiamen, Fujian 361021, China

\vspace{2pt}
hypo1025@jmu.edu.cn

\bigskip

Xiansheng Dai

\vspace{2pt}
School of Mathematical Sciences, Guizhou Normal University,
Guiyang 550001,  China

\vspace{2pt}
daisheng158@126.com

\bigskip
Yucai Su

\vspace{2pt}
School of Science, Jimei University, Xiamen, Fujian 361021, China

\vspace{2pt}
yucaisu@jmu.edu.cn


\small \begin{thebibliography}{9999}\vskip0pt
\parindent=2ex\parskip=-1pt\baselineskip=-1pt





 



\bibitem{B1}  Y. Billig, Jet modules, {\it Canad. J. Math.}, {\bf 59} (2007), 712-729.

\bibitem{BK}  Y. Billig,  K. Iohara, Classification of  uniformly bounded  simple modules
over a lattice Lie algebra of Witt type, {\it Canad. J. Math.}, {\bf 73} (2021), 417-440.

\bibitem{BF1} Y. Billig, V. Futorny, Classification of simple $W_n$-modules with finite-dimensional
weight spaces,   {\it J. Reine Angew. Math.}, {\bf  720} (2016), 199-216.

\bibitem{BF2} Y. Billig, V. Futorny, Classification of uniformly bounded  simple modules for solenoidal Lie
algebras,   {\it Israel J. Math.}, {\bf 222} (2017),  109-123.

\bibitem{BFIK} Y. Billig, V. Futorny, K. Iohara, I. Kashuba,
Classification of simple strong Harish-Chandra $W(m,n)$-modules, arXiv:2006.05618.


\bibitem{BGLZ} P.  Batra,  X.  Guo,  R.  L\"{u}, K.  Zhao,  Highest weight modules over the pre-exp-polynomial algebras, {\it  J. Algebra}, {\bf 322}  (2009), 4163-4180.
 

\bibitem{CLW} Y. Cai, R. L\"{u}, Y. Wang, Classification of simple Harish-Chandra modules for
map (super)algebras related to the Virasoro algebra, {\it J. Algebra}, {\bf 570} (2021), 397-415.

\bibitem{C}  H. Chen, Harish-Chandra modules over the higher rank $W$-algebra $W(2,2)$, {\it J. Algebra}, {\bf 694} (2026), 427-447. 

\bibitem{DCL}  M. Dilxat, L. Chen, D. Liu, Classification of simple Harish-Chandra modules over the Ovsienko-Roger
superalgebra, {\it Proc. Roy. Soc. Edinburgh Sect. A}, {\bf  154} (2024),  483-493.
 
 

 

\bibitem{GLZ1} X. Guo, R. L\"{u}, K. Zhao, Simple Harish-Chandra modules, intermediate series
modules, and Verma modules over the loop-Virasoro algebra, {\it  Forum Math.},
{\bf 23} (2011), 1029-1052.

 

\bibitem{K}
  V.G. Kac, Superconformal algebras and transitive group actions on quadrics, {\it Commun. Math.
Phys.}, {\bf 186}, (1997) 233-252.

 

\bibitem{LPXZ} 
D. Liu, Y. Pei, L. Xia,   K. Zhao,
Irreducible modules over the mirror Heisenberg-Virasoro algebra, {\it Commun.  Contemp. Math.},  {\bf 24} (2022),  2150026.


 
\bibitem{LG} G. Liu, X. Guo, Harish-Chandra modules over generalized Heisenberg-Virasoro algebras, {\it Israel J. Math.}, {\bf 204}  (2014),   447-468.

 



 



 

\bibitem{M2} O. Mathieu, Classification of Harish-Chandra modules over the Virasoro Lie algebra,
{\it Invent. Math.}, {\bf 107} (1992), 225-234.

 
\bibitem{MP}
C. Martin, A. Piard, Classification of the indecomposable bounded admissible modules
over the Virasoro Lie algebra with weight spaces of dimension not exceeding two, {\it Commun.
Math. Phys.}, {\bf 150} (1992), 465-493.

 
\bibitem{SA} A. Savage, 
Classification of irreducible quasifinite modules over map Virasoro algebras, {\it  Transformation Groups}, {\bf  17} (2012)  547–570. 

\bibitem{S0} Y. Su,   A classification of indecomposable $\mathfrak{sl}_2$($\C$)-modules and a conjecture of Kac on irreducible  modules over the Virasoro algebra, {\it J. Algebra}, {\bf 161} (1993), 33-46.

 


 

\bibitem{XL}
Y. Xue, R. L\"{u}, Simple weight modules with finite-dimensional weight spaces over Witt superalgebras, {\it J. Algebra}, {\bf 74}
(2021), 92-116.


\bibitem{WLP} Q. Wu, D. Liu, Y. Pei, Classification of the simple Harish-Chandra modules over the loop Neveu-Schwarz algebra,
{\it Isr. J. Math.}, (2025). https://doi.org/10.1007/s11856-025-2829-8.

\end{thebibliography}
\end{document}